\def\beq{\begin{equation}}
\def\eeq{\end{equation}}
\def\beqn{\begin{equation}}
\def\eeqn{\end{equation}}
\begin{document}
\newcommand{\mb}{\mathbb}
\newcommand{\mf}{\mathfrak}
\newcommand{\mc}{\mathcal}
\newcommand{\im}{{\rm im}}
\newcommand{\del}{\partial}
\newcommand{\coker}{{\rm coker}}
\newcommand{\Tr}{\operatorname{Tr}}
\newcommand{\vol}{\operatorname{vol}}
\newcommand{\area}{\operatorname{area}}
\newcommand{\diag}{\text{diag}}
\newcommand{\f }{\mathbf }

\newcommand{\ZZ}{\mathbb{Z}}
\newcommand{\QQ}{\mathbb{Q}}
\newcommand{\RR}{\mathbb{R}}
\newcommand{\CC}{\mathbb{C}}
\newcommand{\EE}{\mathbb{E}}
\newcommand{\PP}{\mathbb{P}}
\newcommand{\NN}{\mathbb{N}}
\newcommand{\RNum}[1]{\uppercase\expandafter{\romannumeral #1\relax}}

\def\Gk{G^{(k)}}
\def\tilGk{\tilde{G}^{(k)}}
\def\sumk{\sum^{(k)}}
\def\sumi{\sum^{(i)}}
\def\ee{\mathbb{E}}
\def\T{\mathcal{T}}
\def\mfc{m_{\mathrm{fc}}}
\def\rfc{\rho_{\mathrm{fc}}}
\newcommand{\tr}{\operatorname{tr}}
\def\d{\mathrm{d}}
\def\i{\mathrm{i}}
\def\e{\mathrm{e}}
\def\eps{\varepsilon}
\def\DLeps{\mathcal{D}_{L, \varepsilon_0}}
\def\Hk{H^{(k)}}
\def\tilHk{\tilde{H}^{(k)}}
\def\mk{m^{(k)}_N}
\def\tilmk{\tilde{m}^{(k)}_N}
\def\Im{\mathrm{Im}}
\def\Xik{\Xi^{(k)}}
\def\1{\mathbbm{1}}
\def\G{\mathcal{G}}
\def\E{\mathcal{E}}
\def\tilbarGk{\bar{\tilde{G}^{(k)}}}
\def\P{\mathcal{P}}
\def\V{\mathcal{V}}
\def\tilh{\tilde{h}}
\def\bolde{\boldsymbol{e}}
\def\Id{\mathrm{Id}}
\def\rhosc{\rho_{\mathrm{sc}}}
\def\rhofc{\rho_{\mathrm{fc}}}
\def\thetat{\vartheta_t }
\def\nn{\mathbb{N}}
\def\F{\mathcal{F}}
\def\A{\mathcal{A}}
\def\D{\mathcal{D}}
\def\sc{\mathrm{sc}}
\def\delxi{\delta_\xi}
\def\pp{\mathbb{P}}
\def\zz{\mathbb{Z}}
\def\hatG{\hat{G}}
\def\hatdel{\hat{\del}}
\def\rr{\mathbb{R}}
\def\hatH{\hat{H}}
\def\fc{\mathrm{fc}}
\def\N{\mathcal{N}}
\def\hatA{\hat{A}}
\def\aq{\alpha_q}
\def\DLz{\mathcal{D}_L^{(0)}}
\def\DLo{\mathcal{D}_L^{(1)}}
\def\DLt{\mathcal{D}_L^{(2)}}
\def\mN{m_N}
\def\Re{\mathrm{Re}}

\theoremstyle{plain}
  \newtheorem{theorem}{Theorem}[section]
  \newtheorem{proposition}[theorem]{Proposition}
  \newtheorem*{propositionnonumber}{Proposition}
  \newtheorem{lemma}[theorem]{Lemma}
  \newtheorem{corollary}[theorem]{Corollary}
  \newtheorem{conjecture}[theorem]{Conjecture}
\theoremstyle{definition}
  \newtheorem{assumption}[theorem]{Assumption}
  \newtheorem{definition}[theorem]{Definition}
  \newtheorem{example}[theorem]{Example}
  \newtheorem{examples}[theorem]{Examples}
  \newtheorem{question}[theorem]{Question}
  \newtheorem{problem}[theorem]{Problem}
  \newtheorem{remark}[theorem]{Remark}
  
  \title{Graph Laplacian}
  \author{}

\begin{table}
\centering

\begin{tabular}{c}
\multicolumn{1}{c}{\Large{\bf Spectral statistics of sparse Erd\H{o}s-R\'enyi graph Laplacians}}\\
\\
\\
\end{tabular}
\begin{tabular}{c c c}
Jiaoyang Huang & & Benjamin Landon\\
\\
 \multicolumn{3}{c}{ \small{Department of Mathematics} } \\
 \multicolumn{3}{c}{ \small{Harvard University} } \\
\small{jiaoyang@math.harvard.edu} &  & \small{landon@math.harvard.edu}  \\
\\
\end{tabular}
\\
\begin{tabular}{c}
\multicolumn{1}{c}{\today}\\
\\
\end{tabular}

\begin{tabular}{p{15 cm}}
\small{{\bf Abstract:} We consider the bulk eigenvalue statistics of Laplacian matrices of large Erd\H{o}s-R\'enyi random graphs in the regime $p \geq N^{\delta}/N$ for any fixed $\delta >0$.  We prove a local law down to the optimal scale $\eta \gtrsim N^{-1}$ which implies that the eigenvectors are delocalized.   We consider the local eigenvalue statistics and prove that both the gap statistics and averaged correlation functions coincide with the GOE in the bulk.  }
\end{tabular}
\end{table}

\section{Introduction}

{\let\thefootnote\relax\footnote{The work of B.L. is partially supported by NSERC. }} In the late 1950s Erd\H{o}s and R\'enyi introduced what is perhaps the simplest random graph model, the now well-known Erd\H{o}s-R\'enyi random graph $\mathcal{G} (N, p)$. 
This model is a graph on $N$ vertices formed by choosing each (undirected) edge $(i, j)$ independently with probability $p$. 
We will say that the model is sparse when $p \to 0$ as $N \to \infty$ as each vertex has on average $pN \ll N$ edges.

Random graphs have numerous applications in the modeling of complex physical, biological and social systems.  A common method to understand the structure of the underlying graph 
is to associate to the graph a matrix, the most intensively studied being the adjacency and Laplacian matrices.
The spectral properties of these matrices carry a wealth of information about the underlying graph.  For example, the eigenvalues of the adjacency matrix reflect the topological features of the corresponding graphs, such as connectivity, while the eigenvalues and eigenvectors of the Laplacian matrix can be used to study random walks on the graph.  For a comprehensive introduction to the spectral properties of random graphs we refer the interested reader to \cite{CFSpectral}.  

Our main interest in this paper is the spectral properties of the Laplacian of the Erd\H{o}s-R\'enyi random graph.  There is an enormous literature on the study of the Laplacian of general random graphs and we do not attempt to give an exhaustive list of existing work, only mentioning some results on spectral properties related to ours.
The limiting spectral measure of the non-sparse Erd\H{o}s-R\'enyi Laplacian was first derived in \cite{BDJHankel}, and the sparse case was obtained in \cite{JTDilute}.  
Laws of large numbers of extreme eigenvalues of the Erd\H{o}s-R\'enyi Laplacian were proved in \cite{JTLowEv, DJLowEv} in the non-sparse case. For the spectral properties of the Laplacian matrices of more general random graphs, we refer the reader to, e.g., \cite{CRSpectralGeneral, LPNormBound,CLVPowerLaw, CLVExpectedD, COEigenvalue}.

Random matrix theory provides a natural framework for studying the spectral properties of random graphs.  
One of the most well-studied models is the Wigner ensemble consisting of matrices with iid entries, (up to the symmetry constraint $H=H^*$) introduced by Wigner \cite{wigner} to model the spectra of heavy ions.   The Wigner-Dyson-Gaudin-Mehta conjecture asserts that the local spectral statistics of Wigner matrices exhibit universality and are independent of the underlying distribution of the matrix entries, and instead depend only on the symmetry class (real symmetric or complex Hermitian) of the ensemble.  In particular, the local statistics coincide with the case in which the entries are Gaussians for which there are explicit formulas.  This conjecture has recently been solved for all symmetry classes in  \cite{EPRSYBulkUniv, ERSTVYBulkUniv, ESYRelaxation,  EYGap, EYYBulkGen, BEYYHomog}  (parallel results in certain cases were obtained in \cite{tao2010, tao2011}).  

The adjacency matrix of a non-sparse Erd\H{o}s-R\'enyi graph (i.e., $p$ fixed independent of $N$) essentially falls into the Wigner class and so the results  \cite{EPRSYBulkUniv, ERSTVYBulkUniv, ESYRelaxation,  EYGap, EYYBulkGen, BEYYHomog}  provide detailed information of the local spectral statistics of this ensemble.  The sparse case can be viewed as a singular Wigner ensemble, as the law of the entries is highly concentrated around $0$, and its analysis is much harder than the non-sparse case.  Universality of the local statistics of the sparse Erd\H{o}s-R\'enyi adjacency matrix was achieved in the regime $p \geq N^{2/3+\delta}/N$ in the works  \cite{EKYYSparse1, EKYYSparse2}; universality (of the bulk statistics) in the regime $p \geq N^{\delta}/N$ was only achieved quite recently in \cite{Sparse}.  

A key property of the Wigner ensemble is that all of the entries are independent (up to the symmetry constraint $H=H^*$).  In this regard, the new major challenge in the analysis of the Erd\H{o}s-R\'enyi Laplacian is the correlation between the entries, an obstacle which is present even in the non-sparse case.  To our knowledge, detailed information about local spectral statistics of (non-invariant) matrix ensembles with correlated entries has been limited to only a few cases.  A local law was proved for sparse $d$-regular graphs in \cite{DPDRegular} down to the scale $\eta \gtrsim \log(N)^{-1}$.  A local law down to the optimal scale was achieved for the $d$-regular random graph in the regime $d \gtrsim (\log N)^4$ in the work \cite{BKYDRegular} and universality of the local spectral statistics was subsequently obtained in the regime $d \gtrsim N^{\eps}$ in \cite{BHKYdregular}.  A local law was obtained for matrices having with correlated entries having a certain four-fold symmetry in \cite{altfourfold} and universality for Gaussian matrices with a translation invariant correlation structure was obtained in \cite{corgauss}.

Our first contribution is to prove a local law for the Erd\H{o}s-R\'enyi Laplacian down to the optimal scale.  More precisely we prove that the Stieltjes transform of the empirical eigenvalue distribution is well-approximated by the free convolution of the semicircle law and a standard Gaussian down to the scale $\eta \gtrsim N^{-1}$.  This implies a rigidity estimate for any fixed fraction of the non-extremal eigenvalues and delocalization of the corresponding eigenvectors.  Our proof of the local law is inspired by methods developed for symmetric matrices with uncorrelated entries \cite{ESYSemicircle, ESYLocalLaw, EYYrig, EKYYSparse1,EKYYgeneral, LSdeformW}.  However, these proofs rely on the fact that the $i$th row and column of the matrix is independent of $i$th minor.  This difficulty was overcome in  the work \cite{BKYDRegular} on the $d$-regular graph using methods exploiting the specific structure of the underlying random graph,  and it is not clear that these methods would be applicable to the general class of models we will consider in this paper.  We instead use the fact that the correlation between the matrix entries is quite weak and implement a high-order resolvent expansion in order to estimate high moments of certain error terms arising in the proof.  We introduce a graphical notation to keep track of terms appearing in our estimates inspired by the methods of \cite{iso2014}.  Our expansion shows that the correlation between the $i$th minor and the $i$th row can be ignored.

The solution of the Wigner-Dyson-Gaudin-Mehta conjecture saw the development of a general three-step strategy for proving bulk universality for random matrix ensembles.  The first step is to prove a local law controlling the eigenvalue density down to the optimal scale.  The second step concerns the proof of the optimal time to local equilibrium of Dyson Brownian motion (DBM), and the third step is either a comparison or perturbative argument comparing the random matrix ensemble to DBM.  Our result discussed above completes the first step.

In the second step we introduce a modification of the DBM, defining a matrix process that preserves the structure of the Laplacian matrix.  Note that the Laplacian matrix has a trivial eigenvalue corresponding to the constant eigenvector.  Once we restrict the matrix process to the space orthogonal to this eigenvector, we get an ensemble that is well approximated by the sum of a diagonal matrix and a GOE matrix.  Universality for this process then follows from the recent work \cite{DBM}, where the optimal time to local equilibrium was established for a wide class of initial data; we remark that the result \cite{DBM} is required to deal with the sparse ensembles considered here, and previous approaches along the lines of \cite{ESYRelaxation, LSSYdeformWB} are insufficient (see \cite{ES} for related results on DBM with general initial data).  In order to complete the third step we rely on the observation in \cite{BoYa} that the change of the eigenvalues is negligible for a short time if one views DBM as a matrix process.   As a result we obtain bulk universality for the Erd\H{o}s-R\'enyi Laplacian in the sense of gap universality and averaged correlation functions in the regime $p \geq N^{\delta}/N$.  

Our analysis applies to a wider class of random matrices that includes the Erd\H{o}s-R\'enyi Laplacian as a specific example.  In Section \ref{defmodel} we introduce this model which we call the random Laplacian-type matrix and state our main results.  In Section \ref{locallawsec} we prove the local law for random Laplacian-type matrices.  In Section \ref{univsec} we prove universality of the local statistics.

\vspace{5 pt}
\noindent{\bf Acknowledgements.} Part of the work of B.L. was conducted during a stay at the Erwin Schr\"odinger Institute. B.L. would like to thank ESI for their hospitality.  The authors are grateful to Phillippe Sosoe for helpful comments on a draft of this paper.

\section{Definition of model and main results}\label{defmodel}

\subsection{Definition of model}

In this section we introduce the class of random matrices under consideration which we call random Laplacian-type matrices. The motivating example is the graph Laplacian of the Erd\H{o}s-R\'enyi random graph. 
The Erd\H{o}s-R\'enyi  graph on $N+1$ vertices is constructed by choosing each undirected edge $(i, j)$, $i \neq j$ independently with probability $p$ (for reasons that will become clear later it is more natural to consider a graph on $N+1$ vertices instead of $N$ vertices.  As a result our matrices will be of size $N+1 \times N+1$).  It will be convenient to introduce the {\bf sparsity parameter} $q$ by
\beqn
p := \frac{q^2}{N}
\eeqn
We denote the Laplacian of the Erd\H{o}s-R\'enyi random graph by $M= (M_{ij})_{ij}$. The off-diagonal entries are given by
\beqn
M_{ij} = m_{ij}, \qquad  1\leq i \neq j\leq N+1
\eeqn
where the $m_{ij}$ are independent Bernoulli random variables (up to the symmetry constraint $m_{ij} = m_{ji}$) satisfying
\beqn
\PP\left(m_{ij}=1\right)=\frac{q^2}{N},\qquad \PP(m_{ij} = 0)=1-\frac{q^2}{N}.
\eeqn
The diagonal entries of $M$ are given by
\beqn
 M_{ii} = - \sum_{1\leq k\leq N+1, k \neq i} m_{ki}.
\eeqn
Note that on average each row has $pN = q^2$ nonzero elements and so the matrix is sparse in the regime $q = o(N^{1/2})$. In order to study the spectral statistics of $M$ we will extract the mean and introduce a rescaling so that the typical eigenvalue spacing in the bulk is of the order $N^{-1}$.  We write
\begin{align} \label{eqn:mean}
M = q(1-q^2/N)^{1/2}H + q^2 \f e \f e^* - q^2(N+1) I_{N+1}.
\end{align}
Here, we have introduced the $(N+1)\times 1$ column vector
\beq\label{theev}
\f e =\left((N+1)^{-1/2}, (N+1)^{-1/2},\cdots, (N+1)^{-1/2}\right)^*.
\eeq
The matrix $H = (H_{ij})$ is of the same form as $M$,
\beqn
H_{ij} = h_{ij}, \quad 1\leq i \neq j\leq N+1, \qquad H_{ii} = -\sum_{1\leq k\leq N+1, k \neq i} h_{ki},
\eeqn
where the $h_{ij}$ are independent random variables (up to the symmetry constraint $h_{ij} = h_{ji}$) defined by
\beqn
h_{ij}:=\frac{m_{ij}}{q ( 1 - q^2/N)^{1/2}}-\frac{q}{N ( 1 - q^2/N)^{1/2}}, \qquad 1\leq i \neq j\leq N+1.
\eeqn
Note that the entries of $H$ are centered and for any indices $1\leq i\neq j \leq N+1$, the off-diagonal element $h_{ij}$ satisfies
\beqn
\ee [ |h_{ij} |^2 ] = \frac{1}{N}, \qquad \ee [ |h_{ij} |^k] \leq \frac{1}{N q^{k-2}}, \qquad k \geq 3.
\eeqn
\begin{remark}
Note that since $\f e$ is an eigenvector of $H$, all spectral properties of $M$ can be deduced from $H$ as defined in \eqref{eqn:mean}.  We will therefore state and prove results only for $H$. Analogous statements for $M$ follow trivially.
\end{remark}
The above discussion leads us to the following definition of random Laplacian-type matrices.
\begin{definition}\label{def:H} A real symmetric matrix $H = (H_{ij} )_{ij}$ is called a {\bf random Laplacian-type matrix} if it is of the form
\begin{align}\label{e:LM}
H=\left(
\begin{array}{cccc}
-\sum_{j\neq1}h_{1j} & h_{12} &\cdots & h_{1{N+1}}\\
h_{21} & -\sum_{j\neq 2}h_{2j} & \cdots &h_{2{N+1}}\\
\vdots & \vdots & \vdots & \vdots\\
h_{{N+1}1} & h_{N+12} & \cdots & -\sum_{j\neq N+1}h_{N+1j}
\end{array}
\right)
\end{align}
where $h_{ij}$ are independent random variables (up to the symmetry constraint $h_{ij} = h_{ji}$) with mean zero and variance $1/N$. We assume that they satisfy the moment conditions: for any $p\geq 3$, there exists some constant $c_p$ such that 
\begin{align}\label{asup:moment}
\EE[|h_{ij}|^p]\leq \frac{c_p}{q^{p-2}N},
\end{align}
where the {\bf sparsity parameter} $q$ satisfies
\begin{equation}\label{defq}
N^{\beta}\leq q\leq N^{1/2}
\end{equation}
for some $\beta >0$.
\end{definition}

\remark We state and prove our results only in the real symmetric case, however our results and proofs carry over to the complex Hermitian case with only minor notational changes.  

\subsection{Free convolution}

The results of \cite{BDJHankel, JTDilute} state that the empirical eigenvalue distribution of the random Laplacian-type matrix converges weakly to the free convolution of the semicircle law and standard Gaussian, which we define in this section.
We denote this free convolution by $\rhofc$,
\beq \label{eqn:freeconv}
\rhofc = \rhosc \boxplus \rho_G.
\eeq
Above, $\rhosc$ and $\rho_G$ are the semicircle and standard Gaussian distributions, respectively, and are given by 
\beqn
\rhosc (E) = \frac{1}{2 \pi} \sqrt{ ( 4 - E^2)_+ },\qquad 
\rho_G (E) = \frac{1}{ \sqrt{2 \pi } } \e^{- E^2/2}.
\eeqn
The precise definition of \eqref{eqn:freeconv} is through its Stieltjes transform which we denote by $\mfc$.  The Stieltjes transform $\mfc$ is defined as the solution to the following functional equation on the upper half complex plane $\Im[z]>0$,
\begin{equation}\label{e:mfc}
\mfc (z) = \int \frac{\rho_G(x)}{ x - z - \mfc (z)} \d x.
\end{equation}
 It is well known \cite{Biane} that there is a unique solution satisfying $\Im[\mfc(z)]\geq 0$ and $\mfc(z)\sim z^{-1}$ as $z\rightarrow \infty$, and its properties are well-studied.  In particular, $\mfc$ is the Stieltjes transform of a measure with density $\rhofc$ that is analytic and can be recovered by the Stieltjes inversion formula
\beqn
\rhofc (E) = \lim_{\eta \downarrow 0} \frac{1}{ \pi} \Im [ \mfc (E + \i \eta ) ].
\eeqn

We denote the classical eigenvalues of $\rhofc$ and $\rhosc$ by $\gamma_i$ and  $\gamma_{i,sc}$ respectively.  They are defined by the equations
\begin{align}\label{e:classicallocal}
\int_{-\infty}^{\gamma_i}\rhofc(x)dx=\frac{i-\frac{1}{2}}{N},\qquad
\int_{-\infty}^{\gamma_{i,sc}}\rhosc(x)dx=\frac{i-\frac{1}{2}}{N}. 
\end{align}

\subsection{Main results}
A random Laplacian-type matrix $H$ has $N+1$ eigenvalues, one of which is always $0$ with eigenvector $\f e$.  We call this the trivial eigenvalue.  We denote the remaining $N$ nontrivial eigenvales of $H$ as $\lambda_1(H)\leq \lambda_2(H)\leq \cdots \leq \lambda_{N}(H)$ and corresponding eigenvectors as  $u_1(H), u_2(H),\cdots ,u_{N}(H)$. We will sometimes write them as $\lambda_i$ and $u_i$ for simplicity of notation.  


In this paper we are interested in the properties of the ``bulk'' eigenvalues of $H$.  We fix a large constant $L>0$ and consider eigenvalues in the interval $[-L, L]$.  We fix the parameter 
\beq\label{e:defineXi}
\xi :=N^{\nu}
\eeq
where $\nu>0$ is a small parameter.  The parameter $\nu >0$ can be arbitrarily small, but in proofs its value may change from line to line; this will only occur at finitely many steps and we will not track its value explicitly.

 We define the spectral domain $\D_L=\DLz\cup \DLo\cup \DLt$, where
\begin{align}
\DLz=&\{ z =E+\i\eta : |E| \leq 2\xi+L, 2 \leq \eta \leq L \}, \notag \\
\DLo=&\{ z = E + \i \eta : |E| \leq L, \xi^3N^{-1} \leq \eta \leq 2 \}, \notag \\
\DLt=&\{ z = E + \i \eta : 2\xi \leq |E| \leq 2\xi + L ,  \xi^3N^{-1} \leq \eta \leq 2 \}.
\end{align}
On the domain $\DLo$ we will prove a local law down to the optimal scale.  In order to obtain information on the eigenvalue locations in the interval $[-L, L]$ we will also require estimates on the domains $\DLz$ and $\DLt$.  

The spectral parameter $z$ will play a fundamental role in this paper.  We will write it as the sum of its real and imaginary parts as
\beqn
z = E + \i \eta, \qquad E = \Re[z],\quad \eta = \Im [z]
\eeqn
We define the control parameter
\beq\label{controlPsi}
\Psi (z) = \frac{1}{q} + \frac{1}{ ( N \eta )^{1/2} }.
\eeq
On the domain $\D_L$ we always have 
\beqn
\xi\Psi\leq \log (N)^{-1}
\eeqn
 for $N$ large enough.

Throughout the paper we use the following notion of {\bf overwhelming probability}.
\begin{definition}  We say that a family of events $\F (u)$ indexed by some parameter(s) $u$ holds with overwhelming probability if for any large $D>0$ we have for all $N$ large enough,
\beqn
\pp[ \F (u) ] \geq 1 - N^{-D},
\eeqn
 uniformly in $u$.

Given two families of events $\F_1 (u)$ and $\F_2 (u)$ we say that $\F_2$ holds with overwhelming probability on $\F_1$ if for any large $D>0$, we have for all $N$ large enough, $\pp[ \F_2 (u) ^c \cap \F_1 (u) ] \leq N^{-D}$, uniformly in $u$.

Often we will take $u$ to lie in the spectral domain $\D_L$ or the indices $[[1, N+1]]$ or some combination thereof.  Unless it is mentioned otherwise, the notion of overwhelming probability will always be uniform in the parameters under consideration.
\end{definition}

We denote the resolvent of the matrix $H$ by
\beqn
G (z) := (H-z)^{-1},
\eeqn
and the Stieltjes transform of the empirical eigenvalue distribution of $H$ by
\beqn
\mN(z):=\frac{1}{N}\Tr G(z).
\eeqn
Sometimes we will omit the parameter $z$ and simply denote them by $G$ and $\mN$, if the context is clear. 

Now we can state the main results of this paper.
\begin{theorem}[Local Law]\label{t:lclaw} Let $H$ be a random Laplacian-type matrix as in Definition \ref{def:H} and let $G(z)$ be its resolvent.  There is a constant $C>0$ such that with overwhelming probability, 
\begin{align}\label{e:locallaw}
 |\mN(z)-\mfc(z)| \leq C\xi\Psi 
\end{align}
and
\begin{align}\label{e:etb}
 \left| G_{ij} (z) - \delta_{ij} \frac{1}{ \sum_{k \neq i} h_{ik} - z - \mfc (z) } \right|  \leq  C\xi \Psi 
\end{align}
uniformly for $z\in \D_L$.
\end{theorem}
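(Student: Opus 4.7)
The plan is to follow the standard Schur complement approach to derive a self-consistent equation for $\mN(z)$, modified to accommodate the weak correlation between the $i$-th row of $H$ and the minor $\Hk$ obtained by deleting the $i$-th row and column. The starting point is the Schur formula
\beqn
G_{ii} = \frac{1}{H_{ii} - z - \sum_{j,k \neq i} h_{ij}\, G^{(i)}_{jk}\, h_{ik}}.
\eeqn
Two observations drive the analysis. First, $H_{ii} = -\sum_{j \neq i} h_{ij}$ is a sum of independent mean-zero random variables with total variance $1$ and uniformly controlled higher moments by \eqref{asup:moment}, so it is asymptotically a standard Gaussian; this is precisely the source of the Gaussian factor in $\rhofc = \rhosc \boxplus \rho_G$. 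Second, in contrast to the Wigner case, $\Hk$ is \emph{not} independent of $\{h_{ij}\}_{j \neq i}$, because the diagonal entries $(\Hk)_{jj} = -\sum_{\ell \neq j} h_{j\ell}$ contain the term $h_{ji}$.

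To circumvent this correlation, I would introduce a modified minor $\tilHk$ defined by $(\tilHk)_{jk} = (\Hk)_{jk}$ for $j \neq k$ and $(\tilHk)_{jj} = -\sum_{\ell \neq j,\, \ell \neq i} h_{j\ell}$; the resolvent $\tilGk$ is then independent of the $i$-th row. The perturbation $V^{(i)} := \Hk - \tilHk$ is diagonal with entries $-h_{ji}$ of typical size $N^{-1/2}$, so iterating the resolvent identity
\beqn
\Gk = \tilGk - \tilGk V^{(i)} \tilGk + \tilGk V^{(i)} \tilGk V^{(i)} \tilGk - \cdots
\eeqn
to sufficiently high order expresses $\Gk$ as a sum of products of $h_{ji}$'s contracted with entries of $\tilGk$. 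Substituting into the quadratic form $\sum_{j,k} h_{ij} G^{(i)}_{jk} h_{ik}$ and taking moments of order $\log N$, one may exploit the independence of $\{h_{ij}\}_{j \neq i}$ from $\tilGk$ to estimate all the resulting contractions, using the graphical bookkeeping of \cite{iso2014} to organize them. The main obstacle is exactly this combinatorial step: one must verify that every term produced by the expansion, after contraction against the $i$-th row, contributes an error of size $O(\xi \Psi)$, which is sensitive to the sparsity parameter $q$ and requires careful tracking of how many factors of $N^{-1/2}$ or $q^{-1}$ each term carries.

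Once the replacement $\Gk \rightsquigarrow \tilGk$ is justified, standard large deviation estimates for quadratic forms in independent variables yield $\sum_{j,k \neq i} h_{ij} \tilGk_{jk} h_{ik} = N^{-1}\Tr \tilGk + O(\xi\Psi) = \mN(z) + O(\xi\Psi)$ with overwhelming probability, giving the approximate identity $G_{ii} \approx (H_{ii} - z - \mN(z))^{-1}$. Averaging over $i$ and using that the empirical distribution of $\{H_{ii}\}$ concentrates around $\rho_G$ produces the approximate self-consistent equation
\beqn
\mN(z) \approx \int \frac{\rho_G(x)}{x - z - \mN(z)}\,\d x,
\eeqn
matching \eqref{e:mfc}. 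A standard stability analysis of this functional equation at the solution $\mfc(z)$ converts the approximate equation into the quantitative bound \eqref{e:locallaw}, and the entrywise bound \eqref{e:etb} then follows on the diagonal from $G_{ii} \approx (H_{ii} - z - \mfc(z))^{-1}$ and on the off-diagonal from analogous Schur-type moment estimates on $G_{ij}$. The argument is closed by a bootstrap in $\eta$: one first establishes the estimates for $\eta \sim 1$ on $\DLz$ where crude bounds suffice, then iteratively descends in $\eta$ on $\DLo$ and $\DLt$, feeding the bound at each scale into the moment estimates to upgrade the bound at a slightly smaller scale, all the way down to $\eta \gtrsim \xi^3/N$.
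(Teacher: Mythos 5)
Your proposal takes essentially the same approach as the paper: the modified minor $\tilHk$ independent of the removed row, the high-order resolvent expansion to replace $\Gk$ by $\tilGk$, the graphical bookkeeping of \cite{iso2014} for the high moment estimates, the self-consistent equation via Schur, the stability analysis, and the bootstrap in $\eta$ from $\DLz$ down through $\DLo$ and $\DLt$. One step you state too casually is the concentration of $\frac{1}{N}\sum_k (H_{kk}-z-\mfc)^{-1}$ around $\mfc(z)$: because the diagonal entries $H_{kk}=-\sum_{j\neq k} h_{jk}$ are \emph{not} independent across $k$ (each $h_{jk}$ appears in two of them), it is not immediate that their empirical distribution concentrates; the paper instead applies McDiarmid's inequality to the functional $F(\{h_{ij}\})=\frac{1}{N}\sum_k (g_k)^{-1}$ after truncation, exploiting that each $h_{ij}$ enters only two of the $g_k$'s, and then a Lindeberg swap to compute $\EE[F]$, which is the content of the paper's Proposition \ref{sum}.
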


Theorem \ref{t:lclaw} implies the following rigidity estimates on the eigenvalues and complete delocalization of eigenvectors.
\begin{corollary}[Rigidity of Eigenvalues] \label{cor:rig} Let $H$ be a random Laplacian-type matrix as in Definition \ref{def:H}. 
Fix a small $\kappa >0$.  With overwhelming probability,
\beqn
|\lambda_i(H)-\gamma_i| \leq C \xi^2 q^{-1},
\eeqn
uniformly for indices $i\in [[\kappa N, (1-\kappa)N]]$.
\end{corollary}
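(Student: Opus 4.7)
The plan is to extract rigidity from the local law in the standard two-step manner: (i) convert the bound \eqref{e:locallaw} on $|\mN - \mfc|$ into a bound on the difference of the eigenvalue counting functions
\begin{equation*}
\mathfrak{n}(E) := \frac{1}{N} \#\{i : \lambda_i(H) \leq E\}, \qquad n_{\fc}(E) := \int_{-\infty}^E \rhofc(x) \d x,
\end{equation*}
and (ii) invert this bound to obtain control of individual eigenvalue locations, using regularity of $\rhofc$ in the bulk.

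For step (i), I would apply a Helffer--Sj\"ostrand representation. Fix $E\in [-L,L]$, let $\chi_E$ be a smooth cutoff approximating $\mathbf 1_{(-\infty,E]}$ with transition of width $\eta_* = \xi^3/N$, and let $\tilde{\chi}_E$ be a standard almost-analytic extension whose $\partial_{\bar z}$-support is contained in the spectral domain $\D_L$. Then
\begin{equation*}
\mathfrak{n}(E) - n_{\fc}(E) = \frac{1}{\pi}\iint \partial_{\bar z}\tilde{\chi}_E(z)\,[\mN(z) - \mfc(z)] \, \d x\, \d y + O(\eta_*),
\end{equation*}
with the $O(\eta_*)$ term accounting for the smoothing error. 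Inserting \eqref{e:locallaw} with $\xi\Psi(z) \leq \xi/q + \xi/\sqrt{N\eta}$ and integrating along the contour (whose three natural pieces lie respectively in $\DLo$, $\DLt$ and $\DLz$), the right-hand side is controlled by $C\xi^2/q$ after absorbing a logarithmic factor into one of the $\xi$'s. The three-piece structure of $\D_L$ is designed precisely to cover this contour: $\DLo$ controls the region near the real axis on $|E|\leq L$, $\DLt$ covers the near-axis sides at $|E|$ slightly above $L$, and $\DLz$ covers the macroscopic top.

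For step (ii), since $\rhofc = \rhosc \boxplus \rho_G$ and the Gaussian component has full support, $\rhofc$ is analytic and strictly positive on all of $\RR$, hence bounded below by some $c_\kappa > 0$ on the compact interval $[\gamma_{\kappa N}, \gamma_{(1-\kappa) N}]$. For $i \in [[\kappa N,(1-\kappa)N]]$, monotone inversion of $n_{\fc}$ then yields
\begin{equation*}
|\lambda_i - \gamma_i| \leq c_\kappa^{-1}|\mathfrak{n}(\lambda_i) - n_{\fc}(\lambda_i)| \leq C\xi^2/q.
\end{equation*}

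The main obstacle is the tail of the empirical spectrum: eigenvalues $\lambda_i$ outside $[-L,L]$ may still be counted by $\mathfrak{n}(E)$ for $E \in [-L,L]$, yet the local law is only available on $\D_L$. This is precisely what the macroscopic regime $\DLz$ is for: the local law at $\eta \asymp 1$ controls $\frac{1}{N}\sum_i \eta/((\lambda_i - E)^2 + \eta^2)$, which after integration bounds the number of eigenvalues outside a large enough interval. Once this tail contribution is bounded with overwhelming probability, the Helffer--Sj\"ostrand calculation proceeds as outlined, following the template of \cite{EYYrig,EKYYSparse1}.
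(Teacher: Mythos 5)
Your overall strategy---Helffer--Sj\"ostrand to convert \eqref{e:locallaw} into a counting function estimate, followed by monotone inversion using $\rhofc \geq c_\kappa$ in the bulk---is exactly the paper's approach (the paper outsources the Helffer--Sj\"ostrand computation to \cite[Lemma 7.17]{DBM} with $E_1=-2\xi$, $E_2 \in (-L/2, L/2)$, and then inverts via \cite[Theorem 2.2]{EYYrig}).

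The genuine gap is in your tail control. You propose to bound the number of eigenvalues outside a large interval using the macroscopic local law at $\eta\asymp 1$, but this is quantitatively too weak. At $\eta\asymp 1$ the local law gives $|\mN - \mfc| \lesssim \xi/q$, and an eigenvalue at distance $M$ from the real axis contributes roughly $\frac{1}{N(M^2+1)}$ to $\Im \mN$. From this you can only conclude that the number of eigenvalues with $|\lambda_i|\geq M$ is $O(NM^2(1/M + \xi/q))$, which is $O(N)$ --- a trivial bound. One cannot extract the needed conclusion (that at most $O(N\xi^2/q)$ eigenvalues, and in fact \emph{none}, lie outside the Helffer--Sj\"ostrand contour) from the resolvent at macroscopic scale. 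The paper instead uses the stronger fact that the spectrum of $H$ is contained in $[-2\xi,2\xi]$ with overwhelming probability, so that $\mathfrak{n}(-2\xi)=0$ outright; this is a spectral norm bound and does not follow from the local law on $\DLz$. You would also need to handle the Gaussian tail of $\rhofc$ (i.e., $\int_{-\infty}^{-2\xi}\rhofc \leq N^{-D}$), which you have left implicit; the paper proves this by an explicit decay estimate on $\rhofc(E)$ for large $|E|$ obtained from the self-consistent equation \eqref{e:mfc}.

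A secondary remark: your step (ii) refers to $\rhofc$ being bounded below on the compact interval $[\gamma_{\kappa N}, \gamma_{(1-\kappa)N}]$. This is correct in spirit, but you should be a little careful that this interval lies in $(-L/2,L/2)$ for the chosen $L$; the paper fixes $L$ large and obtains the rigidity only for $\gamma_i \in (-L/2,L/2)$ (equivalently $i\in[[\kappa N,(1-\kappa)N]]$ for appropriate $\kappa$), which is exactly what the statement claims. Otherwise your conclusion is sound.
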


\begin{corollary}[Delocalization of Eigenvectors]\label{cor:del}
Let $H$ be a random Laplacian-type matrix as in Definition \ref{def:H}. There is a constant $C>0$ so that the following holds.  Fix a small $\kappa>0$.  With overwhelming probability we have
\beqn
\|u_i(H)\|^2_\infty \leq C \xi^3 N^{-1},
\eeqn
uniformly for indices $i\in [[\kappa N, (1-\kappa)N]]$.
\end{corollary}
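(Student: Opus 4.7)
The plan is to derive the delocalization bound directly from the entry-wise local law \eqref{e:etb} via the standard Stieltjes transform identity, combined with the rigidity bound of Corollary \ref{cor:rig}. For any $k$ and spectral parameter $z = E + \i \eta$, the spectral decomposition of $G(z)$ gives
\[
\Im [ G_{kk}(z) ] = \sum_{\alpha=1}^N \frac{\eta \, |u_\alpha(k)|^2}{ ( \lambda_\alpha - E)^2 + \eta^2 } + \frac{ \eta \, |\f e(k)|^2}{ E^2 + \eta^2 }.
\]
Retaining only the $\alpha = i$ summand yields the one-sided inequality
\[
|u_i(k)|^2 \leq \eta \, \Im [ G_{kk}( \lambda_i + \i \eta ) ].
\]

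Next, I would bound $\Im[G_{kk}]$ from above using Theorem \ref{t:lclaw}. For any fixed small $\kappa>0$ I pick $L$ large enough that $\gamma_i$ lies in the interior of the support of $\rhofc$ by at least a distance $\tau(\kappa) > 0$ for all $i \in [[\kappa N, (1-\kappa)N]]$. Then $\Im[\mfc(E+\i\eta)] \geq c(\kappa) > 0$ uniformly for such $E$, so that $|\sum_{j \neq k} h_{kj} - z - \mfc(z)|^{-1} \leq c(\kappa)^{-1}$ with overwhelming probability, and the entry-wise local law \eqref{e:etb} gives
\[
|G_{kk}(z)| \leq C(\kappa),
\]
with overwhelming probability uniformly in $k$ and in $z \in \DLo$ with $E$ in the specified bulk interval (here I also use $\xi \Psi \leq \log(N)^{-1}$).

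The third step is to set $\eta = \xi^3 / N$, so that $\lambda_i + \i \eta \in \DLo$ once we know $|\lambda_i| \leq L/2$, which follows from Corollary \ref{cor:rig} and $|\gamma_i| \leq L/4$. Combining the two bounds then produces
\[
|u_i(k)|^2 \leq \eta \cdot C(\kappa) = C(\kappa) \, \xi^3 N^{-1}
\]
with overwhelming probability. A union bound over the $(N+1) \cdot (N+1-2\kappa N)$ pairs $(i,k)$ (both polynomial in $N$) turns this into a uniform statement, giving $\|u_i\|_\infty^2 \leq C \xi^3 N^{-1}$ as desired.

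The only mild subtlety is applying the local law at the random spectral parameter $z = \lambda_i + \i \eta$. This causes no difficulty because Theorem \ref{t:lclaw} is already stated as an overwhelming-probability event holding uniformly in $z \in \D_L$; alternatively one could discretize $\D_L$ on an $N^{-10}$-grid and invoke the Lipschitz continuity $\|\partial_z G\| \leq \eta^{-2}$ to pass to arbitrary $z$. Since all other steps are routine, this is really a one-line corollary of the local law; there is no serious obstacle.
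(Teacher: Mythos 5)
Your argument is correct and is the standard delocalization-from-local-law proof, which is exactly what the paper intends: the paper omits the proof and cites \cite[Theorem 2.16]{EKYYSparse2}, and your derivation (spectral decomposition of $\Im G_{kk}$, retain the $\alpha=i$ term at $z=\lambda_i + \i\xi^3 N^{-1}$, bound $|G_{kk}|$ via \eqref{e:etb} and $\Im[\mfc]\geq c_*$, locate $\lambda_i$ via rigidity, union bound) is precisely the argument in that reference. One cosmetic remark: you need not invoke ``$\gamma_i$ lies in the interior of the support of $\rhofc$ by at least $\tau(\kappa)$'' — $\rhofc$ is supported on all of $\RR$ and Lemma \ref{l:reg} already gives $\Im[\mfc]\geq c_*$ uniformly on $\DLo$ for any fixed $L$; what you actually need, and correctly supply, is $L$ large enough that $\gamma_i\in[-L/4,L/4]$ for all $i\in[[\kappa N,(1-\kappa)N]]$.
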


\begin{remark}
In this note, we focus on the bulk eigenvalues of the random Laplacian-type matrix. As for the behavior of extreme eigenvalues, we refer the interested reader to the papers \cite{DJLowEv, JTLowEv}, where it is proved that for the non-sparse random Laplacian-type matrix (i.e. $q=N^{1/2}$) that for any fixed index $i$,
\beqn
\frac{\lambda_i(H)}{\sqrt{2N\log N}}\longrightarrow 1,
\eeqn
almost surely.  The eigenvectors corresponding to those extreme eigenvalues can be studied in the Gaussian case \eqref{e:GLM} by finite rank perturbation theory \cite[Theorem 2.2]{BN}. 
We cannot expect delocalization results such as Corollary \ref{cor:del}.
\end{remark}

Finally, we have the following result on the bulk universality of the eigenvalue statistics.  To state it we define the $n$-point correlation functions.  Denote by
\beq
p_H ( \lambda_1, \cdots \lambda_N )
\eeq
the unordered eigenvalue density of the matrix $H$.  The $n$-point correlation function is defined by
\beq
p^{(n)}_H ( \lambda_1, \cdots \lambda_n ) = \int p_H ( \lambda_1, \cdots \lambda_N ) \d \lambda_{n+1} \cdots \d \lambda_N.
\eeq
We denote the corresponding quantities for the GOE by $p_{GOE}^{(n)}$ and $p_{GOE}$.
\begin{theorem}\label{t:uni}
Let $H$ be a random Laplacian-type matrix as in Definition \ref{def:H}.  Then the nontrivial eigenvalues of $H$ exhibit universality in the following two forms. Firstly, the nontrivial eigenvalues of $H$ have gap universality. For any $\kappa>0$ and index $i\in [[\kappa N, (1-\kappa)N]]$
\beqn
\begin{split}
\lim_{N\rightarrow \infty}&\EE^{(H)}[ O(N \rfc(\gamma_i)(\lambda_i-\lambda_{i+1}), \cdots, N\rfc(\gamma_i)(\lambda_i -\lambda_{i+n}))]\\
-&\EE^{(GOE)}[ O(N \rho_{sc}(\gamma_{i, \sc})(\lambda_i-\lambda_{i+1}), \cdots, N\rho_{sc}(\gamma_{i, \sc})(\lambda_i -\lambda_{i+n}))]=0,
\end{split}
\eeqn
for test functions $O \in C_0^\infty ( \rr^n)$, where $\gamma_{i,\sc}$'s are the classical eigenvalue locations with respect to semi-circle law $\rho_{sc}$.

Secondly, the averaged $n$-point correlation functions of the nontrivial eigenvalues of $H$ are universal. For any $\delta >0$, $E\in [-L,L]$,  and $b\geq N^{-1+\delta}$ we have
\beqn
\begin{split}
\lim_{N\rightarrow \infty}\int_{E-b}^{E+b} \int_{\RR^n} O(\alpha_1,\cdots, \alpha_n)\left\{ \frac{1}{\rfc(E)^n}\rho^{(n)}_{H}\left( E'+\frac{\alpha_1}{N\rfc(E)},\cdots E'+\frac{\alpha_n}{N\rfc(E)}\right)\right.\\
-\left.\frac{1}{\rho_{sc}(E)^n}\rho^{(n)}_{GOE}\left( E'+\frac{\alpha_1}{N\rho_{sc}(E)},\cdots E'+\frac{\alpha_n}{N\rho_{sc}(E)}\right)\right\} \d \alpha_1 ... \d \alpha_n \frac{\d E'}{2b}=0.
\end{split}
\eeqn
where the test function $O\in C^{\infty}_{c}(\RR^n)$.
\end{theorem}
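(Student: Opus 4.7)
The plan is to execute the standard three-step strategy outlined in the introduction. Step one is already supplied by Theorem \ref{t:lclaw} and Corollaries \ref{cor:rig} and \ref{cor:del}. For step two I would introduce a matrix Ornstein--Uhlenbeck flow that preserves the random Laplacian-type structure of Definition \ref{def:H}: evolve the independent off-diagonal entries by
\[
\d h_{ij}(t) = \frac{1}{\sqrt{N}}\, \d B_{ij}(t) - \tfrac{1}{2} h_{ij}(t)\, \d t, \qquad i<j,
\]
with $B_{ij}$ independent standard Brownian motions, and define the diagonal through the Laplacian constraint $H_{ii}(t) = -\sum_{j\neq i} h_{ij}(t)$. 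Along this flow the marginal variance $1/N$ is preserved, the moment bounds \eqref{asup:moment} hold with constants uniform in $t$, $\f e$ stays in the kernel, and the Gaussian stationary distribution is precisely the random Laplacian-type matrix built from i.i.d.\ Gaussian off-diagonal entries.

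The central structural input is a decomposition of the Gaussian Laplacian-type matrix on $\f e^\perp$: in a suitable orthonormal basis extending $\f e$, its restriction to $\f e^\perp$ is, up to a small operator-norm error, the sum of a diagonal matrix $V$ encoding the rowsum contributions and an independent GOE matrix of variance $1/N$. Combined with Gaussian additivity along the flow, this gives on $\f e^\perp$
\[
H_t\big|_{\f e^\perp} \;\approx\; V_0 + \sqrt{t}\, W,
\]
where $V_0$ depends only on the initial data $H_0$ and $W$ is an independent GOE matrix. The general-initial-data DBM result of \cite{DBM} then yields gap and averaged correlation function universality for $H_t$ in the bulk at any time $t \geq N^{-1+\delta}$, since its hypotheses --- a local law at the optimal scale together with rigidity of the quantiles for the initial data, plus regularity of $\rhofc$ on $[-L,L]$ --- follow from Theorem \ref{t:lclaw}, Corollary \ref{cor:rig}, and known analytic properties of the free convolution \eqref{e:mfc}.

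For step three, I would compare $H_0$ to $H_t$ with $t = N^{-1+\delta}$ by the observation of \cite{BoYa}: instead of a Green's function comparison theorem, view the Ornstein--Uhlenbeck flow as a matrix-valued process and bound the change of each bulk eigenvalue using first-order perturbation theory applied directly to the matrix SDE. The delocalization estimate of Corollary \ref{cor:del} controls the diagonal terms $\langle u_i, (\d H_t) u_i\rangle$, the drift term is handled by rigidity, and the off-diagonal terms are bounded by a martingale argument, yielding a cumulative eigenvalue shift of order $o(N^{-1})$ on the timescale $t = N^{-1+\delta}$, which is negligible on the scale of the gap statistics. Combining the three steps gives Theorem \ref{t:uni}. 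The main obstacle I anticipate is the Gaussian decomposition on $\f e^\perp$ in step two: the rowsum diagonal entries are correlated with the off-diagonal entries, so one must exhibit an explicit orthonormal basis extending $\f e$, write $H$ in that basis, and verify that after projecting out the Laplacian constraint the coupling between the diagonal and off-diagonal Gaussian parts reorganizes into an independent diagonal-plus-GOE structure with a residual coupling error quantifiable on the scale $N^{-1}$. Once this decomposition is in place, checking the hypotheses of \cite{DBM} for $V_0$ reduces to the local law and rigidity already established in Section \ref{locallawsec}.
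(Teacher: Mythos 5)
Your high-level strategy matches the paper's: the matrix Ornstein--Uhlenbeck flow \eqref{eq:dynamic} preserving the Laplacian constraint, the decomposition of the Gaussian Laplacian on $\f e^\perp$ into an independent GOE piece plus a ``diagonal-type'' piece, the appeal to \cite{DBM} for universality at time $t$, and a short-time comparison in the spirit of \cite{BoYa}. Two points, however, need to be flagged.

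First, a correction of a side point you raise as ``the main obstacle.'' You anticipate that the decomposition of the Gaussian Laplacian on $\f e^\perp$ holds only ``up to a small operator-norm error'' that must be quantified on scale $N^{-1}$. In fact the paper's Proposition \ref{p:decom_G} shows the decomposition is an \emph{exact} equality in law: with $\tilde R=(R\,|\,\f e)$ orthogonal, $R^*WR \stackrel{d}{=} GOE + R^*DR + gI_N$, where $D$ has iid Gaussian diagonal entries of variance $(N+1)/N$, $g$ is an independent Gaussian, and all three summands are mutually independent. The covariance computation closes exactly thanks to the trick of introducing auxiliary Gaussian variables $w_{ii}$ with variance $2/N$; no residual coupling error needs to be tracked. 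Note also that the ``diagonal'' piece is $R^*DR+gI_N$, not literally diagonal (though it is deformed-Wigner-type initial data, which is what \cite{DBM} needs). One must also establish a local law for this initial data $\hatA_t$ separately, as the correlation between rows and the extra Gaussian diagonal means this does not follow verbatim from Theorem \ref{t:lclaw}; this is the content of Lemma \ref{lem:Htdecomp} in the paper and requires rerunning the self-consistent-equation argument with the $d_k$'s added.

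Second, there is a genuine gap in your step three. You propose to ``bound the change of each bulk eigenvalue using first-order perturbation theory applied directly to the matrix SDE,'' controlling the diagonal terms by delocalization, the drift by rigidity, and the off-diagonal terms by a martingale argument. The problem is the second- and higher-order terms in the It\^o/Taylor expansion of $\lambda_i(H_t)$ (equivalently, the derivatives $\hatdel_{ab}^r\lambda_i$) contain denominators $(\lambda_i-\lambda_j)^{-k}$ which are not controlled by delocalization, rigidity, or a martingale argument alone: on the event of a near-degeneracy these blow up. The paper addresses this through a level-repulsion estimate, Proposition \ref{p:tlevp}, which asserts $\PP(|\lambda_i-\lambda_{i+1}|\leq N^{-1-\tau})\leq N^{-\tau/2}$, and which itself requires a nontrivial bootstrap: one first obtains level repulsion for $H_t$ from the DBM analysis of \cite{DBM}, then transfers it back to $H_0$ via the continuity Lemma \ref{L:conti} applied to the smoothed quantity $\chi(Q_i(H))$ with $Q_i(H)=N^{-2}\sum_{j\neq i}(\lambda_i-\lambda_j)^{-2}$. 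Only with this estimate in hand can one cut off the small-gap event and make the eigenvalue tracking rigorous. Relatedly, the paper does \emph{not} bypass a Green's function comparison: the correlation-function part of Proposition \ref{p:short_t_comp} goes through Lemma \ref{lem:Gcorr}, a resolvent comparison lemma, combined with the transfer Lemma \ref{lem:comparecorr}, and this in turn requires the a priori resolvent bound at sub-microscopic scales $\eta\geq N^{-1-\gamma}$ (Proposition \ref{p:reso_bound}), which does not follow from Corollary \ref{cor:del} alone. Your proposal as written does not supply these ingredients.
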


\subsection{Toy Gaussian Model}\label{s:toy}
In this section we introduce a toy Gaussian model of the random Laplacian-type matrix.  We let $W$ be the following $(N+1) \times (N+1)$ real symmetric matrix
\begin{align}\label{e:GLM}
W=\left(
\begin{array}{cccc}
-\sum_{j\neq1}w_{1j} & w_{12} &\cdots & w_{1N+1}\\
w_{21} & -\sum_{j\neq 2}w_{2j} & \cdots &w_{2N+1}\\
\vdots & \vdots & \vdots & \vdots\\
w_{N+11} & w_{N+12} & \cdots & -\sum_{j\neq N+1}w_{N+1j}
\end{array}
\right)
\end{align}
where $\{w_{ij}, 1\leq i< j \leq N+1\}$ are independent standard Gaussian random variables with mean $0$ and variance $1/N$.

We sketch a proof of Theorem \ref{t:lclaw} and \ref{t:uni} for the above toy model. This argument heavily depends on the Gaussian structure and cannot be generalized to the random Laplacian-type matrix of Definiton \ref{def:H}. However, the argument provides some intuition as to why one expects that Theorem \ref{t:uni} holds.  More importantly, the structure of $W$ exhibited in the decomposition \eqref{e:declaw} will play a role in the proof for the general case.

The matrix $W$ always has a trivial eigenvalue $0$ with eigenvector $\f e$ and so we need to remove this eigenvalue in order to obtain universality.
 We project $W$ down to an $N\times N$ matrix by conjugation with an orthogonal matrix $R$. Let $\tilde{R}$ be any fixed $(N+1)\times (N+1)$ deterministic orthogonal matrix with last column $\f e$.  That is, $\tilde{R}=(R| \f e)$, where $R$ is the $(N+1)\times N$ matrix consisting of the first $N$ columns of $\tilde{R}$. We then have
\beqn
\tilde{R}^*W\tilde{R}=\hat{W}\oplus 0,
\eeqn
where $\hat{W}$ is the $N \times N$ matrix
\beqn
\hat{W} := R^* W R.
\eeqn
The entries of the matrix $\hat{W}$ are joint Gaussian random variables. We compute the covariance structure of the matrix $\hat{W}$, which will prove the following.
\begin{proposition}\label{p:decom_G}
We have the following equality in law,
\begin{align}\label{e:declaw}
\hat{W}\stackrel{d}{=}GOE+ R^* D R+gI_{N},
\end{align}
where $GOE$ is an $N\times N$ GOE matrix, 
 $D$ is an $(N+1)\times (N+1)$ diagonal matrix with diagonal entries independent Gaussian random variables with variance $(N+1)/N$, and $g$ is a Gaussian random variable with variance $1/N$.  All three are independent of each other.
\end{proposition}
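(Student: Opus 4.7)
The plan is to reduce the distributional identity to a second-moment computation. Both sides of \eqref{e:declaw} are centered real symmetric Gaussian random matrices (linear in independent centered Gaussians), so it suffices to verify that $\EE[\hat W_{ij}\hat W_{kl}]$ agrees with the corresponding covariance on the right-hand side for every quadruple $(i,j,k,l) \in [[1,N]]^4$.

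The first and most important step is to derive the factorized representation
\[
\hat W_{ij} \;=\; -\sum_{a<b}(r_{ai}-r_{bi})(r_{aj}-r_{bj})\,w_{ab},
\]
where $r_{am}$ denotes the $(a,m)$ entry of $R$. I would obtain this by writing $\hat W_{ij}=\sum_{a,b}r_{ai}W_{ab}r_{bj}$, splitting into the off-diagonal contribution $\sum_{a\ne b}r_{ai}r_{bj}w_{ab}$ and the diagonal contribution $-\sum_a r_{ai}r_{aj}\sum_{c\ne a}w_{ac}$, and then symmetrizing over pairs $(a,b)\leftrightarrow(b,a)$ using $w_{ab}=w_{ba}$; the coefficient of each $w_{ab}$ collapses to the completed square above. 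Alongside this I would record the two orthogonality identities furnished by $\tilde R=(R\mid \f e)$: namely $\sum_{a=1}^{N+1}r_{am}=0$ (since the $m$-th column of $R$ is orthogonal to $\f e$) and $\sum_{a=1}^{N+1}r_{am}r_{an}=\delta_{mn}$ (from $R^*R=I_N$).

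Next, using $\EE[w_{ab}^2]=1/N$ and extending the sum to all pairs since the summand vanishes on the diagonal,
\[
\EE[\hat W_{ij}\hat W_{kl}] \;=\; \frac{1}{2N}\sum_{a,b=1}^{N+1}\prod_{m\in\{i,j,k,l\}}(r_{am}-r_{bm}).
\]
I would expand the product into the $2^4=16$ monomials indexed by the subset $S\subseteq\{i,j,k,l\}$ specifying which factors contribute $-r_{bm}$. Terms with $|S|$ odd contain a factor $\sum_a r_{am}=0$ and drop out. The two monomials with $|S|\in\{0,4\}$ each give $(N+1)T_{ijkl}$, where $T_{ijkl}:=\sum_a r_{ai}r_{aj}r_{ak}r_{al}$. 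Each of the six $|S|=2$ monomials factors as $\bigl(\sum_a r_{am}r_{an}\bigr)\bigl(\sum_b r_{bp}r_{bq}\bigr)=\delta_{mn}\delta_{pq}$, so these contribute $2(\delta_{ij}\delta_{kl}+\delta_{ik}\delta_{jl}+\delta_{il}\delta_{jk})$. Dividing by $2N$ yields
\[
\EE[\hat W_{ij}\hat W_{kl}] \;=\; \tfrac{N+1}{N}\,T_{ijkl} \;+\; \tfrac{1}{N}\bigl(\delta_{ij}\delta_{kl}+\delta_{ik}\delta_{jl}+\delta_{il}\delta_{jk}\bigr).
\]

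Finally I would compute the covariance of the right-hand side of \eqref{e:declaw}. By independence of the three summands, the GOE piece contributes $\tfrac{1}{N}(\delta_{ik}\delta_{jl}+\delta_{il}\delta_{jk})$, the $gI_N$ piece contributes $\tfrac{1}{N}\delta_{ij}\delta_{kl}$, and $(R^*DR)_{ij}=\sum_a r_{ai}r_{aj}D_{aa}$ together with $\EE[D_{aa}D_{bb}]=\tfrac{N+1}{N}\delta_{ab}$ contributes $\tfrac{N+1}{N}T_{ijkl}$. Summing gives precisely the expression displayed above, so the two centered Gaussian symmetric matrices have the same covariance and hence the same law. The only real obstacle is spotting the completed-square identity in the factorization step; once that is in place, the two orthogonality relations kill the odd-order terms and pair the second-order terms, and the rest of the computation is routine.
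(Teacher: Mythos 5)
Your proof is correct, and it takes a genuinely different computational route through the covariance calculation than the paper does. Both arguments share the essential structural observation that $\hat W$ and $GOE + R^*DR + gI_N$ are centered jointly Gaussian matrices, so equality of second moments implies equality in law. The paper then proceeds by first computing the covariance tensor $\EE[W_{ij}W_{kl}]$ of the full $(N+1)\times(N+1)$ Laplacian directly, using the device of introducing auxiliary diagonal Gaussians $w_{ii}$ with variance $2/N$ so that the GOE-like identity $\EE[w_{ij}w_{kl}]=\tfrac1N(\delta_{ik}\delta_{jl}+\delta_{il}\delta_{jk})$ holds uniformly in all indices; it then conjugates by $R$ and simplifies the resulting fourfold sum using orthogonality. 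Your route instead bypasses the intermediate formula for $\EE[W_{ij}W_{kl}]$ (with its $\delta_{ijk},\delta_{ijkl}$ bookkeeping) by first deriving the clean factorized representation $\hat W_{ij}=-\sum_{a<b}(r_{ai}-r_{bi})(r_{aj}-r_{bj})\,w_{ab}$ via the completed-square identity $r_{ai}r_{bj}+r_{bi}r_{aj}-r_{ai}r_{aj}-r_{bi}r_{bj}=-(r_{ai}-r_{bi})(r_{aj}-r_{bj})$; this makes each $\hat W_{ij}$ manifestly a linear combination of the \emph{independent} variables $\{w_{ab}\}_{a<b}$, so the covariance drops out immediately, and the two orthogonality relations of $R$ (column sums vanish, columns orthonormal) handle the inclusion--exclusion expansion. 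Your approach is arguably slicker and more self-contained since it avoids the auxiliary-variable trick and the intermediate tensor, while the paper's approach has the advantage that the formula for $\EE[W_{ij}W_{kl}]$ is stated explicitly and could be reused elsewhere. Both correctly arrive at $\EE[\hat W_{ij}\hat W_{kl}]=\tfrac{N+1}{N}\sum_a r_{ai}r_{aj}r_{ak}r_{al}+\tfrac1N(\delta_{ij}\delta_{kl}+\delta_{ik}\delta_{jl}+\delta_{il}\delta_{jk})$, which matches the right-hand side term by term.
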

\begin{proof} The matrix elements of both sides of \eqref{e:declaw} are jointly distributed centered Gaussians.  
Hence we need only to check that the covariances of the matrix elements are the same. For the left hand side of \eqref{e:declaw} we first compute the covariance structure of $W$. We introduce auxiliary independent Gaussian variables $w_{ii}$'s, for $1\leq i\leq N+1$, with variance $2/N$.  We have,
\begin{align}
\EE[W_{ij}W_{kl}]
=&\EE[(w_{ij}-\delta_{ij}\sum_{m=1}^{N+1} w_{mi})(w_{kl}-\delta_{kl}\sum_{m=1}^{N+1} w_{mk})]\notag\\
=&\frac{1}{N}\left(\delta_{ik}\delta_{jl}+\delta_{il}\delta_{jk}-\delta_{ijk}-\delta_{ijl}-\delta_{ikl}-\delta_{jkl}+\delta_{ij}\delta_{kl}+(N+1)\delta_{ijkl}\right)
\end{align}
where $\delta_{abc}=1$ (resp $\delta_{abcd}=1$) if and only if $a=b=c$ (resp $a=b=c=d$), otherwise it is zero. 
For indices $a, b, c, d \in [[1, N]]$ we have
\begin{align}
&\EE[\hat{W}_{ab}\hat{W}_{cd}]=\EE[\sum_{i,j,k,l}R_{ia}W_{ij}R_{jb}R_{kc}W_{kl}R_{ld}]\notag\\
=&\frac{1}{N}\sum_{i,j,k,l}R_{ia}R_{jb}R_{kc}R_{ld}\left(\delta_{ik}\delta_{jl}+\delta_{il}\delta_{jk}-\delta_{ijk}-\delta_{ijl}-\delta_{ikl}-\delta_{jkl}+\delta_{ij}\delta_{kl}+(N+1)\delta_{ijkl}\right)\notag\\
=&\frac{1}{N}\left(\delta_{ac}\delta_{bd}+\delta_{ad}\delta_{bc}+\delta_{ab}\delta_{cd}+(N+1)\sum_{i=1}^{N+1}R_{ia}R_{ib}R_{ic}R_{id}\right),
\end{align}
where we used the fact that all columns of $R$ are orthogonal to $\f e$.
For the righthand side of \eqref{e:declaw} we compute
\begin{align}
&\EE[(GOE+  R^*D R+gI_{N})_{ab} \times (GOE+  R^*D R+gI_{N})_{cd}]\notag\\
=&\EE[GOE_{ab} GOE_{cd}]+ \EE[(R^* D R)_{ab} (R^* DR)_{cd}]+\EE[(gI_{N})_{ab}  (gI_{N})_{cd}]\notag \\
=&\frac{1}{N}(\delta_{ac}\delta_{bd}+\delta_{ad}\delta_{bc})+
\frac{N+1}{N}\sum_{i=1}^{N+1}R_{ia}R_{ib}R_{ic}R_{id}+\frac{1}{N}\delta_{ab}\delta_{cd}
\end{align}
This finishes the proof.
\end{proof}

The advantage of the decomposition \eqref{e:declaw} is that the two summands $R^* DR+gI_{N}$ and $GOE$ are independent. In this case, both the global empirical eigenvalue distribution and the local eigenvalue statistics are well-understood. For the global eigenvalue distribution, one notices that the empirical distribution of $R^* D R+gI_{N}$ converges to the law of a standard Gaussian. From the theory of free probability, the global law of eigenvalue distribution of $\hat{W}$ converges to free convolution of semi-circle and standard gaussian distributions. We refer to \cite[Chapter 5]{AGZ} for an introduction on free probability. For the local eigenvalue statistics, one can show that 
 the Stieltjes transform of the empirical eigenvalue distribution of $R^* D R+g I_N$ is close to that of a Gaussian down to the optimal scale, i.e. for $\eta \gtrsim N^{-1}$,  with overwhelming probability. 
 The RHS of \eqref{e:declaw} is a deformed Wigner ensemble, and universality of the bulk eigenvalue statistics can be concluded from the main results of \cite{LSSYdeformWB} or \cite{DBM}. 


\section{Local laws}\label{locallawsec}
The goal of this section is to prove Theorem \ref{t:lclaw}, the local law for random Laplacian-type matrices. The proof is based on the analysis of the self-consistent equation for the empirical Stieltjes transform of $H$. The main difficulty here is the  correlation between diagonal entries and off-diagonal entries of $H$. As a result, the analysis is more involved than in the case of Wigner or deformed Wigner ensembles.

\subsection{Notation and Prelimilaries}
We introduce some notation. We denote the $k$-th minor of $H$ by $\Hk$, which is the submatrix of $H$ with the $k$-th row and column removed. We denote the random Laplacian-type matrix of the subgraph obtained by removing the $k$-th vertex by $\tilHk$.  More precisely, we define $\tilHk$ by
\beqn
\Hk_{ij} =: \tilHk_{ij} - \delta_{ij} h_{ik}, \qquad i, j \neq k.
\eeqn
We denote the Green's functions of $H, \Hk$ and $\tilHk$ by $G$, $\Gk$ and $\tilGk$ respectively.  We furthermore define the normalized Stieltjes transforms by
\beqn
\mN(z) = \frac{1}{N} \tr G(z), \quad \mk (z) = \frac{1}{N} \tr \Gk (z) , \quad \tilmk (z) = \frac{1}{N} \tr \tilGk (z).
\eeqn
Note that we normalize the Stieltjes transform of $H$ by $1/N$ even though it is an $(N+1) \times (N+1)$ matrix. 
Finally, we denote by 
\beqn
\sumk_i = \sum_{1\leq i\leq N+1\atop i\neq k},\quad \sumk_{ij}=\sum_{1\leq i,j\leq N+1\atop i,j\neq k}
\eeqn
summation in the index $i$ or $i,j$ omitting the index $k$.

In the following we collect some resolvent identities which will be used repeatedly in the paper.
\begin{proposition} The Schur complement formula
\begin{equation} \label{eqn:schur}
G_{kk}(z) = \frac{1}{ -\sumk_i h_{ik} - z- \sumk_{ij} h_{ki}\Gk_{ij} h_{kj} }.
\end{equation}
The identities for $i, j \neq k$
\begin{equation} \label{eqn:minorid}
G_{ij} = \Gk_{ij} + \frac{ G_{ik} G_{kj}}{G_{kk}}.
\end{equation}
The identities for $i \neq k$
\begin{equation} \label{eqn:gijid}
G_{ik} = - G_{kk} \sumk_j h_{kj}G_{ji}^{(k)}.
\end{equation}
We have also the Ward identity
\begin{equation}
\sum_j |G_{ij}|^2 = \frac{1}{ \eta} \Im [ G_{ii} ] \label{eqn:ward}.
\end{equation}
All of the above identities hold with $G$ replaced by $\Gk$ or $\tilGk$. Lastly we have the resolvent identities, which connect $\Gk$ and $\tilGk$,  for $i, j \neq k$,
\begin{equation} \label{eqn:resolv}
\Gk_{ij} = \tilGk_{ij} + \sumk_l \Gk_{il} h_{kl} \tilGk_{lj} = \tilGk_{ij} + \sumk_l \tilGk_{il} h_{kl} \Gk_{lj} .
\end{equation}
\end{proposition}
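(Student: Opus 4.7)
The plan is to derive all four families of identities via elementary block-matrix manipulations applied to $H-z$; no deep input is required beyond keeping track of the Laplacian constraint on the diagonal entries, and the same arguments apply with $G$ replaced by $\Gk$ or $\tilGk$ since these are just resolvents of smaller (Laplacian or minor) matrices.

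First, I would write $H-z$ in $2\times 2$ block form, isolating the $k$-th row and column: the lower-right $1\times 1$ scalar is $H_{kk}-z = -\sumk_i h_{ik}-z$, the upper-left block is $\Hk-z$, and the off-diagonal blocks are $(h_{jk})_{j\neq k}$ and its transpose. The standard Schur complement formula for block inversion immediately yields
\[
G_{kk} = \frac{1}{H_{kk}-z - \sumk_{ij} h_{ki}\Gk_{ij}h_{kj}},
\]
which is \eqref{eqn:schur}. Reading off the off-diagonal block of the inverse gives $G_{ik} = -G_{kk}\sumk_j \Gk_{ij} h_{jk}$, i.e.\ \eqref{eqn:gijid}, and comparing the upper-left block of the inverse with $(\Hk-z)^{-1} = \Gk$ yields the minor identity \eqref{eqn:minorid}. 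For the Ward identity, I would use the spectral decomposition $H = \sum_\alpha \lambda_\alpha u_\alpha u_\alpha^*$ with orthonormal real eigenvectors; orthonormality collapses the double sum
\[
\sum_j |G_{ij}(z)|^2 = \sum_{\alpha,\beta} \frac{u_\alpha(i)u_\beta(i)}{(\lambda_\alpha-z)\overline{(\lambda_\beta-z)}} \sum_j u_\alpha(j) u_\beta(j) = \sum_\alpha \frac{u_\alpha(i)^2}{|\lambda_\alpha-z|^2},
\]
which matches $\eta^{-1}\Im G_{ii} = \sum_\alpha u_\alpha(i)^2/|\lambda_\alpha-z|^2$.

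The only place where the Laplacian structure plays a role is in the resolvent identity connecting $\Gk$ and $\tilGk$. Here I would compute $\tilHk - \Hk$ directly from the definitions: off the diagonal the two matrices agree (both equal $h_{ij}$), while on the diagonal $\Hk_{ii} = H_{ii} = -\sum_{j\neq i} h_{ij}$ drops the $j=k$ term when passing to $\tilHk_{ii} = -\sum_{j\neq i,k} h_{ij}$, so $\tilHk - \Hk = \diag(h_{ik})_{i\neq k}$. Applying the standard identity $A^{-1}-B^{-1} = A^{-1}(B-A)B^{-1} = B^{-1}(B-A)A^{-1}$ with $A = \Hk-z$, $B = \tilHk-z$ then produces both equalities in \eqref{eqn:resolv}. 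There is no real obstacle in this proposition; the only step requiring care is the accounting of the Laplacian sum when moving between $\Hk$ and $\tilHk$, which is a one-line computation.
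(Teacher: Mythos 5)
Your proof is correct and takes the standard route. The paper states this proposition without proof, treating it as a collection of well-known resolvent identities, so there is no argument in the source to compare against; but your derivations are exactly the expected ones: Schur complement block inversion for \eqref{eqn:schur}, \eqref{eqn:minorid}, \eqref{eqn:gijid}; spectral decomposition plus orthonormality for the Ward identity \eqref{eqn:ward}; and the second resolvent identity applied with the perturbation $\tilHk-\Hk=\mathrm{diag}(h_{ik})_{i\neq k}$ for \eqref{eqn:resolv}. You correctly pinpoint the one non-generic ingredient, namely that passing from $\Hk$ (the $k$-th minor of the full Laplacian) to $\tilHk$ (the Laplacian of the subgraph) only changes the diagonal by $h_{ik}$, which matches the paper's defining relation $\Hk_{ij}=\tilHk_{ij}-\delta_{ij}h_{ik}$. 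The use of the symmetry $h_{jk}=h_{kj}$ and $\Gk_{ij}=\Gk_{ji}$ to match the paper's index ordering in \eqref{eqn:gijid} and \eqref{eqn:resolv} is implicit in your write-up but unproblematic since everything is real symmetric.
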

By the Schur complement formula \eqref{eqn:schur}, we have
\beqn
G_{kk}(z)=\frac{1}{-\sumk_{i}h_{ik} -z- \mN (z)-{\cal E}_k},
\eeqn
where the error term is given by
\begin{align}\label{er}
{\cal E}_k(z):=\sumk_{i\neq j} h_{ki}G_{ij}^{(k)}(z)h_{jk}-\sumk_{i} (h_{ki}^2-\frac{1}{N})G_{ii}^{(k)}(z)-(\mN(z)-\mk(z)).
\end{align}

\begin{definition} \label{def:controlevents}
Let $c_*$ be the constant introduced in \eqref{eqn:immfc} below.  Note that $c_*$ depends only on the choice of $L$ in $\D_L$.  For any $z \in \D_L$ we define the events,
\begin{align}\label{def:Xi}
\Xik(z)=\{\max_{i, j \neq k} |\tilGk_{ij}(z)| \leq 2/c_*\}, \quad k=1,2,\cdots, N+1,
\end{align}
and the event
\begin{equation}
\Xi(z) := \bigcap_k \Xik(z) \cap \{ |\mN(z) - \mfc (z) | \leq ( \log (N) )^{-1} \}.
\end{equation}
\end{definition}
\remark 
We will often omit the parameter $z$ and write $\Xik$ and $\Xi$ if the context is clear. 

We will need the following regularity estimates for $\mfc$.
\begin{lemma}\label{l:reg}
For any fixed $L>0$ we have that the imaginary part of $\mfc$ is bounded above and away from zero,
\begin{align} \label{eqn:immfc}
c_* \leq \mathrm{Im} [\mfc(z)]\leq |\mfc(z)|\leq 1,
\end{align}
uniformly for $z\in \D_L^{(1)}$.  The constant $c_*$ depends only on $L$.  

 Moreover, $\mfc$ is Lipschitz
\begin{equation}\label{derbound}
| \mfc'(z) |\leq C 
\end{equation}
uniformly for all $ \Im[z] \geq 0$.
\end{lemma}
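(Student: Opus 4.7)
The plan is to derive the three inequalities from the defining equation \eqref{e:mfc} together with standard structural properties of the free convolution $\rhofc = \rhosc \boxplus \rho_G$ established in \cite{Biane}: the density $\rhofc$ is real analytic, strictly positive on all of $\RR$, and decays rapidly at infinity.

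First I would establish the upper bounds $\mathrm{Im}[\mfc(z)] \leq |\mfc(z)| \leq 1$. Writing $\mfc(z) = a + \i b$ and $z = E + \i \eta$ with $\eta > 0$, taking the imaginary part of \eqref{e:mfc} yields the key identity
\begin{equation}
\frac{b}{\eta + b} = \int \frac{\rho_G(x)}{|x - z - \mfc(z)|^2}\,\d x \leq 1.
\end{equation}
Applying Cauchy--Schwarz to the integral representation of $\mfc$ then gives
\begin{equation}
|\mfc(z)|^2 \leq \left( \int \rho_G(x)\,\d x \right)\left( \int \frac{\rho_G(x)}{|x - z - \mfc(z)|^2}\,\d x \right) \leq 1,
\end{equation}
and the bound $\mathrm{Im}[\mfc(z)] \leq |\mfc(z)|$ is immediate.

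For the lower bound on $\DLo$, I would use that $\mfc$ is the Stieltjes transform of $\rhofc$, so
\begin{equation}
\mathrm{Im}[\mfc(E + \i \eta)] = \int \frac{\eta\, \rhofc(x)}{(x-E)^2 + \eta^2}\,\d x.
\end{equation}
This is jointly continuous in $(E, \eta)$ on the compact set $\{|E| \leq L,\ 0 \leq \eta \leq 2\}$, with boundary value $\pi \rhofc(E)$ at $\eta = 0$. Since $\rhofc > 0$ on all of $\RR$ by \cite{Biane}, this continuous function is strictly positive on the compact set and therefore attains a positive minimum $c_* = c_*(L) > 0$. As $\DLo$ is contained in this compact set, the claim follows.

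For the derivative bound on $\{\mathrm{Im}[z] \geq 0\}$, I would treat bounded and large $|z|$ separately. For $|z|$ large, the power-series expansion $\mfc(z) = -\sum_{k \geq 0} \mu_k\, z^{-k-1}$, where $\mu_k = \int x^k \rhofc(x)\,\d x$ are the moments of $\rhofc$, gives $|\mfc'(z)| = O(|z|^{-2})$. On any bounded region of the closed upper half plane I would use the integration-by-parts identity
\begin{equation}
\mfc'(z) = \int \frac{\rhofc(x)}{(x-z)^2}\,\d x = \int \frac{\rhofc'(x)}{x - z}\,\d x,
\end{equation}
valid because $\rhofc$ decays rapidly at infinity. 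The right-hand side is the Stieltjes transform of the smooth, integrable function $\rhofc'$, which extends continuously to the closed upper half plane by Plemelj--Sokhotski and is thus uniformly bounded on any bounded subset. Combining the two regimes by compactness yields the uniform bound $|\mfc'(z)| \leq C$.

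The main obstacle is the strict positivity and real analyticity of $\rhofc$ on $\RR$. Both are standard consequences of the subordination theory for free convolution with a Gaussian \cite{Biane}, and would be cited rather than re-derived; once they are available, each estimate reduces to elementary Cauchy--Schwarz and compactness arguments.
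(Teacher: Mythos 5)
Your upper bound argument (Cauchy--Schwarz plus the imaginary-part identity $\Im[\mfc]/\Im[z+\mfc] = \int \rho_G/|x-z-\mfc|^2$) is exactly the paper's, and your compactness argument for the lower bound is a more explicit version of what the paper dispatches in a sentence (``follows from the fact that $\rfc$ is supported on the whole real axis''). The genuine divergence is in the Lipschitz bound, where you take a different route that works but relies on different inputs. The paper differentiates the self-consistent equation to write $\mfc'(z) = A/(1-A)$ with $A = \int \rho_G(x)/(x-z-\mfc(z))^2\,\d x$, then devotes the bulk of the proof to showing $\Re[A] \leq 1 - c$: this requires a case split into $\Im[z] \ge 2$, $|\Re[z]| \ge L'$ large (integration by parts, careful Gaussian tail estimates \eqref{firstt}, \eqref{secondt}), and $|\Re[z]| \le L'$ (Cauchy--Schwarz plus the lower bound $\Im[\mfc] \ge c'$). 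You instead bypass the fixed-point equation entirely, integrate by parts in the Stieltjes transform to write $\mfc'(z) = \int \rhofc'(x)/(x-z)\,\d x$, and invoke continuity up to the boundary. Your route is shorter and cleaner, but it externalizes the hard work into structural facts about $\rhofc$ -- real analyticity, strict positivity, and rapid decay of $\rhofc$ and hence of $\rhofc'$ -- which you cite to \cite{Biane} rather than verify. (You should be a bit more careful here: the Plemelj--Sokhotski argument requires $\rhofc'$ to be H\"older continuous and integrable with sufficient decay, and ``real analytic'' by itself controls neither the global sup norm of $\rhofc'$ nor its decay; you would want to note that the Gaussian tail bound on $\rhofc$, e.g.\ the $\rhofc(E) \le e^{-E^2/8}$ estimate the paper proves separately, propagates to $\rhofc'$ by Cauchy estimates on the analytic extension.) The paper's approach is self-contained from \eqref{e:mfc} and the elementary lower bound on $\Im[\mfc]$; yours trades those Gaussian tail computations for cited regularity theory. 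Both are valid, and each makes a reasonable choice about what to take as given.
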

\begin{proof}
We have by the Cauchy-Schwarz inequality,
\begin{equation}\label{Cauchy}
|\mfc (z) |^2 \leq \int \frac{ \rho_G(x)\d x } { |x - z - \mfc(z) |^2} = \frac{ \Im [ \mfc (z) ] }{ \Im [ z + \mfc (z) ] } \leq 1,
\end{equation}
which gives us the upper bound in \eqref{eqn:immfc}. The lower bound follows from the fact that $\rfc$ is supported on the whole real axis.

To prove that $\mfc$ is Lipschitz, we differentiate both sides of the defining equation \eqref{e:mfc} to obtain 
\beqn
 \left( 1 - \int \frac{\rho_G(x) \d (x) }{ ( x - z - \mfc (z) )^2 } \right)\mfc' (z)  = \int \frac{\rho_G(x)\d x}{(x-z-\mfc(z))^2}.
\eeqn
From \eqref{Cauchy}, the right-hand side is bounded by $1$, and so we need only prove that the factor in front of $\mfc'(z)$ is bounded away from $0$. 
In the following we prove that
\begin{equation}\label{e:breal}
\mathrm{Re} \left[ \int \frac{ \d \mu_G (x) }{ ( x - z - \mfc (z) )^2 } \right]\leq 1 - c
\end{equation}
for a $c>0$ and \eqref{derbound} follows.

If the imaginary part of $z$, $\Im[z]\geq 2$, then $|x-z-\mfc(z)|\geq 2$, and \eqref{e:breal} is trivial. Therefore in the following we assume that the imaginary part of $z$ is bounded by $2$.

We first consider the regime $|\Re [z] | \geq L'$ for a fixed large $L' > 0$ chosen later.  Denote $z + \mfc (z) = a + b \i$.  WLOG take $\Re [z] \geq 0$ and so $a \geq L'-1$ because $| \mfc| \leq 1$.
Integration by parts yields,
\beq
\int \frac{ \d \mu_G (x) }{ ( x - z - \mfc (z) )^2 }
=\int \frac{-x \rho_G(x)\d x}{x-z-\mfc(z)}
=\int_{a-4}^{a+4} \frac{-x \rho_G(x)\d x}{x-z-\mfc(z)}
+\int_{[a-4,a+4]^c}\frac{-x \rho_G(x)\d x}{x-z-\mfc(z)}.
\eeq
Choosing $L'>0$ large enough we see that the second term is bounded by
\beqn
 \left|\int_{[a-4,a+4]^c}\frac{-x \rho_G(x)\d x}{x-z-\mfc(z)}\right|\leq \frac{1}{4}\int |x|\rho_G(x)\d x\leq \frac{1}{4}.
\eeqn
We rearrange the first term as 
\begin{align}\label{e:firstterm}
 \int_{a-4}^{a+4} \frac{-x \rho_G(x)\d x}{x-z-\mfc(z)}
 =-\int_{a-4}^{a+4}\rho_G(x)\d x-(z+\mfc(z)) \int_{a-4}^{a+4} \frac{ \rho_G(x)\d x}{x-z-\mfc(z)}.
\end{align}
For the first integral in \eqref{e:firstterm} we have the bound,
\begin{align}\label{firstt}
 \left|-\int_{a-4}^{a+4}\rho_G(x)\d x\right|
 \leq \frac{1}{\sqrt{2\pi}(a-4)}e^{-(a-4)^2/2}.
\end{align}
The second term in \eqref{e:firstterm} is somewhat harder due to the ``singularity'' at $x=a$. We write,
\begin{align}
 &(z+\mfc(z)) \int_{a-4}^{a+4} \frac{ \rho_G(x)\d x}{x-z-\mfc(z)}
 =(a+\i b)\left(\int_{0}^{4}\d y\frac{\rho_G(a+y)}{y-\i b}-\frac{\rho_G(a-y)}{y+\i b}\right)\notag\\
 =&(a+\i b)\left(\int_{0}^{4}\d y\frac{\rho_G(a+y)-\rho_G(a-y)}{y-\i b}\right)+(a+\i b) \int_{0}^4 \d y \frac{2\i b}{y^2+b^2}\rho_G(a-y).
\end{align}
The real part of above expression is bounded above by
\begin{align}\label{secondt}
 (a+3)\int_0^4 \d y \frac{\rho_G(a+y)-\rho_G(a-y)}{y}+\int_0^4 \d y \frac{2 b^2}{y^2+b^2}\rho_G(a-y)\leq \frac{2(a^2+3a+1)}{\sqrt{2\pi}(a-4)}e^{-(a-4)^2/2}.
\end{align}
Therefore \eqref{e:breal} follows after taking $L' >0$ sufficiently large.

We now consider the case $|\Re[z]|\leq L'$.  We have that the imaginary part of $\mfc$ is bounded below by $c'$. In this case we have
\begin{align}
\notag\mathrm{Re} \left[ \int \frac{ \rho_G(x)\d x }{ ( x - z - \mfc (z) )^2 } \right]
=&\int
\rho_G(x)\d x\frac{(\Re[x-z-\mfc])^2-(\Im[z+\mfc])^2 }{|x-z-\mfc|^4}\\
\notag\leq&
\int
\frac{\rho_G(x)\d x }{|x-z-\mfc|^2}-
(\Im[z+\mfc])^2 \int \frac{ \rho_G(x)\d x}{|x-z-\mfc|^4}\\
\leq& 1 - ( \Im [ \mfc (z) ] )^2\leq 1-(c')^2,
\end{align}
where in the second inequality we have used the Cauchy-Schwarz inequality
\begin{align}
 (\Im[z+\mfc])^2 \int \frac{ \rho_G(x)\d x}{|x-z-\mfc|^4}
 \geq (\Im[z+\mfc])^2 \left(\int \frac{ \rho_G(x)\d x}{|x-z-\mfc|^2}\right)^2=(\Im[\mfc])^2.
\end{align}
This finishes the proof of \eqref{e:breal}.
\end{proof}

The following proposition provides a discrete version of the estimate \eqref{e:breal}.
\begin{proposition}\label{p:stab}  There is a constant $c_1$ depending only on the constant $c_*$ from \eqref{eqn:immfc} so that the following holds uniformly for $z \in \DLo$. 
Suppose that $w_1,w_2,\cdots, w_{N+1}$ is a sequence of real numbers satisfying
\beqn
\frac{1}{N}\sum_{k}\frac{1}{w_k-z-\mfc(z)}=\mfc(z)+\epsilon(z)
\eeqn
where the error term satisfies 
\beq
|\epsilon(z)|\leq c_1
\eeq
Then we have
\begin{align} \label{eqn:stab}
\frac{c_*^2}{16}\leq \left| 1-\frac{1}{N}\sum_{k}\frac{1}{(w_k-z-\mfc(z))^2}\right|\leq 1 + \frac{1}{c_*^2}
\end{align}
In fact we can take $c_1 = \min \{ c_*^3/16, c_*/2 \}$.
\end{proposition}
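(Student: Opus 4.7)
The plan is to follow the strategy of Lemma \ref{l:reg}, of which this proposition is a discrete analog. Throughout write $\alpha := z + \mfc(z)$, $a := \Re[\alpha]$, $b := \Im[\alpha]$, and denote the sum of interest by $X := \frac{1}{N}\sum_k (w_k - \alpha)^{-2}$. From Lemma \ref{l:reg} one has $b \geq \Im[\mfc] \geq c_*$, and the restriction $|\epsilon| \leq c_1 \leq c_*/2 \leq \Im[\mfc]/2$ yields the useful bounds $\Im[\mfc + \epsilon] \geq c_*/2$ and $|\mfc + \epsilon| \leq 1 + c_*/2$.

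The upper bound will be a straightforward application of Cauchy--Schwarz. Writing inverses as conjugates over moduli squared gives the identity $|w - \alpha|^{-2} = b^{-1}\Im[(w - \alpha)^{-1}]$ for $w$ real, so by the triangle inequality and the hypothesis
\begin{equation}
|X| \leq \frac{1}{N}\sum_k \frac{1}{|w_k - \alpha|^2} = \frac{\Im[\mfc + \epsilon]}{b}.
\end{equation}
Using instead the cruder bound $|w_k - \alpha|^2 \geq b^2 \geq c_*^2$ one finds $|X| \leq 1/c_*^2 + O(1/N)$, and the claim $|1 - X| \leq 1 + 1/c_*^2$ follows from the triangle inequality.

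For the lower bound I mirror the Cauchy--Schwarz argument used in the proof of \eqref{e:breal} in Lemma \ref{l:reg}. Setting $\mu_k := 1/((w_k - a)^2 + b^2)$, a direct calculation gives
\begin{equation}
\Re[X] = \frac{\Im[\mfc + \epsilon]}{b} - \frac{2b^2}{N}\sum_k \mu_k^2,
\end{equation}
and Cauchy--Schwarz in the form $(\sum_k \mu_k)^2 \leq (N+1)\sum_k \mu_k^2$, together with $\frac{1}{N}\sum_k \mu_k = \Im[\mfc + \epsilon]/b$, produces $\frac{2b^2}{N}\sum_k \mu_k^2 \geq \frac{2N}{N+1}(\Im[\mfc + \epsilon])^2$. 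Substituting yields
\begin{equation}
1 - \Re[X] \geq 1 - \frac{\Im[\mfc + \epsilon]}{b} + \frac{2N}{N+1}(\Im[\mfc + \epsilon])^2.
\end{equation}

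The final step exploits the explicit choice $c_1 = \min(c_*^3/16, c_*/2)$. The non-negative quantity $1 - \Im[\mfc]/b = \eta/b$ is perturbed by at most $|\Im[\epsilon]|/b \leq c_1/c_* \leq c_*^2/16$, so $1 - \Im[\mfc + \epsilon]/b \geq -c_*^2/16$. The bound $\Im[\mfc + \epsilon] \geq c_*/2$ together with $2N/(N+1) \geq 1$ gives $\tfrac{2N}{N+1}(\Im[\mfc + \epsilon])^2 \geq c_*^2/2$, and summing yields $1 - \Re[X] \geq -c_*^2/16 + c_*^2/2 = 7c_*^2/16$; hence $|1 - X| \geq 1 - \Re[X] \geq c_*^2/16$. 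The main obstacle is precisely this perturbative step: the continuous inequality $\Im[\mfc]/b \leq 1$ can be nearly saturated when $\eta$ is small, so the stronger constraint $c_1 \leq c_*^3/16$ (beyond the naive $c_1 \leq c_*/2$) is what forces $|\epsilon|/b$ to stay well below the target scale $c_*^2$ and keeps the combined lower bound strictly positive.
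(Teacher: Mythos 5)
Your proof follows the same route as the paper's: compute $\Re[X]$ by expanding the real part of $(w_k-\alpha)^{-2}$, bound the second sum from below via Cauchy--Schwarz, and then use the smallness of $\epsilon$ to close. Your $\mu_k$ identity $\Re[X]=\tfrac{1}{N}\sum_k\mu_k-\tfrac{2b^2}{N}\sum_k\mu_k^2$ is a cleaner algebraic route to the same inequality, but the argument is not genuinely different.

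One arithmetic slip: from $\Im[\mfc+\epsilon]\geq c_*/2$ and $\tfrac{2N}{N+1}\geq 1$ you only get $\tfrac{2N}{N+1}(\Im[\mfc+\epsilon])^2\geq c_*^2/4$, not $c_*^2/2$. This is harmless: $1-\Re[X]\geq -c_*^2/16+c_*^2/4=3c_*^2/16\geq c_*^2/16$, so the claimed lower bound still holds, just with a slightly worse constant than the $7c_*^2/16$ you wrote. Also note that the upper bound really yields $|X|\leq \tfrac{N+1}{N}c_*^{-2}$ (there are $N+1$ terms divided by $N$), so $|1-X|\leq 1+\tfrac{N+1}{N}c_*^{-2}$, which is marginally larger than the stated $1+c_*^{-2}$; the paper is equally loose here, and this is inconsequential for the way the proposition is used.
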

\begin{proof} 
The upper bound of \eqref{eqn:stab} is an immediate consequence of the lower bound of \eqref{eqn:immfc}. 
For the lower bound of \eqref{eqn:stab} we compute
\begin{align}
\mathrm{Re}\left[\frac{1}{N}\sum_{k}\frac{1}{(w_k-z-\mfc(z))^2}\right]
=&\frac{1}{N}\sum_k\frac{(\mathrm{Re}[w_k-z-\mfc])^2-(\mathrm{Im}[z+\mfc])^2}{|w_k-z-\mfc|^4} \notag \\
\leq& \frac{1}{N}\sum_k\frac{1}{|w_k-z-\mfc|^2}-\frac{1}{N}\sum_k\frac{(\mathrm{Im}[z+\mfc])^2}{|w_k-z-\mfc|^4}\notag \\
\leq& \frac{\mathrm{Im}[\mfc+\epsilon]}{\mathrm{Im}[\mfc+\eta]}-\frac{1}{2}(\mathrm{Im}[\mfc+\epsilon])^2 \notag \\
\leq& 1 +\frac{|\epsilon|}{\Im[\mfc]}-\frac{(\Im[\mfc])^2}{8} 
\leq 1-\frac{c_*^2}{16}.
\end{align}
The last line follows from our choice of $c_1$.  In the third  inequality we used the Cauchy-Schwarz inequality
\begin{equation}
\frac{1}{N+1} \sum_k \frac{ (\Im [ z + \mfc ] )^2 }{ |w_k - z - \mfc  |^4 } \geq \left( \frac{1}{N+1} \sum_{k}\frac{ \Im [ z + \mfc ] }{ |w_k - z - \mfc  |^2} \right)^2 = (\Im [ \mfc  + \epsilon ] )^2.
\end{equation}
\end{proof}
We also need the following concentration of measure result.
\begin{proposition}\label{sum}
For any $z\in \D_L$, with overwhelming probability, we have
\begin{equation} \label{e:sum}
\left|\frac{1}{N}\sum_{k}\frac{1}{-\sumk_{i}h_{ik}-z-\mfc(z)}-\mfc(z)\right| =O\left(\frac{\xi}{q}\right)
\end{equation}
\end{proposition}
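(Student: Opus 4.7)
My plan is to split $|Y - \mfc(z)| \leq |Y - \ee[Y]| + |\ee[Y] - \mfc(z)|$, where $Y := \frac{1}{N}\sum_k f(X_k)$ with $X_k := -\sumk_i h_{ik}$ and $f(x) := (x - z - \mfc(z))^{-1}$, and to bound each piece by $O(\xi/q)$ with overwhelming probability. Throughout, I will rely on the fact, supplied by Lemma \ref{l:reg}, that $\Im[z + \mfc(z)] \geq c_*$ uniformly on $\D_L$, which implies that $f$ and all of its derivatives are uniformly bounded.

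For the bias $|\ee[Y] - \mfc(z)|$, by symmetry it suffices to show $|\ee[f(X_1)] - \mfc(z)| = O(1/q)$. I would use a Lindeberg replacement: replace the independent variables $h_{i1}$ one at a time by independent Gaussians $g_{i1}\sim \mathcal{N}(0,1/N)$, so that after all replacements $X_1$ becomes a standard Gaussian and the resulting expectation equals $\mfc(z)$ by the defining relation \eqref{e:mfc}. For each replacement, I Taylor expand $f$ to fourth order around the partial sum, which is independent of both $h_{i1}$ and $g_{i1}$. The orders $1$ and $2$ cancel in expectation because $\ee[h_{i1}^p] = \ee[g_{i1}^p]$ for $p=1,2$. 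The third-order contribution is $O(1/(Nq))$ because $|\ee[h_{i1}^3] - \ee[g_{i1}^3]| \leq C/(Nq)$ by the moment bound \eqref{asup:moment} (and $\ee[g_{i1}^3] = 0$). The fourth-order Taylor remainder is $O(1/(Nq^2))$, which is subdominant. Summing the errors over the $N$ replacements yields a total bias of $O(1/q)$.

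For the fluctuation $|Y - \ee[Y]|$, I would use a martingale decomposition with respect to the filtration that reveals the $h_{ij}$ one at a time. Enumerate the $M = N(N+1)/2$ unordered pairs as $\{(i_l,j_l)\}_{l=1}^M$ and set $\mathcal{F}_l := \sigma(h_{i_1 j_1},\ldots,h_{i_l j_l})$, so that $Y - \ee[Y] = \sum_l \Delta_l$ with $\Delta_l := \ee[Y\mid \mathcal{F}_l] - \ee[Y\mid \mathcal{F}_{l-1}]$. Since modifying $h_{i_l j_l}$ alters only $X_{i_l}$ and $X_{j_l}$, and since $|f'|\leq c_*^{-2}$, one obtains $|\Delta_l| \leq C(|h_{i_l j_l}| + N^{-1/2})/N$. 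Applying the Burkholder--Davis--Gundy inequality together with $\ee[(\sum_l h_{i_l j_l}^2)^p] = O(N^p)$ (since this sum concentrates near its mean $M/N = O(N)$) yields $\ee[|Y-\ee[Y]|^{2p}] = O(N^{-p})$ for each fixed $p$. Markov's inequality with $p$ arbitrarily large then gives $|Y - \ee[Y]|\leq \xi/\sqrt{N}$ with overwhelming probability.

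Combining the two estimates and using $q \leq N^{1/2}$ (so $\xi/\sqrt{N} \leq \xi/q$), we conclude $|Y - \mfc(z)| = O(\xi/q)$ with overwhelming probability. The main point to check carefully is that the Taylor remainders in the Lindeberg step and the Lipschitz bound on $f$ in the martingale step are controlled uniformly on $\D_L$, even when $\eta$ is as small as $\xi^3 N^{-1}$; this is precisely what Lemma \ref{l:reg} provides.
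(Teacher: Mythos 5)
Your decomposition into bias and fluctuation matches the paper's strategy, and both your Lindeberg-replacement treatment of the bias and your martingale/BDG treatment of the fluctuation are sound ideas (the latter actually gives a sharper $\xi/\sqrt{N}$ than the paper's McDiarmid bound of $\xi/q$, though the bias is the bottleneck either way). However, there is a genuine gap at the very first step, and it concerns the subdomain $\DLt$.

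You claim that Lemma~\ref{l:reg} gives $\Im[z + \mfc(z)] \geq c_*$ uniformly on $\D_L$, and hence that $f(x) = (x - z - \mfc(z))^{-1}$ together with all its derivatives is uniformly bounded. This is not what Lemma~\ref{l:reg} says: the lower bound $\Im[\mfc(z)] \geq c_*$ holds only on $\DLo$. On $\DLt$ the spectral parameter satisfies $2\xi \leq |E| \leq 2\xi + L$ with $\eta$ allowed to be as small as $\xi^3 N^{-1}$. There $\Im[\mfc(E + \i\eta)] \approx \pi\rhofc(E)$ is super-exponentially small (the paper later shows $\rhofc(E) \lesssim e^{-E^2/8}$ for $|E|$ large), so the pole of $f$ at $z + \mfc(z)$ has tiny imaginary part. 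Consequently $\sup_{x\in\RR} |f^{(k)}(x)|$ blows up, and both your Lindeberg step (which needs a uniform bound on $f^{(3)}, f^{(4)}$) and your martingale step (which uses $|f'| \leq c_*^{-2}$) break down on $\DLt$. You cannot repair this simply by conditioning on the high-probability event $\max_k |X_k| \leq \xi$, because both estimates require controlling $f$ and its derivatives on the full range of the \emph{unbounded} random variables $X_k$.

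This is exactly the obstruction the paper's proof is designed to remove. The paper first truncates each $h_{ik}$ via $\rho$ and then truncates the row sum via a smooth cutoff $\chi$ with $|\chi| \leq \xi + 1$, replacing $X_k$ by $-\chi\bigl(\sumk_i \rho(h_{ik})\bigr)$. On $\DLt$ the resulting denominator satisfies $|g_k| \geq |E| - |\chi(\cdot)| - |\mfc| \geq \xi - 2$, which is bounded away from zero \emph{deterministically}, and the truncated quantity agrees with the original with overwhelming probability. Both the concentration step (McDiarmid or your martingale/BDG variant) and the Lindeberg bias step then go through. Your proof would be fixed by inserting the same truncation before the two estimates; without it, the argument is valid only on $\DLz \cup \DLo$, not on the full domain $\D_L$ that the proposition requires.
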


\begin{proof}
From the assumption \eqref{asup:moment}, we know that with overwhelming probability, we have 
\begin{align}\label{e:b_entry}
\max_{i<j}|h_{ij}|\leq \frac{\xi}{q \log N},\qquad \max_k\left|\sumk_i h_{ki}\right|\leq \xi.
\end{align} 
Thus we can do a truncation. For this, we define two cutoff functions.  First, take $\rho(x)=x$ on $[-\xi/(q\log N), \xi/(q\log N)]$, and $\rho(x)=\textrm{sign}(x)\xi/(q\log N)$ outside the interval.  Secondly, let $\chi(x)=x$ on $[-\xi, \xi]$, and $\chi(x)=\textrm{sign}(x)(\xi+1)$ for $|x| > \xi +1$, and with $\chi$ having its first three derivatives bounded. We also define the functions
\beqn
F(\{h_{ij}\}_{i<j})=\frac{1}{N}\sum_{k}\frac{1}{g_k},\qquad g_k:=g_k(\{h_{ik}\}_{i\neq k})=-\chi\left(\sumk_{i}\rho(h_{ik})\right)-z-\mfc(z).
\eeqn
We claim that $g_k$ is bounded away from $0$ on $\D_L$.  We have $|g_k| \geq | \Im [z] | \geq 2$ on $\DLz$.  On $\DLo$ we have $|g_k| \geq \Im [ \mfc ] \geq c_*$ and on $\DLt$ we have $|g_k| \geq \xi -2$. 
 Therefore, for any pair $i<j$, we have
\begin{align}
&\left|F(h_{12}, \cdots, h_{ij}', \cdots, h_{NN+1})-F(h_{12}, \cdots, h_{ij}, \cdots, h_{N N+1})\right|\notag \\
=&\frac{1}{N}\left|\frac{g_i(h_{ij})-g_i(h_{ij}')}{g_i(h_{ij})g_i(h_{ij}')}+\frac{g_j(h_{ij})-g_j(h_{ij}')}{g_j(h_{ij})g_j(h_{ij}')}\right|\leq \frac{C}{N}|\rho(h_{ij})-\rho(h_{ij}')|\leq \frac{2C\xi}{N q \log N}
\end{align}
Therefore by McDiarmid's inequality, we get
\begin{align}\label{e:Mac}
\PP\left(|F(\{h_{ij}\}_{i<j})-\EE[F(\{h_{ij}\}_{i<j})]|\geq \frac{t \xi}{q\log N}\right)\leq \e^{ - Ct^2}.
\end{align}
Hence, by taking $t=\log N$, we have that the following holds with overwhelming probability
\begin{align}\label{e:Mac_ineq}
|F(\{h_{ij}\}_{i<j})-\EE[F(\{h_{ij}\}_{i<j})]|\leq \xi/q.
\end{align}
Moreover, since $\max_{i<j}|h_{ij}|\leq \xi/(q \log N)$ with overwhelming probability we have
\begin{align}\label{e:hequ}
F(\{h_{ij}\}_{i<j})=\frac{1}{N} \sum_{k} \frac{1}{ - \chi\left(\sumk_i h_{ik} \right)- z - \mfc (z) } ,
\end{align}
By a similar argument as above we see that $|-\chi(\sumk_{i}h_{ik})-z-\mfc(z)|$ is bounded away from zero uniformly for $z \in \D_L$. Therefore both sides of \eqref{e:hequ} are uniformly bounded. By taking expectation on both sides of \eqref{e:hequ}, for any fixed $D>0$, we have 
\begin{align}\label{e:small_error}
\EE[ F(\{h_{ij}\}_{i<j})]= \EE\left[\frac{1}{N} \sum_{k} \frac{1}{ - \chi\left(\sumk_i h_{ik} \right)- z - \mfc (z) }\right]+O(N^{-D}),
\end{align}
given that $N$ is large enough.
Moreover, from our choice of $\chi$, we see that the first three derivatives of $(-\chi(\sumk_{i}h_{ik})-z-\mfc(z))^{-1}$ with respect to $h_{ik}$'s are bounded. By the standard Linderberg replacement trick one can show that 
\begin{align}\label{e:exp1}
\EE\left[\frac{1}{-\chi\left(\sumk_{i}h_{ik}\right)-z-\mfc(z)}\right]
\notag=&\EE\left[\frac{1}{-\chi\left(\sum_{i=1}^{N}x_i\right)-z-\mfc(z)}\right]+ O\left(\frac{1}{q}\right)\\
=&\EE\left[\frac{1}{-\sum_{i=1}^{N}x_i-z-\mfc(z)}\right]+ O\left(\frac{1}{q}\right)
\end{align}
where $x_{j}$'s are Gaussian random variables with mean zero and variance $1/N$.  The implicit constant in the error term depends only on the bound for third moment of the $h_{ij}$. Since $\sum_{i=1}^{N}x_i\stackrel{d}{=}{\cal N}(0,1)$ we have by definition,
\begin{align}\label{e:exp2}
\EE\left[\frac{1}{-\sum_{i=1}^{N}x_i-z-\mfc(z)}\right]=\mfc(z).
\end{align}
By combining  \eqref{e:small_error}, \eqref{e:exp1} and \eqref{e:exp2}, we get
\beqn
|\EE[F(\{h_{ij}\}_{i<j})]-\mfc|=O\left(\xi/q\right).
\eeqn
The claim follows from this and \eqref{e:Mac_ineq}.
\end{proof}

\subsection{Proof of Local Law}
In this subsection we prove Theorem \ref{t:lclaw}.  Recall the definition of the events $\Xik$ and $\Xi$ in Definition \ref{def:controlevents}, the Green's functions $G, \Gk, \tilGk$ and error term $\E_k$ in \eqref{er}.

We first prove that with overwhelming probability $\Xi(z)$ holds uniformly for $\eta \geq 2$. Then we will bootstrap the estimate by taking advantage of the self-consistent equation of 
the Stieltjes transform and the following error control, the proof of which is postponed to Section \ref{s:proof}.

\begin{proposition}\label{p:er}
For any $z\in \D_L$ all of the following holds with overwhelming probability.  We have,
\begin{align}\label{e:etry}
\1_{\Xik(z)}|\Gk_{ij}(z)-\tilGk_{ij}(z)|\leq \xi \Psi,
\end{align}
and
\begin{align}\label{e:entryav}
\1_{\Xik(z)}  \left|  \sumk_j h_{kj} \Gk_{ji} \right| \leq \xi \Psi
\end{align}
and
\begin{align}\label{e:er}
\1_{\Xik(z)}\E_k(z)\leq \xi \Psi.
\end{align}
where the control parameter $\Psi$ is introduced in \eqref{controlPsi}.
\end{proposition}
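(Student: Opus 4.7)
The plan is to reduce all three estimates to large-deviation bounds for polynomial forms in the independent family $\{h_{kj}\}_{j\neq k}$, against coefficients depending only on $\tilde{H}^{(k)}$ and hence independent of those variables. The key mechanism is that $\tilGk$, by construction, is independent of the $k$-th row of $H$, whereas $\Gk$ depends on $\{h_{kl}\}$ only through the shifted diagonal entries $H^{(k)}_{ii} = \tilde{H}^{(k)}_{ii} - h_{ik}$, so a resolvent expansion of $\Gk$ around $\tilGk$ converges in a controlled way.

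For part (1), I would iterate the identity \eqref{eqn:resolv} to obtain, for any integer $K\geq 1$,
\begin{equation*}
\Gk_{ij} = \tilGk_{ij} + \sum_{n=1}^{K-1} \sumk_{l_1,\ldots,l_n} \tilGk_{i l_1} h_{k l_1} \tilGk_{l_1 l_2} h_{k l_2}\cdots h_{k l_n} \tilGk_{l_n j} + R_K,
\end{equation*}
where $R_K$ is a remainder with $K$ factors $h_{k\cdot}$ and one trailing factor of $\Gk$. On $\Xik$ the $\tilGk$-entries are bounded by $2/c_*$; combined with the concentration $\max_{ij}|h_{ij}|\leq \xi/(q\log N)$ from \eqref{e:b_entry} and the operator-norm bound $\|\Gk\|\leq \eta^{-1}$, choosing $K$ to be a sufficiently large multiple of $1/\beta$ drives $R_K$ below any negative power of $N$. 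Each finite-order term is, conditionally on $\tilGk$, a homogeneous polynomial of degree $n$ in the independent mean-zero variables $\{h_{kl}\}_{l\neq k}$. For the leading $n=1$ term, the sparse large-deviation inequality for linear forms, together with the Ward identity $\sumk_l |\tilGk_{il}|^2 \leq \Im[\tilGk_{ii}]/\eta$, yields with overwhelming probability
\begin{equation*}
\biggl|\sumk_l \tilGk_{il} h_{kl} \tilGk_{lj}\biggr| \lesssim \xi\biggl(\frac{1}{q}\max_l|\tilGk_{il}\tilGk_{lj}| + \sqrt{\frac{1}{N}\sumk_l|\tilGk_{il}|^2|\tilGk_{lj}|^2}\biggr) \lesssim \xi\Psi,
\end{equation*}
and the higher-order terms satisfy strictly smaller bounds by iterated application of the same inequality.

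The bound (2) is then immediate from the same expansion applied inside $\sumk_j h_{kj}\Gk_{ji}$: the principal contribution $\sumk_j h_{kj}\tilGk_{ji}$ is a linear form in the independent variables $\{h_{kj}\}_j$ whose coefficient $L^2$-norm is $\lesssim \eta^{-1/2}$ by Ward, giving $\xi\Psi$, while all higher-degree contributions are smaller. The bound (3) on $\E_k$ follows by decomposing into the three summands of \eqref{er}. After replacing $\Gk$ by $\tilGk$ plus the expansion remainder, each quadratic form $\sumk_{i\neq j}h_{ki}\tilGk_{ij}h_{jk}$ and each centered diagonal sum $\sumk_i(h_{ki}^2 - N^{-1})\tilGk_{ii}$ has mean zero conditionally on $\tilGk$, and is controlled by the sparse-matrix Hanson--Wright inequality at the scale $\xi(q^{-1}+(N\eta)^{-1/2})=\xi\Psi$. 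Finally, $|\mN - \mk|\leq C/(N\eta)$ holds deterministically by eigenvalue interlacing, and is negligible compared to $\xi\Psi$.

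The main obstacle is the bookkeeping of the resolvent expansion: iterating \eqref{eqn:resolv} produces a proliferating family of multi-indexed sums, and one needs a systematic way to identify which sums genuinely concentrate around zero and which contribute lower-order deterministic means. The graphical notation alluded to in the introduction, inspired by \cite{iso2014}, is tailored to precisely this task: each term is encoded by a decorated graph whose topological structure dictates both its typical size and its expected cancellations, allowing high moments to be evaluated by summing over graph isomorphism classes. A secondary difficulty is uniformity in $i,j,k$: the bounds must hold jointly over $O(N^3)$ triples, which forces us to control sufficiently high moments and therefore to expand to an order $K$ large enough for the union bound to close.
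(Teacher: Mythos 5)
Your overall strategy---iterate \eqref{eqn:resolv}, observe that $\tilGk$ is independent of the $k$th row, reduce to large-deviation bounds for multilinear forms in $\{h_{kl}\}_{l\neq k}$, and crush the remainder with a trivial operator-norm bound---is the same high-level architecture as the paper's. But there is a genuine gap in how you propose to control the higher-order terms, and a more subtle one you do not mention at all.

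The claim that ``the higher-order terms satisfy strictly smaller bounds by iterated application of the same inequality'' is not correct as stated. A term such as $\sumk_{l_1,l_2}\tilGk_{il_1}h_{kl_1}\tilGk_{l_1l_2}h_{kl_2}\tilGk_{l_2j}$ is a single multilinear form in the \emph{same} variables $\{h_{kl}\}$; you cannot apply a linear-form concentration inequality to the inner sum over $l_2$ ``with $l_1$ fixed'' and then again to the outer sum, because the two sums are not conditionally independent. What one actually has to do is compute a high moment of the entire multilinear form at once (this is what the graphical Propositions \ref{p:ld1} and \ref{p:ld2} do), and the honest outcome of such a moment computation is a bound of size $\xi(q^{-1}+(N\eta)^{-1/4})^{l}$ per term with $l$ summation indices --- note the $1/4$ power, not the $1/2$ power of your first-order estimate. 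The exponent is weaker precisely because pairings across the $2p$ copies of the product entangle the summation indices in a way that halves the gain from Ward's identity. This weaker bound is still $O(\xi\Psi)$ once $l\geq 2$, but it is not obtained by ``iterating'' a linear-form inequality.

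More seriously, for \eqref{e:er} the replacement of $\Gk$ by the expansion produces, beyond the quadratic form $\sumk_{ij}\tilGk_{ij}(h_{ki}h_{kj}-\delta_{ij}N^{-1})$ that you correctly isolate, a \emph{cubic} form
\[
\sumk_{i_1,i_2,i_3}h_{ki_1}\tilGk_{i_1i_2}h_{ki_2}\tilGk_{i_2i_3}h_{ki_3},
\]
and this term sits exactly at the borderline: the crude moment bound (Proposition \ref{p:ld1}, \eqref{eqn:hgh}) gives only $\xi(q^{-1}+(N\eta)^{-1/4})$, which is \emph{not} $O(\xi\Psi)=O(\xi(q^{-1}+(N\eta)^{-1/2}))$. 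This is precisely why the paper devotes a separate, more delicate combinatorial argument (Proposition \ref{p:ld2}, inequality \eqref{eqn:hghl3}, with the partition counting \eqref{e:rel_1}--\eqref{e:rel_3}) to recover the full $(N\eta)^{-1/2}$ power for this degree-three case. Your proposal does not address this cubic term at all---Hanson--Wright covers only the quadratic form---and without a separate argument for it, the proof of \eqref{e:er} does not close.
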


The proof of the local law is decomposed into two propositions.
\begin{proposition}\label{p:init}
With overwhelming probability, $\Xi(z)$ holds uniformly for $\eta \geq 2$.
\end{proposition}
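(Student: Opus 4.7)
The plan is to run the standard self-consistent-equation argument for the Stieltjes transform in the regime $\eta \geq 2$, where everything is trivially under control; this serves as the base case for the bootstrap down to the optimal scale. The key observation is that on $\DLz$, the operator bounds $\|G(z)\|, \|G^{(k)}(z)\|, \|\tilGk(z)\| \leq \eta^{-1} \leq 1/2$ hold deterministically. Since Lemma \ref{l:reg} gives $c_* \leq 1$, the inequality $1/2 \leq 2/c_*$ implies that $\Xik(z)$ holds pointwise, for every realization and every $z\in \DLz$. Hence Proposition \ref{p:er} applies unconditionally and yields $|\E_k(z)| \leq \xi\Psi(z)$ uniformly in $k$ and in $z\in\DLz$ with overwhelming probability.

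With this control, I would combine the Schur complement formula \eqref{eqn:schur}, which after the bookkeeping in \eqref{er} can be written as
\begin{equation*}
G_{kk}(z) = \frac{1}{-\sumk_i h_{ik} - z - \mN(z) - \E_k(z)},
\end{equation*}
with the a priori estimate of Proposition \ref{sum}, namely that $\frac{1}{N}\sum_k (-\sumk_i h_{ik} - z - \mfc(z))^{-1} = \mfc(z) + O(\xi/q)$ with overwhelming probability. Summing the Schur identity over $k$, dividing by $N$, and subtracting the estimate of Proposition \ref{sum}, the difference $\Delta := \mN - \mfc$ satisfies
\begin{equation*}
\Delta \;=\; \frac{1}{N}\sum_k \frac{\Delta + \E_k(z)}{\bigl(-\sumk_i h_{ik} - z - \mfc - \Delta - \E_k\bigr)\bigl(-\sumk_i h_{ik} - z - \mfc\bigr)} \;+\; O\!\left(\tfrac{\xi}{q}\right).
\end{equation*}
On $\DLz$ both denominators exceed $\eta - O(1) \geq 3/2$ in modulus for $N$ large (using $|\mfc|,|\Delta|,|\E_k|\ll 1$), so the prefactor of $\Delta$ on the right is at most $1/2$, and the equation self-closes to $|\Delta| = O(\xi\Psi + \xi/q) = O(N^{\nu-\beta} + N^{\nu - 1/2}) \ll (\log N)^{-1}$. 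Combined with the deterministic validity of every $\Xik(z)$, this establishes $\Xi(z)$.

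There is no real obstacle specific to this statement: it is the easy base case. The genuine difficulty is concentrated in Proposition \ref{p:er}, which is deferred to Section \ref{s:proof} and is where the correlations between the $k$-th row/column and the $k$-th minor must be controlled via the high-order resolvent expansion advertised in the introduction; and in the subsequent bootstrap, which will propagate this large-$\eta$ estimate down to $\eta \gtrsim N^{-1}$. Uniformity in $z\in\DLz$ poses no issue: the domain has polynomial size and, since $\eta \geq 2$, the resolvent and $\mfc$ are $O(1)$-Lipschitz there, so a standard polynomially-dense net combined with a union bound promotes the pointwise overwhelming-probability statements above to the required uniform one.
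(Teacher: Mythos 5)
Your argument is correct and follows essentially the same route as the paper: observe that $\Xik(z)$ holds deterministically for $\eta\geq 2$ because $|\tilGk_{ij}|\leq 1/\Im[z]\leq 1/2\leq 2/c_*$, invoke Propositions \ref{sum} and \ref{p:er}, subtract the two self-consistent expressions, and note that the denominators have imaginary part bounded below by a constant (so the stability factor is at most $1/2$), which closes the equation for $\mN-\mfc$. The only cosmetic difference is that the paper absorbs $\E_k$ into an $O(\xi\Psi)$ error before taking the difference, whereas you carry it along explicitly; this changes nothing.
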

\begin{proof}

Since $|G_{ij}|$, $|G_{ij}^{(k)}|$ and $|\tilde{G}^{(k)}_{ij}|$ are all bounded by $1/\Im [z]$, $\Xik(z)$ holds deterministically for any $z$ with $\eta \geq 2$. As for the error estimate of $m_N(z)$, we prove the following stronger result: for $\eta \geq 2$ we have,
\begin{align}\label{e:estm}
|\mN(z)-\mfc(z)|\leq \xi \Psi
\end{align}
with overwhelming probability.

For any $z$ with $\Im[z]\geq 2$, we denote the event ${\cal A}(z)$ such that \eqref{e:sum} and $\cap_{k=1}^{N+1}\{ |\E_k|\leq \xi \Psi\}$ hold. Then from Proposition \ref{sum} and \ref{p:er}, ${\cal A}(z)$ holds with overwhelming probability and so we can assume for the remainder of the proof that $\A(z)$ holds. 
Starting from the Schur complement formula \eqref{eqn:schur} we have,
\beqn
G_{kk}(z)=\frac{1}{-\sumk_{i}h_{ik} -z-\mN(z)-{\cal E}_k}
=\frac{1}{-\sumk_{i}h_{ik} -z-\mN(z)}+O(\xi\Psi ),
\eeqn
where we used the fact that $\Im [z] \geq 2$ to expand the denominator which is bounded away from $0$. After averaging over $k$ we get,
\begin{align}\label{mnerror}
\mN(z)=\frac{1}{N}\sum_{k}\frac{1}{-\sumk_{i}h_{ik} -z-\mN}+O(\xi\Psi )
\end{align}
We obtain the following by taking the difference of \eqref{mnerror} and \eqref{e:sum}
\begin{align}\label{e:inib}
\mN-\mfc=\frac{\mN-\mfc}{N}\sum_{k}\frac{1}{(-\sumk_{i}h_{ik} -z-\mN)(-\sumk_{i}h_{ik} -z-\mfc)}+O(\xi\Psi ).
\end{align}
The imaginary part of the denominators on the RHS of \eqref{e:inib} is bounded below by
\beqn
\Im [\sum_{j\neq k}h_{kj}+ z+\mN ] \geq 2, \qquad \Im [ \sum_{j\neq k}h_{kj} +z+\mfc ] \geq 2.
\eeqn
Therefore,
\begin{align}\label{e:stb}
\left|\frac{1}{N}\sum_{k}\frac{1}{(\sumk_{i}h_{ik} +z+\mN)(\sumk_{i}h_{ik} +z+\mfc)}\right|\leq \frac{1}{2}.
\end{align}
Combining \eqref{e:inib} and \eqref{e:stb} we obtain
\beqn
|\mN(z)-\mfc(z)|=O (\xi\Psi).
\eeqn
This finishes the proof.
\end{proof}

The following proposition is the key bootstrap principle.  The weak estimates of the event $\Xi (z)$ are used to prove stronger estimates on the Green's function and Stieltjes transform.
\begin{proposition}\label{p:better}  For $z \in \D_L$ the following holds with overwhelming probability on the event $\Xi(z)$. For the Stieltjes transform we have 
\begin{equation}
 | \mN (z) - \mfc (z) |\leq \xi \Psi  . \label{eqn:mbetter}
\end{equation}
Let $X_{ij}$ denote any of the resolvent entries 
$G_{ij}(z)$, $\Gk_{ij}(z)$ or  $\tilGk_{ij}(z)$.  We have,
\begin{equation}\label{eqn:Gbetter}
\left|  X_{ij}(z) - \frac{\delta_{ij}}{ - \sum^{(j)}_{l} h_{lj} - z - \mfc (z) } \right|\leq \xi \Psi .
\end{equation}
Recall  $\Psi$ is the control parameter introduced in \eqref{controlPsi}.
\end{proposition}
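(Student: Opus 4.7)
The plan is to derive a self-consistent equation for $\mN$ from the Schur complement formula, invoke the stability result Proposition~\ref{p:stab} to upgrade the a priori bound $|\mN - \mfc| \leq (\log N)^{-1}$ provided by $\Xi(z)$ to the optimal scale $\xi\Psi$, and then propagate this improvement to the individual resolvent entries via the standard resolvent identities.

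First I would apply the Schur complement formula \eqref{eqn:schur} to write $G_{kk} = (w_k - z - \mN - \E_k)^{-1}$, where I abbreviate $w_k := -\sumk_i h_{ik}$. On $\Xi(z)$, Lemma~\ref{l:reg} gives $\Im[\mN] \geq \Im[\mfc] - (\log N)^{-1} \geq c_*/2$, so the denominator has imaginary part bounded below by $c_*/2$. Proposition~\ref{p:er} gives $|\E_k| \leq \xi\Psi$ with overwhelming probability, and expanding the denominator to first order yields
\begin{equation*}
G_{kk}(z) = \frac{1}{w_k - z - \mN(z)} + O(\xi\Psi).
\end{equation*}
Averaging over $k$ and using $\mN = \tfrac{1}{N}\sum_k G_{kk}$ produces the self-consistent equation
\begin{equation*}
\mN(z) = \frac{1}{N}\sum_k \frac{1}{w_k - z - \mN(z)} + O(\xi\Psi).
\end{equation*}

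Next I would subtract the estimate of Proposition~\ref{sum} (using $\xi/q \leq \xi\Psi$) and factor, obtaining
\begin{equation*}
(\mN - \mfc)(1 - T) = O(\xi\Psi), \qquad T := \frac{1}{N}\sum_k \frac{1}{(w_k - z - \mN)(w_k - z - \mfc)}.
\end{equation*}
Let $T_0$ be the corresponding expression with $\mN$ replaced by $\mfc$. A direct computation using the lower bounds on the denominators gives $|T - T_0| \leq C|\mN - \mfc|$. Proposition~\ref{p:stab}, applied with $\epsilon = O(\xi/q)$ (which is smaller than $c_1$ for $N$ large, since $q \geq N^\beta$ and $\xi = N^\nu$ with $\nu$ arbitrarily small), yields $|1 - T_0| \geq c_*^2/16$. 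Combining with the a priori bound from $\Xi$, we obtain $|1 - T| \geq c_*^2/32$ for $N$ large, and dividing through proves \eqref{eqn:mbetter}.

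Substituting \eqref{eqn:mbetter} back into the expansion for $G_{kk}$ gives \eqref{eqn:Gbetter} for the diagonal entries of $G$. For off-diagonal $G_{ij}$ with $i\neq j$ I would apply the identity \eqref{eqn:gijid}, controlling $|G_{jj}|$ by the lower bound on its denominator and using \eqref{e:entryav} of Proposition~\ref{p:er} to bound the remaining sum, yielding $|G_{ij}|\leq C\xi\Psi$. The bounds for $\Gk_{ij}$ follow from \eqref{eqn:minorid} combined with the already-established lower bound on $|G_{kk}|$ (the correction is of order $(\xi\Psi)^2$), and the bounds for $\tilGk_{ij}$ then follow via \eqref{e:etry} of Proposition~\ref{p:er}. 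The main obstacle is the stability step: everything else is essentially algebraic once the probabilistic inputs of Proposition~\ref{p:er} are in hand, but the bootstrap crucially relies on the a priori bound $|\mN - \mfc| \leq (\log N)^{-1}$ packaged into $\Xi$ being sharp enough to keep $T$ in the regime where Proposition~\ref{p:stab} provides a quantitative lower bound on $|1 - T|$.
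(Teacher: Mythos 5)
Your argument follows the same route as the paper's: expand the Schur complement using the error bounds of Proposition~\ref{p:er}, average over $k$ to obtain a self-consistent equation, subtract Proposition~\ref{sum}, factor, and then invoke the stability from Proposition~\ref{p:stab} to divide through. Your decomposition $T = T_0 + (T-T_0)$ with $|T-T_0|\leq C|\mN-\mfc|$ is algebraically identical to the paper's splitting of the prefactor into $1 - \frac{1}{N}\sum_k (w_k - z - \mfc)^{-2} - \E$ with $|\E|\leq C/\log N$. The subsequent propagation of the bound to diagonal entries, off-diagonal entries, minor entries, and the $\tilGk$ entries is also handled the same way.

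The one genuine gap is that your argument only covers $z\in\DLo$, but the proposition must hold on all of $\D_L = \DLz\cup\DLo\cup\DLt$. Two steps fail outside $\DLo$. First, your justification for the denominator lower bound, $\Im[\mN]\geq\Im[\mfc]-(\log N)^{-1}\geq c_*/2$, relies on the inequality $\Im[\mfc]\geq c_*$, which Lemma~\ref{l:reg} asserts only on $\DLo$; on $\DLt$ the quantity $\Im[\mfc]$ is exponentially small and the argument must instead use that $|w_k| \leq \xi$ on the event of \eqref{e:b_entry} and $|\Re[z]|\geq 2\xi$, so $|w_k - z - \mN|\geq \xi - 2$, while on $\DLz$ one simply uses $\Im[z]\geq 2$. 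Second, Proposition~\ref{p:stab} is stated uniformly only for $z\in\DLo$, so the bound $|1-T_0|\geq c_*^2/16$ is not available on $\DLz$ or $\DLt$; there the lower bound on $|1-T|$ must come from the same domain-specific denominator bounds (on $\DLz$, $|w_k - z - \mN|$ and $|w_k-z-\mfc|$ both exceed $2$, forcing $|T|\leq 1/4$; on $\DLt$ both exceed $\xi-2$). Your proof is correct as written on $\DLo$, but you need a short case analysis over the three subdomains of $\D_L$ to make both the denominator bound and the stability step valid on all of $\D_L$.
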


\begin{proof} 

For any $z\in \D_L$, we define the event ${\cal A}(z)$ such that \eqref{e:sum}, \eqref{e:b_entry} and $\Xi(z)$ hold, and the following holds:
\beqn \label{eqn:biga}
\bigcap_{k=1}^{N+1}\left(\{\E_k(z)\leq \xi \Psi\}\cap \{\max_{i,j\neq k}|\Gk_{ij}(z)-\tilGk_{ij}(z)|\leq \xi \Psi\}\cap \{\max_{j\neq k}|  \sumk_j h_{kj} \Gk_{ji} | \leq \xi \Psi \}\right).
\eeqn
By Proposition \ref{sum} and \ref{p:er}, and our assumption,  ${\cal A}(z)$ holds with overwhelming probability on $\Xi(z)$, and so we need only to prove that on $\A(z)$, \eqref{eqn:mbetter} and \eqref{eqn:Gbetter} hold.

We claim that on the event $\A(z)$, we have the uniform bound for sufficiently large $N$,
\beqn
\left| - \sumk_i h_{ik} -z - \mN(z)  \right| \geq \min\{2, \frac{c_*}{2}\} > 0,
\eeqn
where $c_*$ is the constant from \eqref{eqn:immfc}.
If $z \in \DLz$ then $\Im[z] \geq 2$.  If $z \in \DLo$ then $\Im [ \mfc (z) ] \geq c_*$ and so for sufficiently large $N$, $\Im [ \mN (z) ] \geq c_*/2$ by the definition of $\A(z)$.  Lastly, if $z \in \DLt$ then by the definition of $\A(z)$
\beqn
\left| - \sumk_i h_{ik} -z - \mN  \right|  \geq |z| - \left| \sumk_i h_{ik}\right| - |\mN| \geq \xi -2\geq 2.
\eeqn

Therefore we may expand the Schur complement formula \eqref{eqn:schur} and obtain,
\begin{align}
\label{eqn:mN}\mN(z)=\frac{1}{N}\sum_{k}\frac{1}{-\sumk_{i}h_{ik} -z-\mN}+O (\xi \Psi ).
\end{align}
Again, by the definition of $\A(z)$ we have
\begin{align}
\label{eqn:mfc}\mfc(z)=\frac{1}{N}\sum_{k}\frac{1}{-\sumk_{i}h_{ik} -z-\mfc}+O(\xi\Psi ).
\end{align}
Taking the difference of \eqref{eqn:mN} and \eqref{eqn:mfc} we get 
\begin{align}\label{eqn:difm}
\left(1-\frac{1}{N}\sum_{k}\frac{1}{(-\sumk_{i}h_{ik} -z-\mfc)^2} -{\cal E}\right)(\mN-\mfc)=O(\xi \Psi)
\end{align}
where the error term is given by
\beqn
|{\cal E}|=\left|\frac{1}{N}\sum_{k}\frac{\mN-\mfc}{(-\sumk_{i}h_{ik} -z-\mN)(-\sumk_{i}h_{ik} -z-\mfc)^2}\right|\leq \frac{C}{\log N},
\eeqn
We used again that the denominator is bounded away from zero on $\D_L$. By Proposition \ref{p:stab} and the assumption that  \eqref{e:sum} holds on $\A$, we see that the prefactor in front of $(\mN - \mfc)$ in \eqref{eqn:difm} is bounded away from zero by a constant on $\DLo$. This is also true on $\DLz$ due to the fact that $\Im[z] \geq 2$, and is true on $\DLt$ by the fact that $|-\sumk_{i}h_{ik} -z-\mfc| \geq \xi -2$ on $\A(z)$. We conclude \eqref{eqn:mbetter}.

On the event $\A$ we have for the diagonal resolvent terms,
\begin{align}
G_{kk} &= \frac{1}{ - \sumk_i h_{ik} - z - \mfc (z) + ( \mN (z) - \mfc (z) ) + \E_k } \notag \\
&= \frac{1}{ - \sumk_i h_{ik} - z - \mfc (z) } + O(\xi \Psi ),
\end{align}
where we used \eqref{eqn:mbetter}.  For the off-diagonal resolvent entries we apply the identity \eqref{eqn:gijid} and obtain
\begin{align}
|G_{ik}| = |G_{kk} | \left| \sumk_j h_{kj} \Gk_{ji} \right| =O(\xi\Psi),
\end{align}
using the fact that $|G_{kk}|\leq 2/c_*$ and $| \sumk_j h_{kj} \Gk_{ji} |\leq \xi \Psi$ on $\A$ (recall \eqref{eqn:biga}).  For the entries of the $k$th minor we have by \eqref{eqn:minorid} and \eqref{eqn:gijid}
\beqn
|G_{ij} - \Gk_{ij} | = \left| \frac{G_{ik} G_{kj} }{ G_{kk} } \right| =\left| \sumk_j h_{kj} \Gk_{ji} \right| |G_{kj}| =O(\xi \Psi).
\eeqn
The estimates for $\tilGk_{ij}$ then follow from the definition \eqref{eqn:biga} of $\A$  on which we have $|\Gk_{ij}(z)-\tilGk_{ij}(z)|\leq \xi \Psi$.
\end{proof}

\noindent \emph{Proof of Theorem \ref{t:lclaw}.}  Once Propositions \ref{p:init} and \ref{p:better} have been established, the local law follows from a standard induction scheme as given in \cite{EKYYgeneral}.  We provide here a sketch for the sake of completeness.  Note that on $\D_L$ all the resolvent entries are Lipschitz in the parameter $z$ with constant less than $CN^2$.  It therefore suffices to work on the lattice
\beqn
\Lambda = \D_L \cap N^{-4} ( \zz + \i \zz )
\eeqn
which has cardinality $|\Lambda | \leq 8LN^{8}\xi$.  Fix now a sequence of sets of points $L_k = \{E + (2 -  jN^{-4})\i \in \Lambda: j\leq k\}$, and let $\A_k$ be the event that the conclusion of Theorem \ref{t:lclaw} holds uniformly for points in $L_k$. We know by Proposition \ref{p:init} that $\A_0$ holds with overwhelming probability. By the Lipschitz continuity of the resolvent entries, for any $z\in L_{k+1}$, the event $\Xi (z)$ holds on $\A_k$.  Hence, by Proposition \ref{p:better}, the event $\A_{k+1}$ holds with overwhelming probability on $\A_k$.  Hence fixing a large number $D>0$ we see by induction that 
\beqn
\pp [ \A_k^c] \leq \pp [ \A_{k-1}^c] + \pp [ \A_{k-1} \cap \A_k^c] \leq k N^{-D} + N^{-D} = (k+1)N^{-D}.
\eeqn
Hence,
\beqn
\pp [ \cap_{k=0}^M A_k ] \geq 1 - \sum_{k=0}^{N^4-1} (k+1) N^{-D} \geq 1 - N^{-D+8}.
\eeqn
 This yields the claim. \qed

\begin{proof}[Proof of Corollary \ref{cor:rig}]  
We want to apply the arguments of Section 7 of \cite{DBM} to conclude rigidity.  However, since the norm of $H$ is not bounded we need a slight modification of the argument.
Using the local law \eqref{e:locallaw}, we apply \cite[Lemma 7.17]{DBM} with $E_1 = -2\xi$ and $E_2 \in (-L/2, L/2)$ and obtain
\begin{equation}\label{e:bound1}
\left|\int_{E_1}^{E_2}  \rhofc (E)\d E - \int_{E_1}^{E_2} \rho_N (E)\d E \right| \leq C\xi^2q^{-1},
\end{equation}
with overwhelming probability, where $\rho_N$ the empirical eigenvalue distribution of $H$. 
We also have that $\rho_N$ is supported in $[-2\xi,2\xi]$ with overwhelming probability.  By letting $\Im[z]$ goes to zero on both sides of \eqref{e:mfc}, we get
\begin{align}
\sqrt{2\pi} &= \int_{\RR} \frac{ e^{-x^2/2}\d x }{ ( x - E- \mathrm{Re} [ \mfc (E) ] )^2 + \pi ^2\rhofc (E)^2 } \notag \\
&\leq \int_{|x-E|\leq |E|/2} \frac{ e^{-x^2/2}\d x }{ \pi ^2\rhofc (E)^2 }  + \int_{ |x - E| > |E|/2} \frac{e^{-x^2/2}\d x } { ( x - E - \mathrm{Re} [ \mfc (E) ] )^2 } \notag \\
& \leq  \frac{2e^{-E^2/8}}{\pi^2|E|\rhofc(E)^2}+ \frac{4}{ (|E|-2)^2}
\end{align}
where we used that $|\mfc|\leq 1$ from \eqref{eqn:immfc}. We therefore have that $\rhofc(E)\leq e^{-E^2/8}$ for sufficiently large $|E|$, and therefore for any $E \leq E_1$.  Therefore,
\begin{equation}\label{e:bound2}
\int_{-\infty}^{E_1} \rhofc (E)\d E \leq N^{-D}
\end{equation}
for any $D>0$.  Combining \eqref{e:bound1} and \eqref{e:bound2} we get
\beqn
\left |\int_{-\infty}^{E_2} \rhofc (E)\d E- \int_{-\infty}^{E_2}  \rho_N(E) \d E\right| \leq C\xi^2q^{-1}
\eeqn
By a standard argument (see, e.g., \cite[Theorem 2.2]{EYYrig}), we obtain the following rigidity estimate for the bulk eigenvalues.  For any index $i$ such that $\gamma_i\in[-L/2, L/2]$, we have
\beqn
 |\lambda_i(H)-\gamma_i|\leq C\xi^2q^{-1}.
\eeqn
\end{proof}
The proofs of Corollary \ref{cor:del} from Theorem \ref{t:lclaw} is standard and we omit the details. This argument can be found, for example, in \cite[Theorem 2.16]{EKYYSparse2}.

\subsection{Proof of Proposition \ref{p:er}}\label{s:proof}
We show that Proposition \ref{p:er} follows from the following large deviation estimates.

\begin{proposition}\label{p:ld1} 
 For any $z\in \D_L$, the following holds with overwhelming probability: for $l \geq 1$ and $j \neq k$ we have,
\begin{equation} \label{eqn:hg}
\1_{\Xik(z)} \sumk_{i_1, ... i_l} h_{k i_1 } \tilGk_{i_1, i_2} h_{k i_2 } \cdots h_{k i_l } \tilGk_{i_l j } \leq \xi  \left( \frac{1}{q} + \frac{1}{ ( N \eta)^{1/4} } \right)^{l},
\end{equation}
and for $l\geq 1$ and $i, j \neq k$
\begin{equation} \label{eqn:ghg}
\1_{\Xik(z)} \sumk_{i_1, ..., i_l} \tilGk_{i i_1} h_{k i_1} \tilGk_{i_1 i_2} h_{k i_2} \cdots h_{k i_l } \tilGk_{i_l j } \leq \xi   \left( \frac{1}{q} + \frac{1}{ ( N \eta)^{1/4} } \right)^{l}.
\end{equation}
For $l \geq 3$ we have
\begin{equation} \label{eqn:hgh}
\1_{\Xik(z)} \sumk_{i_1, ..., i_l} h_{k i_1} \tilGk_{i_1 i_2} h_{k i_2} \cdots \tilGk_{i_{l-1} i_l } h_{k i_l } \leq \xi   \left( \frac{1}{q} + \frac{1}{ ( N \eta)^{1/4} } \right)^{l-2}.
\end{equation}
\end{proposition}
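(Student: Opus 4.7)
Since the $k$-th row $\{h_{ki}\}_{i \neq k}$ is by construction independent of the minor $\tilHk$ (and hence of $\tilGk$), the natural approach is to condition on $\tilHk$ and treat each sum as a polynomial of degree $l$ in independent centered random variables satisfying the moment bounds \eqref{asup:moment}. On the event $\Xik$ the coefficient matrix $\tilGk$ satisfies the uniform bound $|\tilGk_{ab}| \leq 2/c_*$ together with the Ward identity $\sum_b |\tilGk_{ab}|^2 = \eta^{-1} \Im \tilGk_{aa} \leq C \eta^{-1}$, which are the two estimates that will drive all index summations.

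The plan is to establish each bound by a high-moment estimate followed by Markov's inequality. For a given sum $Q$ and large even $p$, expand $\EE[Q^{2p} \1_{\Xik}]$ as a sum indexed by $2pl$ row-indices. Centering and independence of the $h_{ki}$'s force every distinct row-index to appear with multiplicity at least two, and only such configurations contribute. Following the graphical bookkeeping of \cite{iso2014}, I would encode each surviving configuration as a decorated multigraph whose vertices are the distinct row-indices (labeled by multiplicity $r \geq 2$) and whose edges are $\tilGk$-entries linking consecutive $h$-factors in the chain expansion of $Q^{2p}$. Each vertex of multiplicity $r$ contributes $c_r N^{-1} q^{-(r-2)}$ by \eqref{asup:moment}; each edge contributes either $|\tilGk_{ab}| \leq 2/c_*$ or, when paired with another edge sharing a vertex, $C/\eta$ via the Ward identity.

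The key accounting is that each pair of $h$-factors can be absorbed in two ways: either by merging their row-indices into a single vertex of higher multiplicity (costing $q^{-2}$) or by summing a Ward-reducible pair of edges over a free index (costing $(N\eta)^{-1}$). Both savings are captured by $\Psi^2 = (q^{-1} + (N\eta)^{-1/4})^2$, so after matching up all $2lp$ factors of $h$ and counting the bounded number of graph topologies one obtains $\EE[Q^{2p} \1_{\Xik}] \leq (Cp)^{Cp} \Psi^{2lp}$. Choosing $p$ growing slowly with $N$ and applying Markov's inequality yields the overwhelming-probability bound $|Q| \1_{\Xik} \leq \xi \Psi^l$, with $\xi = N^\nu$ absorbing the polynomial combinatorial factors. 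The exponent $l-2$ rather than $l$ in \eqref{eqn:hgh} reflects the fact that the chain there begins and ends with an $h$ rather than a $\tilGk$, so the two boundary $h$-factors lack an adjacent $\tilGk$-entry to be paired into a Ward contraction, and therefore contribute two fewer factors of $\Psi$ than in \eqref{eqn:hg} or \eqref{eqn:ghg}.

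The main obstacle is the graphical combinatorics: one must verify uniformly in $l$ that no topology beats the worst-case scaling $\Psi^{2l}$ (respectively $\Psi^{2l-4}$), carefully tracking the combined effect of higher-multiplicity vertices and Ward-identity-reducible edge pairs, as well as free summed indices that must be closed off elsewhere in the graph. The argument parallels the moment estimates of \cite{iso2014} and \cite{EKYYSparse1}, with the additional subtlety that the coefficient matrix $\tilGk$ is itself random (though independent of $\{h_{ki}\}_i$) and satisfies its entrywise bounds only on the event $\Xik$, which forces us to carry the indicator $\1_{\Xik}$ through the entire moment expansion.
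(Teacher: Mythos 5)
Your proposal follows the same strategy as the paper: compute a high $2p$-th moment, exploit the independence of the $k$-th row from $\tilde G^{(k)}$ to reduce to partitions of the $2pl$ summation indices with blocks of size $\geq 2$, encode the resulting index sums as graphs in the spirit of \cite{iso2014}, and bound each graph contribution via the entrywise bound on $\tilde G^{(k)}$ (from $\Xi^{(k)}$) together with the Ward identity applied along a spanning tree. Two small imprecisions in the sketch are worth noting but do not undermine the argument: the Ward-identity saving is $(N\eta)^{-1/2}$ per freely summed index, not $(N\eta)^{-1}$ per pair of $h$-factors (your stated $\Psi^2$ formula is nonetheless the correct one); and your heuristic for the loss of two powers in \eqref{eqn:hgh} in terms of unpaired boundary $h$-factors should really be made precise as a count of connected components of $\G(\P)$ (the paper's graphs for \eqref{eqn:hgh} start as $2p$ disjoint chains with no white root, so each component forfeits one Ward contraction, and optimizing $q^{-(2lp-2|\P|)}(N\eta)^{-(|\P|-r)/2}$ over partitions with $r\le 2p$ components yields the exponent $l-2$).
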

We have better estimates for smaller $l$.
\begin{proposition}\label{p:ld2} 
 For any $z\in \D_L$, the following holds with overwhelming probability: for $j \neq k$ we have,
\begin{equation} \label{eqn:hgl1}
\1_{\Xik(z)} \sumk_{i} h_{k i } \tilGk_{i, j}  \leq \xi  \left( \frac{1}{q} + \frac{1}{ ( N \eta)^{1/2} } \right),
\end{equation}
and for $i, j \neq k$
\begin{equation} \label{eqn:ghgl1}
\1_{\Xik(z)} \sumk_{i_1} \tilGk_{i i_1} h_{k i_1} \tilGk_{i_1 j}  \leq \xi   \left( \frac{1}{q} + \frac{1}{ ( N \eta)^{1/2} } \right).
\end{equation}
We have also
\begin{equation} \label{eqn:hghl2}
\1_{\Xik(z)} \sumk_{i,j} \tilGk_{ij}\left(h_{k i} h_{k j} -\delta_{ij}N^{-1}\right) \leq \xi   \left( \frac{1}{q} + \frac{1}{ ( N \eta)^{1/2} } \right),
\end{equation}
and
\begin{equation} \label{eqn:hghl3}
\1_{\Xik(z)} \sumk_{i_1, i_2, i_3} h_{k i_1} \tilGk_{i_1 i_2} h_{k i_2} \tilGk_{i_{2} i_3 } h_{k i_3 } \leq \xi   \left( \frac{1}{q} + \frac{1}{ ( N \eta)^{1/2} } \right).
\end{equation}
\end{proposition}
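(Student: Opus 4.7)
\textbf{Proof proposal for Proposition \ref{p:ld2}.} The fundamental observation is that $\tilHk$, and therefore $\tilGk$, is measurable with respect to $\sigma(h_{ij}:i,j\neq k)$ and is hence independent of the column $\{h_{ki}\}_{i\neq k}$. The plan for all four estimates is uniform: condition on $\tilHk$, exploit on $\Xik$ the deterministic bound $|\tilGk_{ij}|\leq 2/c_*$ together with the Ward identity $\sumk_i|\tilGk_{ij}|^2=\Im[\tilGk_{jj}]/\eta\leq C/\eta$, compute a conditional moment of large even order $2p$, and apply Markov's inequality to upgrade the tail bound to overwhelming probability (using that $\xi=N^{\nu}$ tolerates any polynomial in $\log N$). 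The moment assumption \eqref{asup:moment} is precisely what yields the sparse correction $\xi/q$.

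For the linear estimates \eqref{eqn:hgl1} and \eqref{eqn:ghgl1} I write the quantity as $Y=\sumk_i a_i h_{ki}$ with $\tilHk$-measurable coefficients: in \eqref{eqn:hgl1} we have $a_i=\tilGk_{ij}$ and in \eqref{eqn:ghgl1} (after renaming $i_1\to i$) $a_i=\tilGk_{i'i}\tilGk_{ij}$. In both cases $\max_i|a_i|\leq C$ and $\sumk_i|a_i|^2\leq C/\eta$ hold on $\Xik$, the latter by factoring $\max_i|\tilGk_{i'i}|$ out and invoking Ward on $\sumk_i|\tilGk_{ij}|^2$. A standard Marcinkiewicz--Zygmund/Khintchine moment estimate for sums of independent centered variables whose $\ell$-th moments satisfy \eqref{asup:moment} yields, conditionally on $\tilHk$,
\[
\bigl(\EE\bigl[|Y|^{2p}\,\big|\,\tilHk\bigr]\bigr)^{1/(2p)}\leq C_p\Bigl(\bigl(\sumk_i a_i^2/N\bigr)^{1/2}+\max_i|a_i|/q\Bigr)\leq C_p\bigl(1/\sqrt{N\eta}+1/q\bigr),
\]
and Markov with $p$ sufficiently large gives the claim. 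For the quadratic form \eqref{eqn:hghl2} set $Z=\sumk_{ij}\tilGk_{ij}(h_{ki}h_{kj}-\delta_{ij}/N)$; Ward gives $\sumk_{ij}|\tilGk_{ij}|^2=\sumk_i\Im[\tilGk_{ii}]/\eta\leq CN/\eta$, and the same conditional $2p$-th-moment computation, which is a Hanson--Wright-type expansion adapted to the sparse setting, produces the bound $C_p(\|\tilGk\|_{HS}/N+\max|\tilGk_{ij}|/q)\leq C_p(1/\sqrt{N\eta}+1/q)$, again converted to overwhelming probability via Markov.

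The cubic estimate \eqref{eqn:hghl3} is the principal obstacle. Writing $W$ for the sum, its conditional mean $\EE[W\mid\tilHk]=\sumk_i(\tilGk_{ii})^2\EE[h_{ki}^3]$ is of order $1/q$ on $\Xik$ by \eqref{asup:moment} with $p=3$, and is absorbed into the target. The centered quantity $\tilde W=W-\EE[W\mid\tilHk]$ is treated by the moment method: I expand $\EE[\tilde W^{2p}\mid\tilHk]$ into a sum over partitions of the $6p$ indices into blocks along which the $h_{k\cdot}$ variables coincide, each block of size $\ell\geq 2$ contributing a factor $C q^{-(\ell-2)}/N$ from \eqref{asup:moment}, while the remaining resolvent sums are reduced by repeated application of Ward. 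Two extremal configurations drive the estimate: purely pairwise matchings ($\ell=2$ throughout) produce the $(N\eta)^{-1/2}$ contribution, whereas every partition containing an $\ell\geq 3$ block gains an additional $q^{-(\ell-2)}$ factor that is exactly what contributes the $1/q$ term. Controlling the combinatorics of these partitions, and ruling out apparent gains coming from odd-parity blocks of size $\geq 3$, is the main bookkeeping difficulty; the scheme parallels arguments developed for sparse Wigner matrices in \cite{EKYYSparse1,EKYYgeneral}, and a final application of Markov with $p$ chosen large depending on the constant $D$ in the definition of overwhelming probability completes the proof.
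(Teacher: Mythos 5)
Your treatment of \eqref{eqn:hgl1}, \eqref{eqn:ghgl1}, and \eqref{eqn:hghl2} matches the paper in spirit: the paper invokes the general large-deviation lemma for linear and quadratic forms with sparse entries (\cite[Lemma~A.1]{EKYYSparse1}), which is precisely the Marcinkiewicz--Zygmund/Hanson--Wright-type moment estimate you describe. No issue there.

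The cubic estimate \eqref{eqn:hghl3}, however, is exactly the one the paper proves from scratch because the cited lemma does not cover trilinear forms, and here your proposal has a genuine gap. You correctly identify the framework (conditional $2p$-th moments, partitions of the $6p$ indices with every block of size $\geq 2$, a factor $c_\ell/(q^{\ell-2}N)$ per block, Ward identity on the resolvent sums), and you correctly observe that all-pairs partitions produce the $(N\eta)^{-1/2}$ contribution while larger blocks contribute powers of $1/q$. But the estimate does not close without an explicit inequality linking the number of blocks $|\mathcal P|$ to the connectivity of the quotiented index graph, and you defer exactly this step as ``the main bookkeeping difficulty'', pointing to \cite{EKYYSparse1, EKYYgeneral} --- references that do not treat the cubic case. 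Concretely: after Ward-identity reduction the moment is bounded by $\max_{\mathcal P}\, q^{-(6p-2|\mathcal P|)}\,(N\eta)^{-(|\mathcal P|-r)/2}$, where $r$ is the number of connected components of the quotient graph, and one must show $|\mathcal P|+r\leq 4p$ to conclude. This is not a free fact: it rests on the structural observation that the original graph is a disjoint union of $2p$ three-vertex chains, so any component of the quotient whose preimage is a single chain forces a block of size $3$ (since no block has size $1$), which combined with counting the remaining components yields $r_1+2(r-r_1)\leq 2p$ and $3r_1+2(|\mathcal P|-r_1)\leq 6p$, and adding these gives $|\mathcal P|+r\leq 4p$. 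Without this inequality the ``worst-case'' partitions with many size-$2$ blocks but few Ward gains are not ruled out, and your bound does not follow. (Your centering step is also superfluous --- since the $h_{ki}$ are centered, any size-$1$ block already kills the expectation --- and it slightly complicates the partition combinatorics without benefit.)
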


\begin{proof}[Proof of Proposition \ref{p:er}.]In this proof we assume the event $\Xik(z)$ holds and omit from the notation. We define the following three quantities corresponding to \eqref{eqn:hg}, \eqref{eqn:ghg} and \eqref{eqn:hgh},
\begin{align}
\begin{split}
A_l(i,j):=& \sumk_{i_1, ... ,i_l } \tilGk_{i i_1} h_{k i_1} \tilGk_{i_1 i_2} ... h_{k i_l} \tilGk_{i_l j},\\
A_l(j):=&\sumk_{i_1, ... ,i_l }  h_{k i_1} \tilGk_{i_1 i_2}h_{k i_2} ... h_{k i_l} \tilGk_{i_l j},  \\
A_l:=&\sumk_{i_1, ... ,i_l }  h_{k i_1} \tilGk_{i_1 i_2}h_{k i_2} ... \tilGk_{i_{l-1}i_l}h_{k i_l} ,
\end{split}
\end{align}
for any $l\geq 1$.  In order to prove \eqref{e:etry} we iterate \eqref{eqn:resolv} and obtain
\begin{equation} \label{eqn:ben1}
\Gk_{ij} - \tilGk_{ij} = \sum_{l=1}^m A_l (i, j) + \sumk_n A_m (i, n) h_{k n} \Gk_{n j}.
\end{equation}
By \eqref{eqn:ghgl1} and \eqref{eqn:ghg} we have with overwhelming probability,
\beqn
A_1(i,j) \leq \xi \left(\frac{1}{q} + \frac{1}{ ( N \eta)^{1/2} }\right), 
\eeqn
and for $l\geq 2$, 
\beqn
A_l(i,j) \leq  \xi\left( \frac{1}{q} + \frac{1}{ (N \eta)^{1/4} } \right)^{l}.
\eeqn
In order to control the last term on the right-hand side of \eqref{eqn:ben1} we use the trivial bound $|\Gk_{nj}| \leq 1/\Im[z]\leq N$ on $\D_L$ and take $m$ large (but independent of $N$, say $m\geq 4(1/\nu+1)$, where $\nu$ is as in \eqref{e:defineXi}) and obtain
\beqn
|\Gk_{ij} - \tilGk_{ij}| \leq  \sum_{l=1}^m |A_l (i, j)|+ \frac{1}{N}\leq C\xi \left(\frac{1}{q} + \frac{1}{ ( N \eta)^{1/2} }\right)
\eeqn
with overwhelming probability.

In order to prove \eqref{e:entryav} we iterate \eqref{eqn:resolv} and obtain
\beqn
\sumk_i h_{ki} \Gk_{ij} =  A_1(j) + \sum_{l=2}^m A_l ( j) + \sumk_{n}A_{m}(n)h_{kn}\Gk_{nj}.
\eeqn
The first term $A_1(j)$ is controlled by \eqref{eqn:hgl1} and the terms in the summation are controlled by \eqref{eqn:ghg}.  For the last term we again use the trivial bound $|\Gk_{ij}| \leq N$ and take $m$ large.  This yields \eqref{e:entryav}. 

For \eqref{e:er} we write
\begin{align}
 \E_k=&\sumk_{ij} \Gk_{ij} ( h_{ki} h_{kj} - \delta_{ij} N^{-1} ) -(m_N-\mk)
 =  \sumk_{ij} \tilGk_{ij} ( h_{ki} h_{kj} - \delta_{ij} N^{-1} )\notag\\ 
 +& \sum_{l=3}^m A_{l}+ \sumk_{i_m,j}A_{m-1}(i_m)h_{ki_m}\Gk_{i_m j}h_{kj}- ( \mk - \tilmk ) - (\mN-\mk ).
\end{align}
The claim follows from \eqref{eqn:hgh}, \eqref{eqn:hghl2}, \eqref{eqn:hghl3}, \eqref{e:etry}, and the deterministic bound \cite[Lemma 6.5]{BGMCLTHeavyTail}
\beqn
|\mN(z)-\mk(z)|\leq (N\eta)^{-1}.
\eeqn
\end{proof}

For the proofs of Propositions \ref{p:ld1} and \ref{p:ld2} we introduce a graphical notation to encode terms appearing in our high moment estimates.  This is inspired by the methodology of \cite{iso2014}.  We require the following combinatorial definition and lemma.

\begin{definition} \label{def:graph}
We define a labeled colored undirected graph $\G$ to be a graph on $m$ black vertices $\{1,2,\cdots, m\}$ and on either $0$, $1$ or $2$ white vertices, labeled as $m+1$ and $m+2$ if they exist. Furthermore we demand that each white vertex is associated to a specific  index which we will denote by $i, j\in \{1,2,\cdots, N+1\}$. $\G$ may have multiple edges and self loops.  In this section we will refer to such labeled colored undirected graphs simply as graphs.
\end{definition}

We will use the graphs defined above to encode various summations of monomials of Green's function elements.  Black vertices will encode an index to be summed over, while white vertices will encode fixed indices that are not summed over.

Let $\G$ be a graph as defined in Definition \ref{def:graph}.  We associate to $\G$ and an index $k \in [[1, N+1]]$ the following sum,
\beqn
\V (\G, k) := \frac{1}{N^m} \sumk_{i_1, ..., i_m} \prod_{(a, b) \in \G} |\tilGk_{i_a i_b}(z) | .
\eeqn
When $a$ or $b$ equal the white vertex then $i_a$ and $i_b$ denote the associated index, as defined in Definition \ref{def:graph}. Note that since $|\tilGk_{ij}| = | \tilGk_{ji}|$ the above sum is well-defined even though $\G$ is undirected.

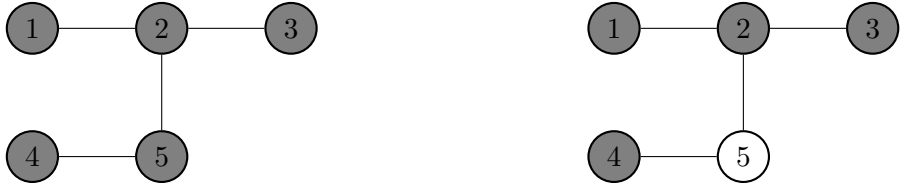
\begin{figure}[H]
\centering
\begin{minipage}[c]{0.5\textwidth}
\begin{center}
\begin{tikzpicture}[
blacknode/.style ={circle, draw=black, fill=black!50, thick},
whitenode/.style ={circle, draw=black, fill=white, thick},
]

\node[blacknode] (one) {1};
\node[blacknode] (two) [right=of one]{2};
\node[blacknode] (three) [right=of two] {3};
\node[blacknode] (four) [below =of one] {4};
\node[blacknode] (five) [right=of four] {5};

\draw[-] (one.east) -- (two.west);
\draw[-] (two.east) -- (three.west);
\draw[-] (four.east) -- (five.west);
\draw[-] (five.north) -- (two.south);

\end{tikzpicture}
\end{center}
\end{minipage}%
\begin{minipage}[c]{0.4\textwidth}
\begin{center}
\begin{tikzpicture}[
blacknode/.style ={circle, draw=black, fill=black!50, thick},
whitenode/.style ={circle, draw=black, fill=white, thick},
]

\node[blacknode] (one) {1};
\node[blacknode] (two) [right=of one]{2};
\node[blacknode] (three) [right=of two] {3};
\node[blacknode] (four) [below =of one] {4};
\node[whitenode] (five) [right=of four] {5};

\draw[-] (one.east) -- (two.west);
\draw[-] (two.east) -- (three.west);
\draw[-] (four.east) -- (five.west);
\draw[-] (five.north) -- (two.south);

\end{tikzpicture}
\end{center}
\end{minipage}
\caption{The graph on the left corresponds to the sum $ N^{-5} \sum^{(k)}_{i_1,\cdots, i_5} \tilde G_{i_1 i_2}^{(k)}\tilde G_{i_2 i_3}^{(k)} \tilde G_{i_2 i_5}^{(k)}\tilde G_{i_4 i_5}^{(k)}$.   If the white vertex appearing in the graph on the right is associated with the index $j$, then the graph corresponds to the sum 
$  N^{-4} \sum^{(k)}_{i_1,\cdots, i_4} \tilde G_{i_1 i_2}^{(k)}\tilde G_{i_2 i_3}^{(k)} \tilde G_{i_2 j}^{(k)}\tilde G_{i_4 j}^{(k)}$.}
\end{figure}

\begin{lemma}\label{L:graph_ineq}
Let $\G$ be a  connected graph with $m$ black vertices and $E$ edges.  If $\G$ has no white vertices then
\begin{equation}
\1_{\Xik} \V (\G, k) \leq C^{E} (N\eta)^{-(m-1)/2}. \label{eqn:nowhite}
\end{equation}
If $\G$ has at least one white vertex,  then
\begin{equation}
\1_{\Xik} \V (\G, k) \leq C^{E} (N\eta)^{-m/2}. \label{eqn:white}
\end{equation}
Above $C=\max\{2/c_*,1\}$, where $c_*$ is the constant in \eqref{def:Xi}.
\end{lemma}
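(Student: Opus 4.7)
The plan is to first use the deterministic bound $|\tilGk_{ij}|\leq 2/c_*$, valid on $\Xik$, to reduce the estimate to the case where $\G$ is a spanning tree, and then to exploit the Ward identity
\[ \sum_{b} |\tilGk_{ab}|^2 = \frac{1}{\eta}\Im[\tilGk_{aa}] \leq \frac{C}{\eta} \quad \text{on } \Xik \]
via iterated Cauchy-Schwarz to extract a factor of $(N\eta)^{-1/2}$ from each summation over an index corresponding to a black vertex. The split between the ``no white vertex'' and ``at least one white vertex'' cases reflects whether one black vertex must serve as an anchor for the tree or whether a white vertex can play that role.

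For the reduction, fix a spanning tree $T$ of $\G$ and bound each of the $E-|T|$ non-tree edges pointwise by $C$, so that $\V(\G,k)\leq C^{E-|T|}\V(T,k)$. It then suffices to bound $\V(T,k)$.

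For the tree estimate, root $T$ at a white vertex when one exists and at an arbitrary black vertex otherwise. Process non-root vertices in reverse breadth-first order, performing a summation over $i_v$ only for black $v$ (indices at white vertices are fixed and never summed). Thanks to the reverse ordering, at the moment $v$ is processed the factors in the product that depend on $i_v$ are exactly $|\tilGk_{i_v i_{x_j}}|$ where the $x_j$ range over the parent of $v$ and any white children of $v$; write $d\geq 1$ for the number of such $x_j$. Combining $|\tilGk|\leq C$ with Cauchy-Schwarz and the Ward identity gives
\[
\frac{1}{N}\sum_{i_v} \prod_{j=1}^{d} |\tilGk_{i_v i_{x_j}}| \;\leq\; C^{d-1}\,(N\eta)^{-1/2},
\]
where for $d\geq 2$ one uses $N\eta\geq 1$ on $\D_L$ to bound the stronger estimate $C^{d-1}(N\eta)^{-1}$ by $C^{d-1}(N\eta)^{-1/2}$. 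Since each tree-edge is absorbed exactly once by the endpoint that is processed later, the $C^{d-1}$ factors multiply to at most $C^{|T|}\leq C^E$.

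If the root is white, all $m$ black vertices are processed, yielding $\V(T,k)\leq C^E(N\eta)^{-m/2}$. If no white vertex is present, only $m-1$ black vertices are processed; after their summation no $i_r$-dependent factors remain and the residual sum $\frac{1}{N}\sum_{i_r}1=1$ contributes the trivial factor from the root, giving $\V(T,k)\leq C^E(N\eta)^{-(m-1)/2}$. Combined with the reduction to the spanning tree, this proves the two cases of the lemma. The main subtlety is verifying that reverse BFS processing really does leave only the claimed set of $i_v$-dependent factors at each step, so that the Cauchy-Schwarz step produces a bound independent of the remaining summation variables. This works out cleanly because white vertices carry fixed indices that can be frozen throughout the argument, and because reverse BFS ensures every black descendant of $v$ has already been summed out by the time $v$ is processed.
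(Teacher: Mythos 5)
Your proof is correct, and it follows the same basic template as the paper: pass to a spanning tree by bounding non-tree edges by $C$, then peel off black vertices one at a time from the leaves, using Cauchy--Schwarz together with the Ward identity to extract a factor of $(N\eta)^{-1/2}$ per summed index.

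The one place you genuinely deviate is the two-white-vertex case. The paper replaces the spanning tree by a spanning \emph{forest} of two trees, each rooted at one of the white vertices, so that every processed black vertex has a unique ``parent'' edge and the Ward step is always the clean $d=1$ estimate. You instead keep a single spanning tree rooted at one white vertex and process black vertices in reverse BFS order; the price is that the black vertex adjacent to the second white vertex has $d\geq 2$ remaining neighbors when it is summed, and you absorb the resulting $(N\eta)^{-1}$ down to $(N\eta)^{-1/2}$ using $N\eta\geq 1$ on $\D_L$ (indeed $N\eta\geq \xi^3\geq 1$ there). Both give $(N\eta)^{-m/2}$, so the two are equivalent; the forest version is perhaps cleaner because it never needs the lower bound on $N\eta$, while yours avoids the split into two subtrees at the cost of a slightly wasteful step. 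A minor bookkeeping slip: for $d=1$ the Cauchy--Schwarz plus Ward step costs $C^{1/2}$, not $C^{d-1}=C^0$; the correct per-step exponent is $\max\{1/2,\,d-1\}\leq d$, and since $\sum_v d_v\leq |T|$ one still lands within $C^{E-|T|}\cdot C^{|T|}=C^E$, so the stated conclusion is unaffected. You also tacitly assume the spanning tree contains no white--white edge; if one is forced (e.g.\ a white--white bridge), it is never absorbed by a summation step, but it is bounded pointwise by $C$, costing at most one extra power of $C$, again within budget.
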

\begin{proof}
In the following calculations we assume that $\Xik$ holds and we omit it from the notation.  The reader should therefore keep in mind that $|\tilGk_{ij}(z)|\leq C$.

First suppose that $\G$ has no white vertices. We can assume that $m$ is at least $2$ or the statement is trivial.  Let $\T$ be a spanning tree of $\G$.  By a relabeling of the vertices, we can assume  that the vertex $m$ is a leaf of $\T$ and that the vertex $m-1$ is adjacent to vertex $m$. We will use the Cauchy-Schwarz inequality and the Ward identity \eqref{eqn:ward} to sum over the summation index associated with the black vertex $m$. Let $\T_1$ be the tree obtained by removing the vertex $m$ and the edge $(m-1, m)$ from $\T$.  We have,
\begin{align}
\V (\G, k ) &\leq C^{E-m+1} \V ( \T, k) 
= C^{E-m+1} \frac{1}{N^m} \sumk_{i_1, ..., i_m} \prod_{ (a, b) \in \T } | \tilGk_{i_a i_b } | \notag \\
&=C^{E-m+1} \frac{1}{N^{m-1}} \sumk_{i_1, ..., i_{m-1}} \prod_{ (a, b) \in \T_1 } | \tilGk_{i_a i_b } | \frac{1}{N} \sumk_{i_m} |\tilGk_{i_{m-1} i_m } | \notag \\
&\leq C^{E-m+1} \frac{1}{N^{m-1}} \sumk_{i_1, ..., i_{m-1}} \prod_{ (a, b) \in \T_1 } | \tilGk_{i_a i_b } | \left( \frac{1}{N} \sumk_{i_m} |\tilGk_{i_{m-1} i_m } |^2 \right)^{1/2} \notag \\
&\leq C^{E-m+2} \frac{1}{ ( N \eta )^{1/2} } \V (\T_1, k).
\end{align}
The second to last line follows from Cauchy-Schwarz and in the last line we have used the Ward identity,
\beqn
  \frac{1}{N} \sumk_{i_m} |\tilGk_{i_{m-1} i_m } |^2 =\frac{\Im [\tilGk_{i_{m-1}{i_{m-1}}}]}{N\eta}\leq \frac{C}{N\eta}. 
\eeqn

We can now continue this procedure with tree $\T_1$ which consists of $m-1$ black vertices $\{1,2,\cdots m-1\}$.  That is, we choose a leaf of the tree $\T_1$ and repeat the above procedure of summing over the summation index associated with the leaf, applying Cauchy-Schwarz and the Ward identity.  To be more precise, we can assume that the vertex $m-1$ is a leaf attached to the vertex $m-2$.  If we denote by $\T_2$ the subtree obtained by removing the vertex $m-1$ and edge $(m-2, m-1)$ we obtain,
\begin{align}
\V ( \T_1, k) &= \frac{1}{N^{m-1} } \sumk_{i_1, \cdots i_{m-1} } \prod_{(a, b) \in \T_1 } | \tilGk_{i_a i_b } | \notag \\
&= \frac{1}{N^{m-2} } \sumk_{i_1, \cdots i_{m-2} } \prod_{ (a, b) \in \T_2 } | \tilGk_{i_a i_b } | \frac{1}{N} \sumk_{i_{m-1} } | \tilGk_{i_{m-2} i_{m-1} } | \notag \\
&\leq  \frac{1}{N^{m-2} } \sumk_{i_1, \cdots i_{m-2} } \prod_{ (a, b) \in \T_2 } | \tilGk_{i_a i_b } |  \left( \frac{1}{N} \sumk_{i_{m-1} } | \tilGk_{i_{m-2} i_{m-1} } |^2 \right)^{1/2} \notag \\
& \leq C \frac{1}{ ( N \eta )^{1/2} } \V ( \T_2, k).
\end{align}
 This yields another factor of $C (N \eta)^{-1/2}$.  We then inductively repeat this procedure on the subtree $\T_2$ consisting of $m-2$ back vertices, and so on. Each summation removes a leaf vertex and the edge adjacent to it, and results in a factor $C(N\eta)^{-1/2}$. 

We get a sequence of subtrees $\T_1, \T_2,\cdots, \T_{m-1}$, where the $n$th tree $\T_n$ is obtained after the $n$th summation.  The last tree $\T_{m-1}$ consists only of the root vertex and clearly $\V ( \T_{m-1}, k)= 1$ by definition. We see that
\begin{align}
\V (\G, k )  &\leq C^{E-m+2} \frac{1}{ ( N \eta )^{1/2} } \V (\T_1, k) \leq  C^{E-m+3} \frac{1}{ ( N \eta) } \V ( \T_2, k) \leq \cdots   \notag \\
& \leq   \left(C(N\eta)^{-1/2}\right)^{m-1} \V (\T_{m-1}, k) \leq C^E (N\eta)^{-(m-1)/2}.
\end{align}
This finishes the proof of \eqref{eqn:nowhite}. Suppose that $\G$ has a single white vertex. We repeat the same procedure as above but choose the spanning tree to be rooted at the white vertex.  After summing over $m$ black vertices we will have
\beqn
\V ( \G, k) \leq C^E( N \eta)^{-m/2} \V ( \T_{m}, k)
\eeqn
where $\T_{m}$ consists of only the white vertex.  Clearly $\V(\T_m, k) = 1$ by definition and we get the second inequality \eqref{eqn:white} in this case.

Now we can assume that $\G$ has two white vertices. Instead of taking out a spanning tree we choose a spanning forest consisting of two disjoint trees $\T\cup \T'$ each of which is rooted at a white vertex.  We then have,
\beqn
\V(\G,k)\leq C^{E-m}\V(\T,k)\V(\T',k)\leq C^E (N\eta)^{-m/2},
\eeqn
where the second inequality follows from the argument in the case that $\G$ has a single white vertex.  This finishes the proof. 
\end{proof}

We will lastly need the following construction for forming new graphs by grouping together black vertices. 
\begin{definition} Let $\G$ be a colored graph with $m$ black vertices labeled $\{1, 2, ..., m\}$ and $0, 1$ or $2$ white vertices. Let $\P$ be a partition on $\{1, 2, ..., m\}$. Note that $\P$ induces a partition on the black vertices of $\G$.  The graph $\G ( \P )$ is formed as follows. 

Firstly, $\G(\P)$ has the same number of white vertices as $\G$ and the same associated index.  To each block of $\P$ we create one black vertex for $\G(\P)$.  We now define the edges of $\G ( \P)$.  We first construct a map $\phi : V ( \G) \to V (\G(\P))$ from the vertices of $\G$ to the vertices of $\G(\P)$.  The white vertices of $\G$ are just sent to the corresponding white vertices of $\G ( \P)$.  A black vertex of $\G$ is associated with a unique block of $\P$ - the block that its label lies in - and we map this vertex to the black vertex of $\G ( \P)$ corresponding to this block.  Then, for each edge $(a, b)$ of $\G$ we construct an edge $(\phi(a), \phi(b))$ of $\G(\P)$.
\end{definition}

While lengthy, the above definition is very simple.  The following picture illustrates the process:
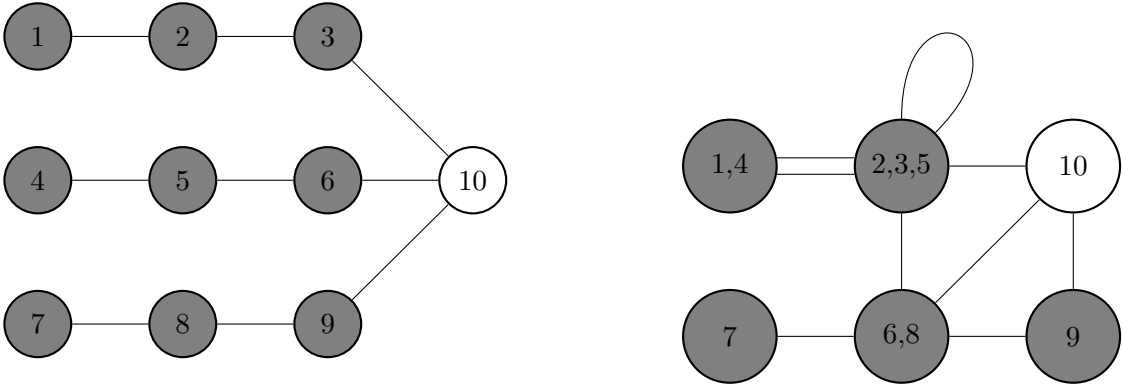
\begin{figure}[H]
 \centering
\begin{minipage}[c]{0.5\textwidth}
\begin{center}
\begin{tikzpicture}[
whitenode/.style ={circle, draw=black, fill=white, thick,minimum width =25pt},
blacknode/.style={circle, draw=black, fill=black!50, thick, minimum width=25pt},
]

\node[blacknode] (one) {1};
\node[blacknode] (two) [right=of one]{2}; 
\node[blacknode] (three) [right=of two] {3};
\node[blacknode] (four) [below= of one] {4};
\node[blacknode] (five) [right=of four] {5};
\node[blacknode] (six) [right= of five] {6};
\node[blacknode] (seven) [below=of four] {7};
\node[blacknode] (eight) [right= of seven] {8};
\node[blacknode] (nine) [right = of eight] {9};
\node[whitenode] (ten) [right=of six] {10};

\draw[-] (one.east) -- (two.west);
\draw[-] (two.east) --(three.west);
\draw[-] (four.east) --(five.west);
\draw[-] (five.east) -- (six.west);
\draw[-] (seven.east) -- (eight.west);
\draw[-] (eight.east) -- (nine.west);
\draw[-] (three.south east) --(ten.north west);
\draw[-] (six.east) -- (ten.west);
\draw[-] (nine.north east) -- (ten.south west);
 
\end{tikzpicture}
\end{center}
\end{minipage}%
\begin{minipage}[c]{0.5\textwidth}
\begin{center}
\begin{tikzpicture}[
whitenode/.style ={circle, draw=black, fill=white, thick, minimum width =35pt},
blacknode/.style={circle, draw=black, fill=black!50, thick, minimum width= 35pt},
]

\node[blacknode] (one) {1,4};
\node[blacknode] (two) [right=of one] {2,3,5}; 
\node[blacknode] (six) [below= of two] {6,8};
\node[blacknode] (seven) [left=of six] {7};
\node[blacknode] (nine) [right = of six] {9};
\node[whitenode] (ten) [right=of two] {10};

\draw[-] (one.10) -- (two.170);
\draw[-] (one.-10) -- (two.190);
\draw[-] (two) to[out=north east, in=north, looseness=10] (two);

\draw[-] (two.east) -- (ten.west);
\draw[-] (seven.east) -- (six.west);
\draw[-] (two.south) -- (six.north);
\draw[-] (six.east) -- (nine.west);
\draw[-] (nine.north) -- (ten.south);
\draw[-] (six.north east) -- (ten.south west);

\end{tikzpicture}
\end{center}
\end{minipage}
\caption{The graph on the left is the original graph $\cal G$ with $9$ black vertices and $1$ white vertices. The graph on the right is the new graph ${\cal G}(\cal P)$ after grouping together black vertices with respect to the partition ${\cal P}=\{\{1,4\}, \{2,3,5\}, \{6,8\}, 7,9\} \}$. }

\end{figure}

\begin{proof}[ Proof of Proposition \ref{p:ld1}.] In the following proof we work on the event $\Xik(z)$ and we omit it from the notation.  The reader should therefore keep in mind that $|\tilGk_{ij}(z)|\leq C$.  We will compute high order moments of the terms in all three expressions \eqref{eqn:hg}, \eqref{eqn:ghg} and \eqref{eqn:hgh}.  The claim then follows by a high-moment Chebyshev inequality.  

We begin with \eqref{eqn:hg}.  Its $2p$-th moment is
\begin{align}
\ee \left| \sumk_{i_1\cdots i_l} h_{k i_1 } \tilGk_{i_1, i_2} h_{k i_2 } \cdots h_{k i_l } \tilGk_{i_l j } \right|^{2p} &= \sumk_{i_1 \cdots i_{2pl}} \ee [ h_{k i_1} X_{i_1 i_2} \cdots X_{i_l j} h_{k i_{l+1} }X_{i_{l+1} i_{l+2} }
\cdots X_{i_{2lp} j} ], \label{eqn:unpart1}
\end{align}
where $X_{ij}$ denotes either $\tilGk_{ij}$ or its complex conjugate $(\tilGk_{ij})^*$. For us the distinction will not be important as we will be bounding the terms in the above expression by their absolute value.
Since $\tilGk$ is independent of the $k$-th row of $H$ we can first take the expectation over this row.  We then re-arrange the above expectation into all possible partitions of the summation indices $\{1, 2, \cdots, 2pl\}$.  Let $\P$ be such a partition.  Note that due to the fact that the $h_{ij}$ are independent centered random variables, the size of each block of $\P$ is at least $2$, otherwise the expectation will vanish. Therefore, \eqref{eqn:unpart1} can be bounded by
\begin{align}\label{eqn:unpart_bound}
&\ee \left| \sumk_{i_1\cdots i_l} h_{k i_1 } \tilGk_{i_1, i_2} h_{k i_2 } \cdots h_{k i_l } \tilGk_{i_l j } \right|^{2p} \notag \\
\leq &\sum_{\P}  \frac{C_{\P}}{ q^{2lp-2|\P|}}\left(\frac{1}{N^{|\P|}}\sumk_{\{i_1,i_2\cdots i_{2pl}\}\sim\P}\ee \left[ \left| X_{i_1 i_2} \cdots X_{i_l j} X_{i_{l+1} i_{l+2}}\cdots X_{i_{2lp}j} \right| \right] \right).
\end{align}
Above, the first sum is over all possible partitions of the set $\{1, 2, ..., 2pl\}$ such that the size of each block of $\P$ is at least $2$, and $|\P|$ denotes the number of blocks in $\P$. The constant $C_{\P}$ depends only on the partition $\P$. The second sum is over all the possible assignments of the indices $\{i_1,i_2\cdots i_{2pl}\}$ which induces the same partition as $\P$ of the set $\{1, 2, ..., 2pl\}$ (we recall that $\{i_1,i_2\cdots i_{2pl}\}$ induces a partition of the set $\{1, 2, ..., 2pl\}$, such that $a$ and $b$ are in the same block if and only if $i_a=i_b$).

We first construct the graph $\G$ with black vertices labelled by $1, 2, ..., 2lp$ and one white vertex associated with the index $j$. We create an edges between the vertices $(i, i+1)$ for $i=nl+1, nl+2, ..., (n+1)l-1$ and $n=0, 1,2, ...,2p-1$, and an edge between the black vertices $nl$ and the white vertex $j$, for $n=1, ..., 2p$.  The graph $\G$ represents the unpartitioned sum \eqref{eqn:unpart1}.  

To each partition $\P$ we then get a graph $\G = \G (\P)$ as defined above. By Lemma \ref{L:graph_ineq}, we see
\beqn
\frac{1}{N^{|\P|}}\sumk_{\{i_1,i_2\cdots i_{2pl}\}\sim\P}\ee [ \left| X_{i_1 i_2} \cdots X_{i_l j} X_{i_{l+1} i_{l+2}}\cdots X_{i_{2lp}j} \right| ]= \ee [\V ( \G ( \P ) , k )] \leq C^{2lp}(N\eta)^{-|\P|/2},
\eeqn
where we used the fact that since $\G$ is a connected graph, $\G(\P)$ is also a connected graph. The total number of partitions of the set $\{1,2,\cdots, 2pl\}$ is bounded by $(2pl)!$ and we get
\beqn
\ee \left| \sumk_{i_1\cdots i_l} h_{k i_1 } \tilGk_{i_1, i_2} h_{k i_2 } \cdots h_{k i_l } \tilGk_{i_l j } \right|^{2p} \leq C_{l,p} \max_{ \P} \frac{1}{ q^{2lp-2|\P|}} \frac{1}{ ( N \eta )^{|\P|/2}} \leq C_{l,p} \left( \frac{1}{q} + \frac{1}{ ( N \eta)^{1/4} } \right)^{2 l p}.
\eeqn
This yields $\eqref{eqn:hg}$.


Next we prove \eqref{eqn:ghg}.  We have,
\begin{equation}\label{eqn:unpart2}
\ee \left|  \sumk_{i_1\cdots i_l} \tilGk_{i i_1} h_{k i_1} \tilGk_{i_1 i_2} \cdots h_{k i_l } \tilGk_{i_l j } \right|^{2p} = \sumk_{i_1\cdots i_{2lp} } \ee [ X_{i i_1} h_{k i_1} \cdots X_{i_l j } X_{i i_{l+1} } h_{k i_{l+1} } \cdots X_{i_{2lp} j } ].
\end{equation}
where again we denote by$X_{ij}$ either $\tilGk_{ij}$ or its complex conjugate $(\tilGk_{ij})^*$.
We divide the sum up into partitions $\P$ of $\{ 1, 2, \cdots , 2lp \}$. We define the graph $\G$ representing the unpartitioned sum above as follows. It has $2lp$ black vertices and two white vertices if $i \neq j$ and one white vertex if $i=j$.  We create an edge between black vertices $(i, i+1)$ for $i = ln+1, ln+2, \cdots, (n+1)l-1$ and $n=0, \cdots, 2p-1$, an edge between the white vertex $i$ and black vertices $nl+1$, for $n=0,1,\cdots, 2p-1$, and an edge between the white vertex $j$ and black vertices $nl$ for $n=1, 2,\cdots , 2p$.  
We have that \eqref{eqn:unpart2} is bounded by
\beqn
\ee \left|  \sumk_{i_1, ..., i_l} \tilGk_{i i_1} h_{k i_1} \tilGk_{i_1 i_2} h_{k i_2} \cdots h_{k i_l } \tilGk_{i_l j } \right|^{2p}  \leq \sum_{\P} \frac{C_\P}{ q^{2lp - 2|\P|} } \V ( \G (\P ) , k )
\eeqn
where again the summation is over all partitions on $[[1, 2lp]]$ with each block of size at least $2$.  Applying \eqref{eqn:white} we obtain
\beqn
\ee \left|  \sumk_{i_1\cdots i_l} \tilGk_{i i_1} h_{k i_1} \tilGk_{i_1 i_2} h_{k i_2} \cdots h_{k i_l } \tilGk_{i_l j } \right|^{2p}  \leq C_{l,p}\max_{\P} \frac{1}{ q^{2lp - 2|\P|} } \frac{1}{ ( N \eta )^{ |\P|/2} } \leq C_{l,p} \left( \frac{1}{q} + \frac{1}{ ( N \eta)^{1/4} } \right)^{2 l p}
\eeqn
and we obtain \eqref{eqn:ghg}. 

Lastly, we prove \eqref{eqn:hgh}.  The $2p$-th moment is
\beqn
\ee \left| \sumk_{i_1\cdots i_l} h_{k i_1} \tilGk_{i_1 i_2} h_{k i_2} \cdots \tilGk_{i_{l-1} i_l } h_{k i_l } \right|^{2p} = \sumk_{i_1 \cdots i_{2lp} } \ee [ h_{k i_1} X_{i_1 i_2} \cdots h_{k i_l} h_{k i_{l+1} } X_{ i_{l+1} i_{l+2} } \cdots X_{ i_{2lp-1} i_{2lp} } h_{ k i_{2lp} } ].
\eeqn
Again we divide the sum up into partitions on $\{1,2, \cdots, 2lp \}$. We associate the following graph $\G$ to the unpartitioned sum.  $\G$ has $2lp$ black vertices and no white vertices.  There is an edge between $(i, i+1)$ for $i=nl+1,\cdots, (n+1)l-1$ and $n=0, .., 2lp-1$.  Construct the graph $\G ( \P )$ as above.  We have
\begin{equation}\label{eqn:nowhite_exp}
\ee \left| \sumk_{i_1\cdots i_l} h_{k i_1} \tilGk_{i_1 i_2} h_{k i_2} \cdots \tilGk_{i_{l-1} i_l } h_{k i_l } \right|^{2p} \leq \sum_{\P} \frac{C_\P}{ q^{ 2 l p- 2 |\P|}} \V ( \G ( \P ), k ).
\end{equation}
Let $\G_1, .., \G_r$ be the connected components of $\G(\P)$. Since $\G$ has $2p$ connected components we have $r\leq \min\{|\P|,2p\}$. Therefore by \eqref{eqn:nowhite}, we have
\beqn
\V ( \G(\P), k) = \prod_{i} \V ( \G_i, k) \leq C^{2p(l-1)} \prod_i \frac{1}{ ( N \eta )^{( |\G_i | -1)/2 } }\leq  \frac{C^{2p(l-1)}}{ ( N \eta )^{ (|\P| - r)/2 }},
\eeqn
where $|\G_i|$ is the number of vertices of graph $\G_i$. We then see that \eqref{eqn:nowhite_exp} is  bounded by
\begin{align}
\ee \left| \sumk_{i_1, ..., i_l} h_{k i_1} \tilGk_{i_1 i_2} h_{k i_2} \cdots \tilGk_{i_{l-1} i_l } h_{k i_l } \right|^{2p}  &\leq \sum_{\P }\frac{C_\P}{ q^{ 2 pl -2 |\P|} } \frac{C^{2p(l-1)}}{ ( N \eta )^{ (|\P| - r )/2 } } \notag \\
&\leq C_{l,p} \left( \frac{1}{q} + \frac{1}{ ( N \eta)^{1/4} } \right)^{ 2p (l-2) }.
\end{align}
This completes the proof.
\end{proof}

\begin{proof}[Proof of Proposition \ref{p:ld2}] The bounds \eqref{eqn:hgl1}, \eqref{eqn:ghgl1} and \eqref{eqn:hghl2} follow from more general statements \cite[Lemma A.1]{EKYYSparse1}. 
We only prove \eqref{eqn:hghl3} here. The proof is similar to that of \eqref{eqn:hgh}, but with a more careful analysis of graphs.

The $2p$-th moment  of the lefthand side of \eqref{eqn:hghl3} is bounded by
\begin{align}\label{eqn:unpart_l3}
&\ee \left| \sumk_{i_1, i_2, i_3} h_{k i_1} \tilGk_{i_1 i_2} h_{k i_2} \tilGk_{i_{2} i_3 } h_{k i_3 }  \right|^{2p} \notag \\ 
\leq & \sum_{\P}  \frac{C_{\P}}{ q^{6p-2|\P|}}\left(\frac{1}{N^{|\P|}}\sumk_{\{i_1,i_2\cdots i_{6p}\}\sim\P}\ee [ X_{i_1 i_2} X_{i_2i_3}X_{i_4i_5}\cdots X_{i_{6p-2}i_{6p-1}}X_{i_{6p-1}i_{6p}}]\right),
\end{align}
where the notations are the same as in \eqref{eqn:unpart_bound}, and the sum over $\P$ runs through all the partitions of $\{1,2,\cdots, 6p\}$ where the size of each block is at least two.

We construct the graph $\G$ corresponding to the unpartitioned sum in \eqref{eqn:unpart_l3}.  The graph $\G$ has no white vertices and black vertices labelled by $1, 2, ..., 6p$. We add  edges $(3n+1,3n+2)$ and $(3n+2,3n+3)$ for $n=0,1,2,\cdots, 2p-1$. The graph $\G$ consists of $2p$ connected components and each component is a chain of three vertices and two edges. 

For each partition $\P$ we construct the graph $\G (\P)$ as before. Let $\G_1, \G_2,\cdots, \G_{r}$ be the connected components of $\G(\P)$. Consider the inverse images of the components in the original graph $\G$.  By a relabelling, let the inverse image of each of the components $\G_1, \cdots \G_{r_1}$ consist only of a single chain of $\G$.  The inverse image of the components $\G_{r_1+1}, \cdots \G_r$ consists of at least two chains.  We have the relation 
\begin{align}\label{e:rel_1}
r_1+2(r-r_1)\leq 2p,
\end{align}
since there are in total $2p$ chains in $\G$. 

Each chain consists of three vertices and we know that each block of $\P$ is of size at least two.  Therefore, for any $1\leq i\leq r_1$, the component $\G_i$ consists of a single vertex which corresponds to one block of the partition $\P$ consisting of exactly three numbers. 
Since the other blocks of $\P$ are of at least size two, we have the relation
\begin{align}\label{e:rel_2}
3r_1+2(|\P|-r_1)\leq 6p.
\end{align}
Adding the inequalities \eqref{e:rel_1} and \eqref{e:rel_2} yields,
\begin{align}\label{e:rel_3}
|\P|+r\leq 4p.
\end{align}
The same argument as in the proof of \eqref{eqn:ghg} yields
\beqn
\ee \left|\sumk_{i_1, i_2, i_3} h_{k i_1} \tilGk_{i_1 i_2} h_{k i_2} \tilGk_{i_{2} i_3 } h_{k i_3 }  \right|^{2p}  \leq C_{l,p}\max_{\P }\frac{1}{ q^{ 6p -2 |\P|} } \frac{1}{ ( N \eta )^{ (|\P| - r )/2 } } .
\eeqn
If $|\P|\leq 2p$ then the righthand side of above expression is bounded by $C_{l,p}q^{-2p}$ (recall $r \leq |\P|$). If $|\P|\geq 2p$ then we apply \eqref{e:rel_3} and obtain
\beqn
\frac{1}{ q^{ 6p -2 |\P|} } \frac{1}{ ( N \eta )^{ (|\P| - r )/2 } } \leq \frac{1}{ q^{ 2p+(4p -2 |\P|)} } \frac{1}{ ( N \eta )^{ (2|\P| - 4p )/2 } }\leq \left(\frac{1}{q}+\frac{1}{\sqrt{N\eta}}\right)^{2p}.
\eeqn
This finishes the proof. 
\end{proof}

\section{Universality of Eigenvalue statistics}\label{univsec}
In this section we prove Theorem \ref{t:uni}. We define the following matrix valued process $H_t$ for any random Laplacian-type matrix $H$.  The off-diagonal entries are given by $H_{ij}(t)=h_{ij}(t)$, $i\neq j$ where $h_{ij} (t)$ are independent Ornstein-Uhlenbeck process with initial data $h_{ij}$ satisfying
\begin{align}\label{eq:dynamic}
dh_{ij}(t)=\frac{d B_{ij}(t)}{\sqrt{N}}-\frac{1}{2}h_{ij}(t)dt,\quad 1\leq i<j\leq N+1,
\end{align}
where $\{B_{ij}\}_{1\leq i<j\leq N+1}$ are independent standard Brownian motions. The diagonal elements satisfy $H_{kk} (t) = - \sumk_{i} h_{ik}(t)$.  We have $H_0=H$ and the equality in law
\begin{align}\label{def:Ht}
H_t\stackrel{d}{=}e^{-t/2}H+(1-e^{-t})^{1/2}W,
\end{align}
where $W$ is a Gaussian Laplacian matrix as defined in \eqref{e:GLM}.  Furthermore, $W$ is independent from $H$. 

The matrices $H_t$ all have the trivial eigenvalue $0$ and corresponding eigenvector $\f e$.  This will destroy the eigenvalue universality at $0$ and so we must remove the trivial eigenvalue. 
We denote the nontrivial eigenvalues of $H_t$ as $\lambda_1(t)\leq \lambda_2(t)\cdots \leq \lambda_N(t)$ and corresponding eigenvectors $u_1(t), u_2(t),\cdots, u_N(t)$. We will sometimes omit the time parameter for simplicity of notation. 
We introduce the following Green's function of the $N$ nontrivial eigenvalues and eigenvectors.  It is defined by
\beqn
\hatG (z, H_t):=\sum_{i=1}^{N}\frac{u_i(t)u_i(t)^*}{\lambda_i(t)-z}=R(R^*H_tR-z)^{-1}R^* ,
\eeqn
where $R$ is the matrix defined in Section \ref{s:toy}.
\begin{remark}\label{r:dif}
Clearly $\hatG (z, H_t)$ differs from the usual Green's function by only a rank one matrix,
\begin{align}  \label{e:dif}
\hatG(z, H_t)-(H_t-z)^{-1}=\frac{1}{z} \f e \f e^*.
\end{align}
\end{remark}
We will need the following lemma.
\begin{lemma} \label{lem:Htdecomp}
The nontrivial eigenvalues of $H_t$ are distributed as the eigenvalues of the $N \times N$ matrix
\beqn
\hatH_t := \left( \e^{-t/2} R^* H R + ( 1 - \e^{-t} )^{1/2} ( R^* D R + g I_N ) \right) + ( 1 - \e^{-t} )^{1/2} GOE
\eeqn
where GOE represents an independent $N \times N$ matrix drawn from the Gaussian orthogonal ensemble, $D=\diag\{d_1,d_2,\cdots, d_{N+1}\}$ is a diagonal matrix, with $d_i$'s independent Gaussian random variables with variance $(N+1)/N$, and $g$ is a Gaussian random variable with variance $1/N$.  Denote the matrix
\beqn
\hatA_t := R^* ( \e^{-t/2} H + ( 1 - \e^{-t} )^{1/2} D )R + ( 1 - \e^{-t} )^{1/2} g I_N.
\eeqn
We then have with overwhelming probability uniformly for $z\in \D_L$,
\beq \label{eqn:Alclaw}
\left| m_{\hatA_t} (z) - \mfc (z) \right| \leq \xi \Psi,
\eeq
given that $t\leq \xi/q$, where the control parameter $\Psi$ is introduced in \eqref{controlPsi}.
\end{lemma}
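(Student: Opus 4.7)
The distributional identity follows from combining the defining relation $H_t \stackrel{d}{=} e^{-t/2} H + (1-e^{-t})^{1/2} W$ of \eqref{def:Ht} with Proposition \ref{p:decom_G}. Since $\f e$ is a common zero-eigenvector of every Laplacian-type matrix (including $H_t$), the nontrivial eigenvalues of $H_t$ coincide with the eigenvalues of $R^* H_t R = e^{-t/2} R^* H R + (1-e^{-t})^{1/2} R^* W R$. Applying Proposition \ref{p:decom_G} to the Gaussian Laplacian $W$, whose decomposition $R^* W R \stackrel{d}{=} \mathrm{GOE} + R^* D R + g I_N$ features mutually independent summands that are also independent of $H$, and collecting terms yields $\hat{H}_t = \hat{A}_t + (1-e^{-t})^{1/2}\,\mathrm{GOE}$.

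For the local law on $\hat{A}_t$, my plan is to first invoke Theorem \ref{t:lclaw} for $H_t$ itself. The Ornstein--Uhlenbeck dynamics \eqref{eq:dynamic} preserves the Laplacian-type structure, the variance $1/N$, and the moment bound \eqref{asup:moment}, so $H_t$ is again a random Laplacian-type matrix and $|m_{H_t}(z) - m_{fc}(z)| \leq \xi \Psi$ on $\D_L$ with overwhelming probability. By Remark \ref{r:dif}, $m_{H_t}$ and the Stieltjes transform of the nontrivial spectrum differ only by the trivial-eigenvalue contribution $(Nz)^{-1}$, which one easily checks is bounded by $\xi\Psi$ on each of $\D_L^{(0)}, \D_L^{(1)}, \D_L^{(2)}$, so the $N\times N$ matrix $\hat{H}_t$ satisfies $|m_{\hat{H}_t}(z) - m_{fc}(z)| \leq C\xi \Psi$. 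Writing $\hat{H}_t = \hat{A}_t + \sqrt{s}\,V$ with $V \sim \mathrm{GOE}$ independent of $\hat{A}_t$ and $s := 1 - e^{-t} \leq t \leq \xi/q$, the remaining task is to remove the small GOE summand.

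To pass from $\hat{H}_t$ to $\hat{A}_t$, I would use the approximate subordination principle for free additive convolution with the semicircle law. Conditioning on $\hat{A}_t$ and applying Gaussian integration by parts to the resolvent entries of $\hat{A}_t + \sqrt{s}\,V$, as in the local law analysis of deformed Wigner ensembles (\cite{LSdeformW, LSSYdeformWB}), one obtains, uniformly on $\D_L$ and with overwhelming probability,
\begin{equation*}
m_{\hat{H}_t}(z) \;=\; m_{\hat{A}_t}\!\bigl( z + s\, m_{\hat{H}_t}(z) \bigr) + O(\xi \Psi).
\end{equation*}
Substituting $m_{\hat{H}_t}(z) = m_{fc}(z) + O(\xi\Psi)$, using the Lipschitz bound of Lemma \ref{l:reg} to absorb the $O(\xi\Psi)$ argument perturbation, and Taylor expanding in the shift $s\,m_{fc}(z) = O(s) = O(\xi/q) = O(\xi\Psi)$, one arrives at $|m_{\hat{A}_t}(z) - m_{fc}(z)| \leq C\xi\Psi$ on $\D_L$, the reparameterization $z \mapsto z + s\,m_{fc}(z)$ being a bi-Lipschitz bijection for $s$ small.

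The hard part will be establishing the approximate subordination with the optimal $O(\xi\Psi)$ error uniformly down to the scale $\eta \gtrsim \xi^{3}N^{-1}$; this requires the just-established control on $m_{\hat{H}_t}$ together with careful Stein-identity bookkeeping conditional on $\hat{A}_t$. A conceptually simpler alternative, which sidesteps the subordination step entirely and also absorbs the $\sqrt{s}\,g\,I_N$ scalar shift (a harmless $O(\sqrt{s/N})$ spectral translation), is to apply the Schur-complement machinery of Section \ref{locallawsec} directly to the $(N+1)\times(N+1)$ matrix $\tilde{H}_t := e^{-t/2} H + \sqrt{s}\,D$: its off-diagonals $e^{-t/2} h_{ij}$ still have mean zero, variance $e^{-t}/N$, and satisfy \eqref{asup:moment} up to harmless constants, while its $k$th diagonal $-e^{-t/2}\sumk_i h_{ik} + \sqrt{s}\,d_k$ is a sum of independent centered variables of total variance $1 + O(t)$. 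Since $D$ is independent of $H$, the large-deviation estimates of Propositions \ref{p:ld1}--\ref{p:er} and the concentration of Proposition \ref{sum} carry over verbatim, and the resulting self-consistent equation differs from \eqref{e:mfc} only by $O(t) \leq O(\xi\Psi)$ corrections, which the stability analysis of Proposition \ref{p:stab} absorbs.
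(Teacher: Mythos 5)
Your distributional decomposition $\hat H_t = \hat A_t + (1-e^{-t})^{1/2}\,\mathrm{GOE}$ is exactly what the paper does, via Proposition \ref{p:decom_G}. For the local law on $\hat A_t$, you propose two routes; your second (``conceptually simpler alternative'') is precisely the paper's proof. The paper introduces $A^{(1)} := e^{-t/2}H + (1-e^{-t})^{1/2}D$, relates $m_{\hat A_t}$ to $m_{A^{(1)}}$ by the minor interlacing bound and the shift $m_{A^{(2)}}(z) = m_{A^{(1)}}(z-(1-e^{-t})^{1/2}g)$, and then reruns the Section \ref{locallawsec} machinery on $A^{(1)}$: McDiarmid concentration gives a modified Proposition \ref{sum} with the extra $(t/N)^{1/2}$ error from $D$, the large-deviation estimates carry over (with $e^{-t}h_{ki}h_{kj}$ contributing an extra $O(t)$ in \eqref{eqn:hghl2}), and stability absorbs the resulting $O(t)$ perturbation. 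That is the same plan as your second route, including the observation that the $\sqrt{s}\,g\,I_N$ term is a harmless spectral translation.

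Your first route — recover $m_{\hat A_t}$ from $m_{\hat H_t}$ by inverting an approximate subordination relation — is a genuinely different idea, but as written it has a gap. The map $z \mapsto w = z + s\,m_{\hat H_t}(z)$ \emph{increases} the imaginary part: $\Im[w] = \Im[z] + s\,\Im[m_{\hat H_t}(z)] \geq \Im[z] + c\,s$ on the bulk domain, where $c$ is of order $c_*$. Thus the image of $\mathcal{D}_L^{(1)}$ under the subordination map omits a bottom strip of imaginary-height $\sim s\,c_* \sim \xi/q$, which is much wider than the floor height $\xi^3 N^{-1}$ of $\mathcal{D}_L$ as soon as $q \ll N/\xi^2$. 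Since the subordination can only express $m_{\hat A_t}(w)$ in terms of $m_{\hat H_t}(z)$ with $\Im[z] < \Im[w]$, and the local law for $m_{\hat H_t}$ is established only for $\Im[z] \geq \xi^3 N^{-1}$, you do not get control of $m_{\hat A_t}$ at the smallest scales in $\mathcal{D}_L$ — exactly the scales needed for $\hat A_t \in \mathcal{A}$ and for the input to Theorem \ref{thm:abulk}. Fixing this would require an additional downward-propagation step (e.g.\ first deduce rigidity for $\hat A_t$ on the reachable subdomain, then reconstruct $m_{\hat A_t}$ at smaller $\eta$), or a slick monotonicity argument, neither of which you provide. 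The Stein-identity bookkeeping you flag as ``the hard part'' is indeed nontrivial, but the domain obstruction above is a separate and more structural issue that your sketch does not address; the paper's direct Schur-complement approach sidesteps it entirely.
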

\proof The first statement holds by Proposition \ref{p:decom_G}.  For the second statement, consider the auxillary matrixs
\beqn
A^{(1)} :=\e^{-t/2} H + (1 - \e^{-t} )^{1/2}D,\qquad A^{(2)} :=\e^{-t/2} H + (1 - \e^{-t} )^{1/2}( D+gI_{N+1}).
\eeqn
Since $\hatA_t=R^*A^{(2)}R$, which is the upper left $N\times N$ minor of $\hat{R}^* A^{(2)}R$, from the deterministic bound \cite[Lemma 6.5]{BGMCLTHeavyTail}, the Stieltjes transforms of empirical eigenvalue distributions of $A^{(2)}$ and $\hatA_t$ differ by at most $(N\eta)^{-1}$. Moreover, the Stieltjes transform of the empirical eigenvalue distribution of $A^{(2)}$ is a shift of that of $A^{(1)}$: 
\beqn
 m_{A^{(2)}} (z) =m_{A^{(1)}} (z-(1-e^{-t})^{1/2}g) .
\eeqn
Therefore, to prove \eqref{eqn:Alclaw}, we need to prove that uniformly for $z\in \D_L$ and $t\leq \xi/q$ we have with overwhelming probability,
\beq \label{newtoP}
\left|m_{A^{(1)}} (z-(1-e^{-t})^{1/2}g) - \mfc (z) \right| \leq \xi \Psi .
\eeq
For this, we claim that the proof of Theorem \ref{t:lclaw} holds for $A^{(1)}$.   We summarize here the changes.  We need to modify Proposition \ref{sum}.  Instead of \eqref{e:sum} we see that we obtain, by the same proof using the McDiarmid inequality, (the extra term $(t/N)^{1/2}$ on the righthand side results from the $d_k$'s) the bound
\beqn
\left| \frac{1}{N} \sum_k \frac{1}{ \sumk_i \e^{-t/2} h_{ik} + (1 - \e^{-t} )^{1/2} d_k- z - m_{\fc}^{(1)} (z) } - m_{\fc}^{(1)} (z) \right| \leq C\xi\left(\frac{1}{q}+\frac{\sqrt{t}}{\sqrt{N}}\right) ,
\eeqn
where $m_{\fc}^{(1)}(z)$ is the free convolution of the semicircle with a Gaussian of variance 
\beqn
 \EE\left[\left(\sumk_i \e^{-t/2} h_{ik} + (1 - \e^{-t} )^{1/2} d_k\right)^2\right]=1+\frac{1-e^{-t}}{N}.
\eeqn
More explicitly, $m_{\fc}^{(1)}$ is the unique solution to
\begin{align}\label{e:mfc1}
 m_{\fc}^{(1)}(z)=\int \frac{\rho_G(x)dx}{\left(1+(1-e^{-t})/N\right)^{1/2}x-z-m_{\fc}^{(1)}(z)}.
\end{align}
It's easy to check that the bounds of Lemma \ref{l:reg} also hold for $m_{\fc}^{(1)}$ after changing the constants a little.  

We can then run through the remainder of the proof of Theorem \ref{t:lclaw}, simply by replacing $h_{ij}$ by $e^{-t/2}h_{ij}$, and $G$, $G^{(k)}$ and $\tilde{G}^{(k)}$ by the corresponding quantities associated to $A^{(1)}$. The remainder of the proof is unchanged with the exception of the estimate \eqref{eqn:hghl2}. For the matrix $A^{(1)}$ we instead have the following estimate
\beqn
\1_{\Xik(z)} \sumk_{i,j} \tilGk_{ij}\left(e^{-t}h_{k i} h_{k j} -\delta_{ij}N^{-1}\right) \leq \xi   \left( \frac{1}{q} + \frac{1}{ ( N \eta)^{1/2} } \right)+Ct.
\eeqn
where the constant $C$ depends only on $c_*$ as in \eqref{def:Xi}.
The result of the above argument following the proof of Theorem \ref{t:lclaw} is the following local law for $A^{(1)}$.  We have the estimate,
\beq\label{e:er00}
\left| m_{A^{(1)}} (z)  - m_{\fc}^{(1)} (z) \right| \leq \xi \left(\Psi+(t/N)^{1/2}\right) +Ct
\eeq
with overwhelming probability uniformly for $z \in \D_L$. In order to conclude \eqref{newtoP}, we need an estimate on $|m_{\fc}^{(1)}(z-(1-e^{-t})^{1/2}g)-m_{\fc}(z)|$. For $m_{\fc}^{(1)}$, by \eqref{e:mfc1}, with overwhelming probability uniformly for $z\in \D_L$, we have
\begin{align}
 \notag&m_{\fc}^{(1)}(z-(1-e^{-t})^{1/2}g)\\
 \notag=&\int_{-\xi}^{\xi} \frac{\rho_G(x)dx}{\left(1+(1-e^{-t})/N\right)^{1/2}x+(1-e^{-t})^{1/2}g-z-m_{\fc}^{(1)}(z-(1-e^{-t})^{1/2}g)} +O(N^{-D})\\
 =&\int \frac{\rho_G(x)dx}{x-z-m_{\fc}^{(1)}(z-(1-e^{-t})^{1/2}g)}+O(\frac{\xi\sqrt{t}}{\sqrt{N}}).\label{e:apmfc1}
\end{align}
In the second quality we used the fact that with overwhelming probability $|g|\leq \xi/\sqrt{N}$ and that for $z \in \D_L$ and $-\xi\leq x\leq \xi$ 
the denominator is uniformly bounded away from $0$.  Using \eqref{e:apmfc1} and the defining relation \eqref{e:mfc} of $m_{\fc}(z)$ will yield the following estimate,
\begin{equation}\label{e:er01}
 \left|m_{\fc}^{(1)}\left(z-(1-e^{-t})^{1/2}g\right)-m_{\fc}(z)\right|\leq C\frac{\xi \sqrt{t}}{\sqrt{N}}.
\end{equation}
The proof is similar to the proof of Proposition \ref{p:better}; the argument is essentially the same as that of the proof of Lemma 3.6 of \cite{LSSYdeformWB} and we do not provide all the details.  By a continuity argument, one can first show \eqref{e:er01} for $z\in \D_L$ with $\Im[z]\geq 2$. Since $\mfc^{(1)}$ and $\mfc(z)$ are both Lipschitz and satisfy a similar self-consistent equation, one can prove \eqref{e:er01} for $z$ with slightly smaller imaginary part. By repeating this process, one can finally conclude \eqref{e:er01} for all $z\in \D_L$. 

Combining \eqref{e:er00} and \eqref{e:er01} yields
\beqn
 |m_{\hatA}(z)-m_{\fc}(z)|\leq \xi (\Psi+(t/N)^{1/2})+Ct,
\eeqn
and \eqref{eqn:Alclaw} follows since $t\leq \xi/q$.
\qed

We need the following notion of a derivative of a smooth function $F$ on the space of $(N+1)\times (N+1)$ real symmetric matrices that respects the structure of Laplacian-type matrices.  For $1\leq i<j\leq N+1$ we define the matrices $X_{ij}=E^{ij}+E^{ji}-E^{ii}-E^{jj}$, where $E^{ij}$ is the base matrix given by $(E^{ij})_{kl}=\delta_{ik}\delta_{jl}$.  For indices $1 \leq i < j \leq N+1$ we define
\beqn
\hatdel_{ij}F(H):=\lim_{h\rightarrow 0}\frac{F(H+hX_{ij})-F(H)}{h}.
\eeqn

Let $H$ be a Laplacian-type matrix as in \eqref{e:LM}.  We define the following deformation of $H$.  Given indices $1 \leq i < j \leq N+1$ and number $0 \leq \bm{\theta}^{ij} \leq 1$, 
 the deformed matrix $\bm{\theta}^{ij} H$ is defined by replacing $h_{kl}$ by $\theta^{ij}_{kl}h_{kl}$, for $1\leq k<l\leq N+1$ where $\theta_{kl}^{ij}=1$ unless $\{k,l\}=\{i,j\}$ in which case $\theta_{ij}^{ij}=\theta_{ji}^{ij} = \bm{\theta}^{ij}$.

The proof of Theorem \ref{t:uni} consists of two steps. First we prove that for small $t$, the eigenvalue statistics of the nontrivial eigenvalues of $H$ and $H_t$ are the same by a continuity argument. In the second step, we prove the universality of eigenvalue statistics for random Laplacian-type matrices with small Gaussian components; i.e. we prove eigenvalue universality for $H_t$ with small $t$. Thus, Theorem \ref{t:uni} follows.

\subsection{Short Time Comparison}
In this section we prove that the eigenvalue statistics of $H$ and $H_t$ are the same for small times $t$.
\begin{proposition}\label{p:short_t_comp}
Let $H$ be a random Laplacian-type matrix as in Definition \ref{def:H}. Let $H_t$ by defined as in \eqref{def:Ht}.  Choose $t= N^{-1+\epsilon}$, for sufficiently small $\epsilon\leq \beta/2$, where $\beta$ is from \eqref{defq}.  Then the gap statistics and local correlation functions of the nontrivial eigenvalues of $H_0$ and $H_t$ are the same.  More precisely, we first have
\beq \label{eqn:corcompar}
\lim_{N\to \infty} \int_{\rr^n} O ( \alpha_1, \cdots \alpha_n ) \left\{ \rho_{H_0}^{(n)} \left( E + \frac{ \alpha_1}{N} , \cdots ,  E + \frac{ \alpha_n}{N} \right) - \rho_{H_t}^{(n)} \left( E + \frac{ \alpha_1}{N} , \cdots ,  E + \frac{ \alpha_n}{N} \right) \right\} \d \alpha_1 \cdots \d \alpha_n = 0,
\eeq
for any test function $O \in C_c^\infty ( \rr^n)$.  Fix $\kappa >0$ and $k \in [[ k N , (1 - \kappa) N ]]$.  Then,
\beq \label{eqn:gapcompar}
\lim_{N \to \infty} \ee^{(H_t)} [ O ( N ( \lambda_{k+1}  - \lambda_k ) , \cdots N ( \lambda_{n+k} - \lambda_k ) ) ] - \ee^{(H_0)} [ O ( N ( \lambda_{k+1}  - \lambda_k ) , \cdots N ( \lambda_{n+k} - \lambda_k ) ) ] = 0,
\eeq
for $O \in C_c^\infty (\rr^n)$.
\end{proposition}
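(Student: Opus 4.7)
The plan is to implement a Green's function comparison along the matrix--level flow \eqref{eq:dynamic}, following the strategy of \cite{BoYa}. The key structural remark is that while Dyson Brownian motion viewed at the eigenvalue level requires the full DBM machinery, at the matrix level the entries $h_{ij}(t)$ are simply Ornstein--Uhlenbeck processes whose increments over $[0,t]$ have size $O(\sqrt{t/N})$ with overwhelming probability. For $t=N^{-1+\epsilon}$ this is $N^{-1+\epsilon/2}$, which, combined with the local law of Theorem \ref{t:lclaw} valid along the entire flow via Lemma \ref{lem:Htdecomp}, is small enough to control the flow derivatives. Note also that by Remark \ref{r:dif} the resolvent $\hatG(z,H_s)$ differs from $(H_s-z)^{-1}$ by a rank one matrix, so local law estimates transfer directly.

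First I would reduce both statistics to the comparison of smooth functionals of $\hatG(z,H_s)$ at scales $\Im z \sim N^{-1-\delta'}$ for a tiny $\delta' < \epsilon$. For the averaged $n$--point correlation functions in \eqref{eqn:corcompar}, this is the by--now standard representation of the density through $\Im\tr\hatG(E+\i\eta,H_s)$ smeared against a suitable smooth approximation of the indicator of a window, together with the rigidity of Corollary \ref{cor:rig} to truncate the edge. For the gap statistics \eqref{eqn:gapcompar}, I would follow the reduction used, e.g., in \cite{EKYYSparse2}, rewriting differences $\lambda_{k+j}-\lambda_k$ as smoothed eigenvalue counts in short windows, again expressible as smooth functionals of $\hatG$.

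Second, I would differentiate $\EE[F(H_s)]$ in $s$ by It\^o's formula applied to the OU evolution \eqref{eq:dynamic}, obtaining
\begin{equation}
\frac{d}{ds}\EE[F(H_s)]=\sum_{i<j}\EE\Bigl[\tfrac{1}{2N}\hatdel_{ij}^{\,2}F(H_s)-\tfrac{1}{2}h_{ij}(s)\,\hatdel_{ij}F(H_s)\Bigr],
\end{equation}
where $\hatdel_{ij}$ is precisely the structural derivative introduced before the proposition, automatically respecting the Laplacian constraint. The linear--in--$h_{ij}$ term is then handled by a cumulant expansion in $h_{ij}$: integration by parts against its law cancels the Gaussian contribution with the $\hatdel_{ij}^{\,2}$ term, leaving a remainder governed by higher cumulants $\kappa_{p+2}(h_{ij})=O(1/(Nq^p))$ paired with $\hatdel_{ij}^{\,p+1}F$. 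Each additional structural derivative produces, schematically, one extra off--diagonal factor from $\hatG$, which by Theorem \ref{t:lclaw} and the delocalization of Corollary \ref{cor:del} is bounded by $\xi\Psi$. Summation over the $O(N^2)$ pairs $(i,j)$ and a final integration $\int_0^t$ with $t=N^{-1+\epsilon}$ then yield an overall error of order $N^{\epsilon}\,(\xi\Psi)^{p-1}\,q^{-p}$, which is $o(1)$ provided $\epsilon\leq \beta/2$.

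The main obstacle I expect is the combinatorial bookkeeping forced by the Laplacian constraint: a single application of $\hatdel_{ij}$ simultaneously perturbs four matrix entries (the off--diagonal pair and the two associated diagonal entries), so the expanded derivatives of $F$ generate substantially more Green's function monomials than in the ordinary Wigner case, each of which must be shown to admit a uniform local--law bound. A secondary technical point is that one must propagate the local law uniformly in $s\in[0,t]$; but this is exactly what \eqref{eqn:Alclaw} in Lemma \ref{lem:Htdecomp} provides, since $t=N^{-1+\epsilon}\le \xi/q$ for the range of $\epsilon$ under consideration. Once these two issues are managed, the Green's function comparison closes and \eqref{eqn:corcompar}--\eqref{eqn:gapcompar} follow.
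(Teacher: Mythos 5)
Your proposal captures the right high-level strategy for \eqref{eqn:corcompar}: view the flow at the matrix level via the OU process \eqref{eq:dynamic}, apply It\^o's formula, and extract cancellations using the structural derivative $\hatdel_{ij}$ that respects the Laplacian constraint. This is indeed what the paper does through Lemma \ref{L:conti}. Two technical caveats, however. First, the paper does not carry out a full cumulant expansion: it Taylor-expands $h_{ij}\hatdel_{ij}F$ and $\hatdel_{ij}^{\,2}F$ around $h_{ij}=0$ to third order, cancels the zeroth- and first-order pieces against the $\tfrac{1}{2N}\hatdel_{ij}^2 F$ term, and bounds the remainder via the third moment $\EE|h_{ij}|^3 = O(1/(Nq))$; the hypothesis of Lemma \ref{L:conti} is expressed purely in terms of $\sup_{\bm\theta^{ij}}|\hatdel_{ij}^3 F|$. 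A cumulant expansion is heavier and not needed. Second, your claimed bound ``each derivative contributes a factor $\xi\Psi$'' is wrong at the scales where the Green's function comparison is actually run. Lemma \ref{lem:Gcorr} works at $\eta\in[N^{-1-\gamma},N^{-1}]$, below the optimal scale, where Theorem \ref{t:lclaw} and Corollary \ref{cor:del} do not apply. What one has there is only the weaker deterministic dilation bound of Proposition \ref{p:reso_bound}, giving $|\hatG_{ij}|=O(\xi^3 N^{\gamma})$ uniformly over the deformed matrices $\bm\theta^{kl}H_s$; the smallness must then come entirely from the prefactor $\EE|h_{ij}|^3\cdot N^2 \cdot t \sim t N/q$, not from Green's function smallness.

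The more serious gap is in your treatment of \eqref{eqn:gapcompar}. Rewriting individual eigenvalue gaps as ``smoothed eigenvalue counts in short windows'' does not give a smooth functional of $H$ on the full matrix space: $\lambda_i(H)$ is not differentiable where eigenvalues collide, and its derivatives with respect to $h_{ab}$ involve inverse gaps $1/(\lambda_i-\lambda_j)$ which a priori can be arbitrarily large. The paper's proof hinges on the level repulsion estimate (Proposition \ref{p:tlevp}), which controls
\[
Q_i(H)=\frac{1}{N^2}\sum_{j\neq i}\frac{1}{(\lambda_j(H)-\lambda_i(H))^2}.
\]
Level repulsion is itself obtained by running the three-step strategy on $\chi(Q_i(\cdot))$ with a cutoff $\chi$ truncating at $N^{2\tau}$: one first gets the estimate for $H_t$ from \cite{DBM}, then transfers it back to $H_0$ by Lemma \ref{L:conti}, and only \emph{then} can one apply Lemma \ref{L:conti} to $F(H)=O(N\lambda_i(H))\rho(Q_i(H))$ to compare gap statistics. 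You do not mention level repulsion anywhere, nor the regularization of $\lambda_i(H)$ by the $Q_i$-cutoff that makes $F$ smooth. Without this, the third structural derivatives of $F(H_s)=O(N\lambda_i(H_s))$ are not controlled, and the argument stalls. An additional subtlety the paper has to handle (and which your outline does not anticipate) is that delocalization from Corollary \ref{cor:del} is only available for eigenvectors whose eigenvalues lie in $[-L,L]$; the terms in $\hatdel_{ab}^3 Q_i$ involving distant eigenvalues must be bounded separately by exploiting the large denominators $|\lambda_i-\lambda_j|\geq L/2$.
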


We require the following estimate on the resolvent entries.
\begin{proposition}\label{p:reso_bound}
For any $s\geq 0$, we have with overwhelming probability the bound
\begin{align}\label{e:re_bound1}
\max_{ij}\sup_{\bm{\theta}^{kl}}|\hatG_{ij}(z, \bm{\theta}^{kl} H_s)| = O(1)
\end{align}
for $z\in\{z=E+i\eta: |E|\leq L, \xi^3N^{-1}\leq \eta\leq L\}$.
Fix $\gamma >0$.  We have with overwhelming probability uniformly in $s$
\begin{align}\label{e:reb}
\max_{ij}\sup_{\bm{\theta}^{kl}}|\hatG_{ij}(z, \bm{\theta}^{kl} H_s)| = O(\xi^3 N^{\gamma})
\end{align}
for $z\in\{z=E+i\eta: |E|\leq L, N^{-1-\gamma}\leq \eta\leq L\}$.
\end{proposition}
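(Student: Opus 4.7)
The strategy is to apply Theorem \ref{t:lclaw} to the ``undeformed'' matrix $H_s$ and then transfer the resulting estimates to $\bm{\theta}^{kl} H_s$ via the Sherman-Morrison-Woodbury formula. First, observe that $H_s$ itself is a random Laplacian-type matrix in the sense of Definition \ref{def:H}: by \eqref{def:Ht}, the entries $h_{ij}(s)$ are independent (up to symmetry), centered, and have variance $1/N$; the bound $q \leq N^{1/2}$ combined with a convexity inequality shows that the moment bound \eqref{asup:moment} is preserved (with possibly modified constants $c_p$) throughout the evolution. Hence Theorem \ref{t:lclaw} applies to $H_s$, yielding $|G_{ij}(z, H_s)| = O(1)$ for $z \in \DLo$ with overwhelming probability.

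Next, since $(\bm{\theta}^{kl} H_s - H_s) = (\bm{\theta}^{kl} - 1) h_{kl}(s)\, X_{kl} = \delta\, v v^*$ with $v = e_k - e_l$ and $\delta = -(\bm{\theta}^{kl}-1) h_{kl}(s)$ satisfying $|\delta| \leq |h_{kl}(s)| \leq \xi/q$ with overwhelming probability, Sherman-Morrison gives
\begin{equation}
G(z, \bm{\theta}^{kl} H_s) = G(z, H_s) - \frac{\delta\, G(z, H_s)\, v v^*\, G(z, H_s)}{1 + \delta\, v^* G(z, H_s) v}.
\end{equation}
On $\DLo$, the local law implies $v^* G(z, H_s) v = G_{kk} - 2 G_{kl} + G_{ll} = O(1)$ and $(G(z, H_s) v)_i = G_{ik} - G_{il} = O(1)$; combined with $|\delta| \leq \xi/q = o(1)$ this yields $|G_{ij}(z, \bm{\theta}^{kl} H_s) - G_{ij}(z, H_s)| = O(\xi/q)$, and hence $|G_{ij}(z, \bm{\theta}^{kl} H_s)| = O(1)$. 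Since $\hatG - G = z^{-1} \f e \f e^*$ contributes at most $1/(|z|(N+1)) = o(1)$ on $\DLo$, this establishes \eqref{e:re_bound1}. A union bound over $(k,l)$ and a covering/Lipschitz argument in $\bm{\theta}^{kl}$ and $z$ upgrades the pointwise estimate to the claimed uniform bound.

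For \eqref{e:reb}, the range $\eta \geq \xi^3 N^{-1}$ is already covered; only the regime $N^{-1-\gamma} \leq \eta < \xi^3 N^{-1}$ remains. Denote $\tilde H = \bm{\theta}^{kl} H_s$ with eigenpairs $(\tilde\lambda_m, \tilde u_m)$ (restricted to the nontrivial eigenvectors). From the previous paragraph, $\Im G_{ii}(E + i \eta_0, \tilde H) = O(1)$ for $|E| \leq L$ and $\eta_0 = \xi^3 N^{-1}$. The identity $\Im G_{ii}(\tilde\lambda_m + i \eta_0, \tilde H) \geq |\tilde u_m(i)|^2/\eta_0$ then yields the delocalization bound $\|\tilde u_m\|_\infty^2 \leq C \xi^3/N$ for all $m$ with $|\tilde\lambda_m| \leq L$ (using Theorem \ref{t:lclaw} on a slightly enlarged $\D_{L'}$ if needed). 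Moreover, by Weyl's inequality the rank-one perturbation shifts eigenvalues by at most $|\delta| = o(1)$, so Corollary \ref{cor:rig} (for $H_s$) transfers to a rigidity estimate for $\tilde H$. Using the spectral expansion $|G_{ij}(z, \tilde H)| \leq \sum_m |\tilde u_m(i) \tilde u_m(j)|/|\tilde\lambda_m - z|$ and splitting into $|\tilde\lambda_m| \leq 2L$ (where delocalization plus rigidity gives $\sum_m 1/|\tilde\lambda_m - z| = O(N \log N)$, so the contribution is $O(\xi^3 \log N)$) and $|\tilde\lambda_m| > 2L$ (where $|\tilde\lambda_m - z| \geq L$ and Cauchy-Schwarz gives $\sum_m |\tilde u_m(i) \tilde u_m(j)| \leq 1$, contributing $O(1)$), we obtain $|G_{ij}(z, \tilde H)| \leq C \xi^3 \log N \leq C \xi^3 N^{\gamma}$ for any $\gamma > 0$. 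The passage to $\hatG$ adds only $1/(|z|(N+1)) \leq N^\gamma$.

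The main obstacle is handling the uniformity in $s$ asserted in \eqref{e:reb}: Theorem \ref{t:lclaw} gives a pointwise-in-$s$ statement, whereas the conclusion must hold for all $s$ simultaneously with overwhelming probability. This is resolved by applying the pointwise result on a sufficiently dense grid of $s$-values (polynomial in $N$) and using the Lipschitz continuity of $\hatG(z, H_s)$ in $s$, which follows from the Ornstein-Uhlenbeck SDE \eqref{eq:dynamic} and the uniform bound $\|\hatG\| \leq \eta^{-1}$. A similar continuity/covering argument handles the sup over $\bm{\theta}^{kl}$ and $z$. All remaining steps are routine applications of the resolvent formalism and rigidity/delocalization technology already developed in this paper.
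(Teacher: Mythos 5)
Your proof of \eqref{e:re_bound1} matches the paper's in spirit: both start from the local law applied to the undeformed matrix $H_s$, and propagate the bound to the deformed matrix $\bm{\theta}^{kl}H_s$ via a rank-one resolvent perturbation identity. The paper writes the self-referential resolvent expansion $\hatG(\bm{\theta}^{kl}H_s) = \hatG(H_s) + (1-\theta^{kl}_{kl}) h_{kl}(s)\, \hatG(\bm{\theta}^{kl}H_s) X_{kl} \hatG(H_s)$ and absorbs the small self-referential term, whereas you solve it explicitly via Sherman--Morrison; both are fine.

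For \eqref{e:reb} you take a genuinely different route. The paper invokes a purely deterministic resolvent magnification inequality from \cite[Lemma 2.1]{BKYDRegular}, namely $\max_{ij}|\hatG_{ij}(E+\i\eta/M)| \leq M \max_{ij}|\hatG_{ij}(E+\i\eta)|$, and simply sets $M = \xi^3 N^\gamma$ — a one-line step with no extra probabilistic input beyond \eqref{e:re_bound1}. You instead perform a spectral decomposition and invoke eigenvector delocalization together with rigidity. This is more elaborate and requires additional machinery, but it is valid in principle and gives the same conclusion. One small slip: for $z = E + \i\eta$ with $\eta = N^{-1-\gamma}$ and $E$ in the bulk, the sum $\sum_m |\tilde\lambda_m - z|^{-1}$ is dominated by the nearest eigenvalue, which can contribute up to $1/\eta = N^{1+\gamma}$; rigidity gives $O(N^{1+\gamma})$ for this sum, not $O(N\log N)$. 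The contribution to the resolvent entry is then $\xi^3/N \cdot O(N^{1+\gamma}) = O(\xi^3 N^\gamma)$, which is exactly the claimed bound, so the final answer is unaffected. You also address the uniformity in $s$ via a grid/continuity argument, which the paper leaves implicit; this is sensible, though the path regularity of $s\mapsto H_s$ is H\"older rather than Lipschitz, so a little more care is needed in stating the covering argument. Overall the proposal is correct, with your treatment of \eqref{e:reb} being a heavier but workable alternative to the paper's deterministic magnification step.
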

\begin{proof}
We first prove \eqref{e:re_bound1} for the matrix $H_s$ without any deformation.  Since $H_s$ is a random Laplacian-type matrix in the sense of Definition \ref{def:H}, we see from Theorem \ref{t:lclaw} and \eqref{e:dif} that with overwhelming probability,
\beqn
\max_{ij}| \hatG_{ij}(z, H_s)|\leq \max_{ij}|(H_{s}-z)_{ij}^{-1}|+\frac{1}{(N+1)|z|}=O(1)
\eeqn
uniformly for any $z\in\{z=E+i\eta: |E|\leq L, \xi^3 N^{-1}\leq \eta\leq L\}$. For the deformed matrix $\bm{\theta}^{kl} H_s$, we have the resolvent identity,
\beqn
\hatG_{ij}(z,\bm{\theta}^{kl} H_s)=\hatG_{ij}(z, H_s)+(1-\theta_{kl}^{kl})h_{kl}(s)\left( \hatG(z, \bm{\theta}^{kl} H_s) X_{kl} \hatG(z, H_s)\right)_{ij}.
\eeqn
The matrix $X_{kl}$ has exactly four nonzero entries and with overwhelming probability we have $|h_{kl}(s)|\leq \xi/\log (N)q$. For $N$ large enough we therefore have with overwhelming probability,
\beqn
\max_{ij}\sup_{\bm{\theta}^{kl}}| \hatG_{ij}(z,\bm{\theta}^{kl} H_s)|\leq \max_{ij}| \hatG_{ij}(z, H_s)|+\frac{1}{2}\max_{ij}\sup_{\bm{\theta}^{kl}}| \hatG(z, \bm{\theta}^{kl} H_s)|,
\eeqn
and \eqref{e:re_bound1} follows from rearranging the above expression. 
The following deterministic relation is a consequence of \cite[Lemma 2.1]{BKYDRegular},
\beqn
\max_{ij}| \hatG_{ij}(E+i\eta/M, \bm{\theta}^{kl} H_s)|\leq M\max_{ij}| \hatG_{ij}(E+i\eta, \bm{\theta}^{kl} H_s)|.
\eeqn
We obtain \eqref{e:reb} by taking $M=\xi^3 N^{\gamma}$.
\end{proof}

We will need the following lemma which is a slight modification of \cite[Lemma A.2]{BoYa}.
\begin{lemma}\label{L:conti}
Let $H$ be a random Laplacian-type matrix.  
Define $H_t$ as in \eqref{def:Ht}. 
 Let $F$ be a smooth function on the space of $(N+1)\times (N+1)$ real symmetric matrices satisfying
\begin{align}\label{e:contib}
\sup_{0\leq s\leq t, \atop 1\leq i<j\leq N+1}\EE[(N^2|h_{ij}(s)|^3 +N|h_{ij}(s)|)\sup_{\bm{\theta}^{ij}}|\hatdel_{ij}^3F( \bm{\theta}^{ij} H_s)|]\leq B,
\end{align}
where the deformed matrix $\bm{\theta}^{ij} H_s$ is defined as above.  
Then,
\begin{align}{\label{diff}}
|\EE[F(H_t)]-\EE[F(H_0)]|\leq C tB.
\end{align}
\end{lemma}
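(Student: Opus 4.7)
The plan is to combine the Itô formula with a Taylor expansion and a moment-matching argument in the spirit of the Lindeberg replacement trick. Since $H_s$ depends on the OU entries $\{h_{ij}(s)\}_{i<j}$ precisely through the directions $X_{ij}$ (the $X_{ij}$ are exactly the infinitesimal increments that keep the matrix Laplacian-type), the derivative $\hat{\partial}_{ij}$ is the correct directional derivative to use. Applying Itô's formula to $F(H_s)$, using the SDE $dh_{ij}(s)=N^{-1/2}dB_{ij}(s)-\tfrac12 h_{ij}(s)\,ds$ and the quadratic variation $d\langle h_{ij}\rangle_s=ds/N$, the martingale part integrates to zero in expectation and one obtains
\begin{equation*}
\EE[F(H_t)]-\EE[F(H_0)]=\int_0^t \sum_{i<j}\EE\!\left[-\tfrac12 h_{ij}(s)\,\hat{\partial}_{ij}F(H_s)+\tfrac{1}{2N}\hat{\partial}_{ij}^2 F(H_s)\right]ds.
\end{equation*}

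Next, for each fixed pair $i<j$ and time $s$, I use the independence of $h_{ij}(s)$ from the remaining entries $\{h_{kl}(s):\{k,l\}\neq\{i,j\}\}$. Writing $H_s^{(ij,0)}:=H_s-h_{ij}(s)X_{ij}$, I Taylor expand in the scalar variable $h_{ij}(s)$:
\begin{align*}
\hat{\partial}_{ij}F(H_s)&=\hat{\partial}_{ij}F(H_s^{(ij,0)})+h_{ij}(s)\,\hat{\partial}_{ij}^2 F(H_s^{(ij,0)})+\tfrac{1}{2}h_{ij}(s)^2\,\hat{\partial}_{ij}^3 F(\bm{\theta}^{ij}H_s),\\
\hat{\partial}_{ij}^2 F(H_s)&=\hat{\partial}_{ij}^2 F(H_s^{(ij,0)})+h_{ij}(s)\,\hat{\partial}_{ij}^3 F(\bm{\theta}'^{ij}H_s),
\end{align*}
with $\bm{\theta}^{ij},\bm{\theta}'^{ij}\in[0,1]$. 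Conditioning on $H_s^{(ij,0)}$ and using $\EE[h_{ij}(s)]=0$, $\EE[h_{ij}(s)^2]=1/N$, the leading terms in the Itô integrand cancel exactly: the contribution $-\tfrac12\EE[h_{ij}(s)^2]\,\hat{\partial}_{ij}^2F(H_s^{(ij,0)})$ from the drift term kills the $\tfrac{1}{2N}\hat{\partial}_{ij}^2F(H_s^{(ij,0)})$ coming from the quadratic variation. What remains is an error controlled by the third derivative:
\begin{equation*}
\left|\EE\!\left[-\tfrac12 h_{ij}(s)\hat{\partial}_{ij}F(H_s)+\tfrac{1}{2N}\hat{\partial}_{ij}^2F(H_s)\right]\right|\leq C\,\EE\!\left[\left(|h_{ij}(s)|^3+\tfrac{1}{N}|h_{ij}(s)|\right)\sup_{\bm{\theta}^{ij}}|\hat{\partial}_{ij}^3F(\bm{\theta}^{ij}H_s)|\right].
\end{equation*}

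Summing over the $O(N^2)$ pairs $i<j$, the prefactors $N^2$ and $N$ emerge, and the hypothesis \eqref{e:contib} yields the uniform bound $CB$ on the full sum. Integrating in $s\in[0,t]$ then gives \eqref{diff}. The main technical point, which I would handle most carefully, is the identification of $\hat{\partial}_{ij}$ as the correct directional derivative (along $X_{ij}$) for both the Itô calculation and the Taylor expansion, since $h_{ij}$ simultaneously controls the off-diagonal entry $(i,j)$ and the two diagonal entries $(i,i),(j,j)$; once this is properly set up, the argument reduces to the standard Lindeberg-type cancellation where only first and second moments of $h_{ij}(s)$ matter, with the third-moment remainder absorbed by the hypothesis $B$.
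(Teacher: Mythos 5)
Your proof is correct and follows exactly the paper's argument: apply Itô's formula to reduce the difference to a time integral over $i<j$, Taylor-expand $\hatdel_{ij}F$ and $\hatdel_{ij}^2 F$ around $h_{ij}(s)=0$ using the independence of $h_{ij}(s)$ from the other entries, and observe that the first- and second-moment contributions cancel (since $\EE[h_{ij}(s)]=0$ and $\EE[h_{ij}(s)^2]=1/N$), leaving a third-derivative remainder controlled by the hypothesis. The only cosmetic difference is that you made the Lindeberg-type cancellation mechanism a bit more explicit.
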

\begin{proof}
From It\^o's formula we have
\begin{align}
\EE[F(H_t)]-\EE[F(H_0)]
=&\int_0^t \EE[\sum_{i<j}\hatdel_{h_{ij}}F(H_s)\d h_{ij}(s)+\frac{1}{2N}\hatdel_{h_{ij}}^2F(H_s)\d s] \notag \\
=&\int_0^t \EE[\sum_{i<j}-\frac{1}{2}h_{ij}(s)\hatdel_{ij}F(H_s)+\frac{1}{2N}\hatdel_{ij}^2F(H_s)]\d s
\end{align}
We expand the terms above.  For some $0 \leq \bm{\theta}_1^{ij} , \bm{\theta}_2^{ij} \leq 1$ we have the equalities
\beqn
 \EE[h_{ij}(s)\hatdel_{ij}F(H_s)]
 =\EE[h_{ij}\hatdel_{ij}F_{h_{ij}=0}+h_{ij}^2\hatdel_{ij}^2F_{h_{ij}=0}+\frac{1}{2}h_{ij}^3 \hatdel_{ij}^3 F(\bm{\theta}_1^{ij}H_s )] ,
\eeqn
and
\beqn
  \EE[\hatdel_{ij}^2F(H_s)]
 =\EE[\hatdel_{ij}^2F_{h_{ij}=0}+h_{ij}\hatdel_{ij}^3F(\bm{\theta}_2^{ij}H_s )].
\eeqn
This leads to
\beqn
\left|\EE[\sum_{i<j}-\frac{1}{2}h_{ij}(s)\hatdel_{ij}F(H_s)+\frac{1}{2N}\hatdel_{ij}^2F(H_s)]\right|\leq N^2\sup_{i<j}\EE[(|h_{ij}(s)|^3 +\frac{1}{N}|h_{ij}(s)|)\sup_{\bm{\theta}^{ij}}|\hatdel_{ij}^3F( \bm{\theta}^{ij} H_s)|],
\eeqn
and \eqref{diff} follows.
\end{proof}

\subsubsection{Comparison of correlation functions}
We now prove that the locally averaged correlation functions of the nontrivial eigenvalues of the matrix $H_0$ coincide with those of $H_t$, i.e. \eqref{eqn:corcompar} of Proposition \ref{p:short_t_comp}.  
The main ingredient of the proof is the following Green's function comparison lemma. Its proof follows essentially from \cite[Lemma 5.2]{Sparse}.
\begin{lemma} \label{lem:Gcorr}
Fix $n \in \mathbb{N}$, and let $\gamma,\delta > 0$ be sufficiently small.
Then the following holds for any $\eta \in [N^{-1-\gamma}, N^{-1}]$,
any sequence of positive integers $k_1,k_2,\dots,k_n$, any set of complex parameters
$z_j^m = E_j^m\pm i\eta$, where $j \in [[{1,k_m}]]$, $m \in [[{1,n}]]$, $|E_j^m| \leq  L$,
and the $\pm$ signs are arbitrary.
Let $F \in C^\infty(\RR^n)$ be a test function such that for any multi-index $m=(m_1,\cdots, m_n)$
with $1\leq |m|\leq 3$ and for any $\omega>0$,
\begin{align}\label{e:reg}
\begin{split}
\max\left\{|\partial^m F(x)|: \|x\|_\infty \leq N^{\omega}\right\}
&\leq N^{c_0\omega},
\\
\max\left\{|\partial^m F(x)|: \|x\|_{\infty}\leq N^{2}\right\}
&\leq N^{c_0} .
\end{split}
\end{align}
Then, defining $\hatG_1 := \hatG(z, H_t)$ and $\hatG_0:=\hatG(z, H_0)$ we have for any $t>0$,
\begin{multline} \label{e:Gcorr}
  \left|  \EE F\left(N^{-k_1} \Tr \prod_{j=1}^{k_1} \hatG_1(z_j^1), \dots,
      N^{-k_n} \Tr \prod_{j=1}^{k_n}\hatG_1(z_j^n)\right)  -\EE F( \hatG_1\rightarrow \hatG_0) \right|
 \leq C_1(\xi N^{\gamma})^{C_2} tN/q.
\end{multline}
Here, $\EE F(\hatG_1 \rightarrow \hatG_0)$ is the expression obtained from the leftmost expression above by replacing $\hatG_1$ with $\hatG_0$.
The constants $C_1$ and $C_2$ depend on $n$, $k_1, \dots, k_n$, $m_1, \dots, m_n$, and the constants in \eqref{e:reg}.
\end{lemma}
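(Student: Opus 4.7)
The plan is to apply Lemma \ref{L:conti} to the smooth functional
\[
F_N(H) := F\Big(N^{-k_1}\Tr\prod_{j=1}^{k_1}\hatG(z_j^1,H),\,\ldots,\,N^{-k_n}\Tr\prod_{j=1}^{k_n}\hatG(z_j^n,H)\Big),
\]
so that the entire content of \eqref{e:Gcorr} reduces to an a priori bound on the third Laplacian-respecting derivative $\hatdel_{ij}^3 F_N$. The moment conditions of Definition \ref{def:H} give $\EE[N^2|h_{ij}|^3]\le c_3 N/q$ and $\EE[N|h_{ij}|]\le N^{1/2}\le N/q$ (since $q\le N^{1/2}$), so it suffices to prove
\[
\sup_{0\le s\le t,\,i<j}\sup_{\bm{\theta}^{ij}}|\hatdel_{ij}^3 F_N(\bm{\theta}^{ij}H_s)|\le (\xi N^{\gamma})^{C_2}
\]
with overwhelming probability; contributions from the exceptional event are handled by the deterministic worst-case bound $|\hatG_{ab}|\le\eta^{-1}\le N^{1+\gamma}$, the second clause of \eqref{e:reg}, and Cauchy--Schwarz against the moments of $h_{ij}$ with $D$ chosen large.

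To control $\hatdel_{ij}^3 F_N$ I would expand by the chain rule using $\hatdel_{ij}\hatG(z,H)=-\hatG(z,H)X_{ij}\hatG(z,H)$ with $X_{ij}=E^{ij}+E^{ji}-E^{ii}-E^{jj}$. Since $X_{ij}$ has only four nonzero entries, the entry-wise formula $(\hatdel_{ij}\hatG)_{ab}=-\hatG_{ai}\hatG_{jb}-\hatG_{aj}\hatG_{ib}+\hatG_{ai}\hatG_{ib}+\hatG_{aj}\hatG_{jb}$ shows that $\hatdel_{ij}^m\hatG_{ab}$ is a finite sum of products of $m+1$ resolvent entries indexed in $\{a,b,i,j\}$. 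Inserting this into each trace and applying Proposition \ref{p:reso_bound} (specifically \eqref{e:reb}, valid for $\eta\ge N^{-1-\gamma}$) bounds each trace argument $N^{-k_m}\Tr\prod_j\hatG(z_j^m)$ and each of its first three Laplacian derivatives by a fixed power of $\xi^3 N^{\gamma}$ with overwhelming probability. These bounds keep the argument of $F$ inside the range $\|x\|_\infty\le N^{1+(k_{\max}+3)\gamma}$ of the polynomial growth hypothesis \eqref{e:reg}; chain-ruling through the at-most-$n$ trace arguments then produces the desired estimate $|\hatdel_{ij}^3 F_N|\le (\xi N^{\gamma})^{C_2}$ for a constant $C_2$ depending on $n$, the $k_m$, and $c_0$. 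Combining this bound with Lemma \ref{L:conti} immediately yields $|\EE F_N(H_t)-\EE F_N(H_0)|\le C_1(\xi N^{\gamma})^{C_2}\,tN/q$, which is \eqref{e:Gcorr}.

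The decomposition \eqref{e:dif}, $\hatG=(H-z)^{-1}-z^{-1}\bolde\bolde^*$, shows that the structural bookkeeping here is identical to that of \cite[Lemma 5.2]{Sparse}: the deterministic rank-one correction is annihilated by $\hatdel_{ij}$ up to $O(N^{-1})$ and plays no role in the derivative estimates, so the argument is essentially the standard Lindeberg-replacement comparison for sparse resolvents, modulo replacing $\partial_{h_{ij}}$ by $\hatdel_{ij}$ and dealing with the extra diagonal contributions generated by the $-E^{ii}-E^{jj}$ piece of $X_{ij}$. The main (and only serious) obstacle is the notational burden of this chain-rule expansion: one must enumerate the monomials in $\hatG$-entries obtained after three derivatives distributed across $n$ trace factors, verify that every such monomial is dominated by a fixed power of $\xi N^{\gamma}$ via \eqref{e:reb}, and check that the resulting exponent $C_2$ is uniform in the auxiliary parameters. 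All genuine probabilistic input is already packaged into Proposition \ref{p:reso_bound} and the moment bound \eqref{asup:moment}, and the final factor $tN/q$ emerges transparently from the third-moment estimate together with the Itô-integrated factor $t$ in \eqref{diff}.
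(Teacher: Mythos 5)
Your proposal follows exactly the same route as the paper: reduce to the third $\hatdel_{ij}$-derivative bound required by Lemma \ref{L:conti}, expand the derivative of each trace as a finite sum of products of resolvent entries via $\hatdel_{ij}\hatG=-\hatG X_{ij}\hatG$ and the sparsity of $X_{ij}$, invoke Proposition \ref{p:reso_bound} (specifically \eqref{e:reb}) on a high-probability event, and handle the complementary event with the deterministic bound $|\hatG_{ab}|\le \eta^{-1}\le N^{1+\gamma}$ together with the second clause of \eqref{e:reg}. The only cosmetic difference is that the paper spells out the chain rule for $n=1$, $k_1=1$ and notes the general case is analogous, whereas you carry the general $n,k_m$ structure explicitly; the substance, and the source of the factor $tN/q$ from the third-moment bound, is identical.
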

\begin{proof}
For simplicity of notation, we only prove \eqref{e:Gcorr} for $n=1$ and $k_1=1$ and write $z$ instead of $z_1^1$; i.e., we prove that
\beqn
\left| \EE[F(N^{-1}\Tr \hatG(z,H_t))]-\EE[F(N^{-1}\Tr \hatG(z, H_0))]  \right| \leq C_1(\xi N^{\gamma})^{C_2} tN/q.
\eeqn
By Lemma \ref{L:conti}, it is sufficient to prove that
\begin{align}\label{e:up_bound}
\sup_{0\leq s\leq t, \atop 1\leq i<j\leq N+1}\EE[(N^2|h_{ij}(s)|^3 +N|h_{ij}(s)|)\sup_{\bm{\theta}^{ij}}|\hatdel_{ij}^3F(N^{-1}\Tr \hatG(z,\bm{\theta}^{ij}H_s)|]\leq C_1 (\xi N^{\gamma})^{C_2} N/q.
\end{align}
An explicit computation yields
\beqn
\hatdel_{ij}^3 F(N^{-1}\Tr \hatG)=(N^{-1}\hatdel_{ij}\Tr \hatG)^3F'''+3(N^{-1}\hatdel_{ij}\Tr \hatG)(N^{-1}\hatdel_{ij}^2\Tr \hatG)F''+(N^{-1}\hatdel_{ij}^3\Tr \hatG)F'.
\eeqn
The derivative of the trace is
\beqn
\hatdel_{ij}^r \Tr \hatG = (-1)^r r! \Tr [ \hatG (X_{ij} \hatG )^r ].
\eeqn
The matrix $X_{ij}$ consists of four nonzero entries and so $(\hatdel_{ij})^l\Tr G$ is a sum of $r!4^r$ of terms which are products of resolvent entries. By Proposition \ref{p:reso_bound} we have with overwhelming probability for any $z$ with $\Im [z]\geq N^{-1-\gamma}$ that
\begin{align}\label{e:high_p}
\sup_{\bm{\theta}^{kl}}\left|N^{-1}(\hatdel_{ij})^{r}\Tr \hatG(z,\bm{\theta}^{ij}H_s)\right|\leq 4^rr! (\xi^3 N^{\gamma})^{r+1},\quad r=1,2,3.
\end{align}
We denote $\cal A$ the event that \eqref{e:high_p} holds. On $\cal A$ we have
\beqn
\sup_{\bm{\theta}^{ij}}|\hatdel_{ij}^3F(N^{-1}\Tr \hatG(z,\bm{\theta}^{ij}H_s)|\leq C (\xi^3 N^\gamma)^{6+c_0}.
\eeqn
On the complement $\A^c$ we have the deterministic bound
\beqn
\sup_{\bm{\theta}^{kl}}\left|N^{-1}(\del_{ij})^{r}\Tr \hatG(z,\bm{\theta}^{ij}H_s)\right|\leq r! (4/\Im [z])^r\leq 4^rr!(N^{1+\gamma})^{r+1},\quad r=1,2,3.
\eeqn
We deduce \eqref{e:up_bound}, yielding the lemma.
\end{proof}

The following lemma is essentially \cite[Theorem 6.4]{erdos2012bulk}.
It transforms the statement about the Green's function of Lemma~\ref{lem:Gcorr}
to a statement about the local correlation functions. 

\begin{lemma}\label{lem:comparecorr}
Consider two random Laplacian-type matrices $H_0$ and $H_1$ with Green's functions $\hatG_0(z)$ and $\hatG_1(z)$.
Suppose that for all choices of the parameters (i.e., $n, \gamma, \delta$, $k_i$'s, $z_i$'s, etc. ) and functions $F$ the estimate \eqref{e:Gcorr} holds.  Then for all smooth observables $O$
\beqn
\lim_{N\to \infty} \int_{\rr^n} O ( \alpha_1, \cdots \alpha_n ) \left\{ \rho_{H_0}^{(n)} \left( E + \frac{ \alpha_1}{N} , \cdots ,  E + \frac{ \alpha_n}{N} \right) - \rho_{H_1}^{(n)} \left( E + \frac{ \alpha_1}{N} , \cdots ,  E + \frac{ \alpha_n}{N} \right) \right\} \d \alpha_1 \cdots \d \alpha_n = 0.
\eeqn
\end{lemma}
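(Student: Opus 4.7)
The plan is to reduce the comparison of local correlation functions to the Green's function comparison hypothesis \eqref{e:Gcorr} via a Poisson-kernel smoothing argument. The essential identity is that for $\eta > 0$,
\[
\frac{1}{\pi N} \Im \Tr \hatG(x + \i\eta, H) = \frac{1}{N} \sum_{j=1}^N \frac{1}{\pi} \frac{\eta}{(\lambda_j(H)-x)^2 + \eta^2}
\]
is a smoothed version of the empirical density at scale $\eta$; taking $\eta = N^{-1-\gamma}$ for a small $\gamma > 0$ produces, after the rescaling $x \mapsto E + \alpha/N$, a narrow kernel of width $N^{-\gamma}$ in the unit-eigenvalue-spacing coordinate.

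First I would introduce, for fixed $E$, the quantities
\[
J_H(E) := \int_{\RR^n} O(\alpha)\, \rho^{(n)}_H\!\left(E + \tfrac{\alpha_1}{N}, \ldots, E + \tfrac{\alpha_n}{N}\right) d\alpha_1 \cdots d\alpha_n,
\]
\[
J^\eta_H(E) := \frac{1}{(\pi N)^n} \int_{\RR^n} O(\alpha)\, \EE\!\left[\prod_{k=1}^n \Im \Tr \hatG\!\left(E + \tfrac{\alpha_k}{N} + \i\eta, H\right)\right] d\alpha_1 \cdots d\alpha_n.
\]
Expanding the product of imaginary parts as a sum over $n$-tuples of eigenvalue indices, the distinct-index contribution to $J^\eta_H(E)$ equals the convolution of $\rho^{(n)}_H$ against a product of Poisson kernels of scale $\eta$, while the coinciding-index terms are suppressed by factors of $1/N$ and are negligible.

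Second, I would apply the hypothesis \eqref{e:Gcorr} with $n$ copies of a single Green's function ($k_1 = \cdots = k_n = 1$), spectral parameters $z_k^k = E + \alpha_k/N + \i\eta$, and test function $F(y_1,\ldots,y_n) = \prod_{k=1}^n \Im(y_k)$. Since each partial derivative of $F$ is a product of at most $n-1$ factors of the form $\Im(y_j)$, the condition \eqref{e:reg} is satisfied with $c_0 = n-1$. Integrating the resulting pointwise bound against the compactly supported $O$ yields
\[
\left| J^\eta_{H_0}(E) - J^\eta_{H_1}(E) \right| \leq C_1 (\xi N^\gamma)^{C_2} \, \tfrac{t N}{q} \, \|O\|_{L^1},
\]
which tends to zero once $\gamma$, $\nu$ and $\epsilon$ (with $t = N^{-1+\epsilon}$) are chosen small enough relative to the sparsity exponent $\beta$, since $q \geq N^\beta$.

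Finally, I would verify that $J^\eta_H(E) \to J_H(E)$ as $N \to \infty$ for both $H_0$ and $H_1$. In the rescaled variables, the distinct-index part of $J^\eta_H(E)$ is the convolution of $\rho^{(n)}_H$ with a product of Poisson kernels of scale $N\eta = N^{-\gamma} \to 0$; since $O$ is smooth and compactly supported, a standard argument using the rigidity of the bulk eigenvalues (Corollary \ref{cor:rig}) shows this convolution converges to the integral against $O$ itself. The main technical obstacle is in precisely this last step: one must show that the narrow Poisson kernels at scale $N^{-\gamma}$ correctly resolve the local correlation function, which requires the local law at the optimal scale $\eta \gtrsim N^{-1}$ (Theorem \ref{t:lclaw}) combined with careful handling of the diagonal contributions. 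This reduction is standard and follows, for example, the scheme of \cite[Theorem 6.4]{erdos2012bulk}.
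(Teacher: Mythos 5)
The paper does not actually prove this lemma; it simply asserts that it ``is essentially \cite[Theorem 6.4]{erdos2012bulk}'' and gives no argument of its own, so there is no paper proof to compare yours against. Your sketch correctly identifies the scheme behind that reference: Poisson-kernel smoothing at scale $\eta = N^{-1-\gamma}$ just below the eigenvalue spacing, then feed the product of $\Im\Tr\hatG$ into the Green's function comparison hypothesis \eqref{e:Gcorr}, then de-smooth. Your choice of test function $F(y_1,\dots,y_n)=\prod_k\Im(y_k)$ with $k_1=\dots=k_n=1$ and the observation that its derivatives satisfy \eqref{e:reg} with $c_0$ of order $n$ are both correct, and your bookkeeping of the parameters ($\gamma$, $\nu$, $\epsilon$ small relative to $\beta$) is the right way to make the resulting $C_1(\xi N^\gamma)^{C_2}tN/q$ error vanish.

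Where your sketch is thin is step 3, which is really the content of the cited theorem. You assert that ``a standard argument using the rigidity of the bulk eigenvalues shows this convolution converges to the integral against $O$ itself,'' but rigidity by itself does not dispose of either of the two actual difficulties: (a) the coinciding-index contributions to $\prod_k\Im\Tr\hatG$, which involve squared Poisson kernels and can be as large as the main term if an eigenvalue sits within $\eta$ of the spectral parameter; and (b) the slow $1/\beta^2$ tails of the Poisson kernel, which mean $O*P_{N\eta}$ is not compactly supported even though $O$ is, so the far eigenvalues must be controlled. In the reference, (a) is handled by an explicit decomposition plus Ward-type/Cauchy--Schwarz estimates resting on the local law at scale $\eta\gtrsim N^{-1}$ (your Theorem \ref{t:lclaw}, applicable to both $H_0$ and $H_1$ since both are random Laplacian-type matrices), while (b) is where rigidity genuinely enters. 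You gesture at both points, but the assertion that rigidity alone closes step 3 overstates what it gives you; it is precisely the interplay of these estimates that makes \cite[Theorem 6.4]{erdos2012bulk} a nontrivial theorem rather than a routine smoothing argument, which is presumably why the present paper imports it as a black box.
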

We have therefore proven \eqref{eqn:corcompar} of Proposition \ref{p:short_t_comp}.
\subsubsection{Comparison of gap statistics}

In this subsection, we prove \eqref{eqn:gapcompar} of Proposition \ref{p:short_t_comp}, the comparison of the gap statistics for random Laplacian-type matrices, which is extremely similar to the argument given in detail in \cite[Section 4, 5]{Sparse}.  The key difference is that we do not have a delocalization estimate for all $N$ eigenvectors but only for the eigenvectors corresponding to eigenvalues belonging to the interval $[-L, L]$.

The first step is to obtain the following level repulsion estimate for random Laplacian-type matrices.

\begin{proposition} \label{p:tlevp}
Let $H$ be a random Laplacian-type matrix. 
Fix a small $\kappa>0$ and fix an index $i \in [[ \kappa N, ( 1 - \kappa ) N ]]$.  For all sufficiently small $\tau >0$ we have
\begin{align}\label{e:levp}
\PP(|\lambda_i(H)-\lambda_{i+1}(H)|\leq N^{-1-\tau})\leq N^{-\tau/2}.
\end{align} 
\end{proposition}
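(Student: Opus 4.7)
The proof mirrors the level repulsion argument of \cite[Sections 4--5]{Sparse}, with modifications accommodating the fact that our delocalization (Corollary~\ref{cor:del}) and rigidity (Corollary~\ref{cor:rig}) hold only for eigenvalues lying in the bulk window $[-L, L]$. Fortunately, for the specified bulk index $i \in [[\kappa N, (1-\kappa)N]]$, Corollary~\ref{cor:rig} places $\lambda_i, \lambda_{i+1} \in [-L/2, L/2]$ with overwhelming probability, so we may condition on this event and apply our estimates throughout.

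Set $\eta := N^{-1-\tau}$ and suppose the gap event $\{|\lambda_i - \lambda_{i+1}| \leq \eta\}$ holds; let $E := (\lambda_i + \lambda_{i+1})/2$. The spectral identity
\beqn
\Im G_{kk}(E + \i\eta) = \sum_j \frac{\eta \, |u_j(k)|^2}{(\lambda_j - E)^2 + \eta^2} \geq \frac{|u_i(k)|^2 + |u_{i+1}(k)|^2}{2\eta},
\eeqn
combined with the normalization $\sum_k (|u_i(k)|^2 + |u_{i+1}(k)|^2) = 2$, produces at least one $k$ with $\Im G_{kk}(E + \i\eta) \geq c N^{\tau}$. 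Since $G_{kk}(\,\cdot\, + \i\eta)$ is $O(\eta^{-2}) = O(N^{2+2\tau})$-Lipschitz in $E$, after discretizing $E$ to an $N^{-10}$-grid and taking a union bound over $k$, it therefore suffices to bound
\beqn
\pp\left[\Im G_{kk}(E + \i \eta) \geq c N^{\tau}\right]
\eeqn
for each fixed $k$ and grid point $E$, by a quantity small enough to absorb the polynomial union-bound factors.

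For fixed $(k, E)$, the Schur complement formula \eqref{eqn:schur} gives $G_{kk} = 1/D_k$, where $D_k = -\sumk_j h_{kj} - z - \sumk_{ij} h_{ki} G^{(k)}_{ij} h_{kj}$. The identity $\Im G_{kk} = |\Im D_k|/|D_k|^2$, together with $|D_k|^2 \geq (\Im D_k)^2$, shows that the anomalous growth $\Im G_{kk} \geq c N^\tau$ forces both $|\Re D_k|, |\Im D_k| \leq C/N^\tau$. Invoking Proposition~\ref{p:er} we may replace $G^{(k)}$ by the independent $\tilGk$ at negligible cost. Conditioning on $\tilHk$, the quantity $D_k$ becomes a linear-plus-quadratic form in the independent centered entries $\{h_{kj}\}_{j \neq k}$: the linear part is $-\sum_j h_{kj}$ (appearing in $\Re D_k$, with fluctuation $O(1)$) and the quadratic part is $h^T \tilGk(E + \i\eta) h$. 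Anti-concentration of $\Re D_k$ at scale $N^{-\tau}$ (driven by the linear term) and a concentration estimate for the quadratic form (controlling the size of $\Im D_k$) combine to give a polynomially small probability, which after the union bounds yields $\pp[|\lambda_i - \lambda_{i+1}| \leq N^{-1-\tau}] \leq N^{-\tau/2}$.

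The principal technical obstacle is establishing sharp enough anti-concentration for $D_k$ in the sparse regime, where the entries $h_{kj}$ are highly concentrated near $0$ and a direct Berry--Esseen estimate is insufficient. Following \cite[Section~4]{Sparse}, one exploits that $\sum_j h_{kj}$ is a sum of $\Theta(N)$ independent centered variables satisfying the moment bound \eqref{asup:moment}, producing a density bounded at scales $\gtrsim N^{-1/2}$ by a smoothing argument; since the required anti-concentration is at scale $N^{-\tau}$ with $\tau$ small, this suffices to close the argument.
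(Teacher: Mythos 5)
Your approach is fundamentally different from the paper's, and it does not close. The paper proves \eqref{e:levp} by introducing the Tao--Vu quantity $Q_i(H)=N^{-2}\sum_{j\neq i}(\lambda_j-\lambda_i)^{-2}$, composing it with a cutoff $\chi$, establishing $\EE[\chi(Q_i(H_t))]\le CN^{3\tau/2}$ for $t=N^{-1+\epsilon}$ via the DBM level-repulsion result \cite[Theorem~3.6]{DBM} applied to the decomposition of Lemma~\ref{lem:Htdecomp}, transporting the bound to $t=0$ by the continuity Lemma~\ref{L:conti} (which needs a bound on $\sup_{\bm{\theta}^{ab}}|\hatdel_{ab}^3\chi(Q_i)|$, obtained using the delocalization for nearby eigenvectors and crude bounds for the far ones), and finally applying Markov's inequality. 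No union bound over rows or spectral parameters appears; the self-averaging in $Q_i$ does the work.

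The critical gap in your argument is the union bound. You correctly observe that a gap $|\lambda_i-\lambda_{i+1}|\le\eta$ with $\eta=N^{-1-\tau}$ forces $\Im G_{kk}(E+\i\eta)\ge cN^{\tau}$ for \emph{some} $k\in[[1,N+1]]$, and you then reduce to bounding $\pp[\Im G_{kk}(E+\i\eta)\ge cN^\tau]$ for fixed $(k,E)$ after discretizing $E$ on a grid of size $\gtrsim N$ (you take $N^{10}$). The anti-concentration mechanism you invoke — that $\Re D_k$ is a sum of $\Theta(N)$ independent terms of total variance $\sim 1$, hence has bounded density at scales $\gtrsim N^{-1/2}$ — gives, at best, $\pp[|\Re D_k|\le C N^{-\tau}]\lesssim N^{-\tau}$, and even requiring both $|\Re D_k|$ and $|\Im D_k|$ to be $\le CN^{-\tau}$ cannot plausibly give better than $N^{-2\tau}$. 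After the union bound over $k$ (at least $N$ terms) and the $E$-grid (at least $N$ more), the total probability estimate is of order $N^{2-\tau}$ or worse, which exceeds $1$ for all small $\tau$. So the argument as sketched yields nothing, let alone the target $N^{-\tau/2}$. Put differently: the per-row event $\{\Im G_{kk}\ge cN^\tau\}$ is not rare enough for a row-by-row union bound to recover the rarity of the small-gap event, which is precisely why level repulsion in this literature goes through a self-averaging observable ($Q_i$) or through multi-resolvent/overlap estimates rather than through a pointwise Schur-complement anti-concentration bound. There is also a secondary issue: in the sparse regime $q\ll N^{1/2}$, $\sum_j h_{kj}$ is a sum of mostly-zero Bernoulli-type variables and does \emph{not} have a density bounded by $O(1)$ at scales below $q^{-1}$ — there is an atom-like concentration — so even the claimed scale-$N^{-\tau}$ anti-concentration for $\Re D_k$ is not justified without substantial additional work, again pointing to why the paper takes the DBM-plus-comparison route instead.
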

\begin{proof}  The proof is similar to the proof of \cite[Theorem 4.1]{Sparse}.  We detail only what changes.  Denote the matrix
\beqn
R_i(H) :=\sum_{j:j\neq i,j\leq N}\frac{1}{\lambda_i(H)-\lambda_j(H)}u_j(H)u_j(H)^*=\oint_{|z-\lambda_i(H)|=\omega}\frac{\hatG(z, H)}{\lambda_i(H)-z}\d z,
\eeqn
where $\omega$ is chosen such that the contour $|z-\lambda_i(H)|=\omega$ encloses only $\lambda_i(H)$. We define the quantity
\beqn
Q_i(H):=\frac{1}{N^2}\Tr (R_i(H)^2)=\frac{1}{N^2}\sum_{j:j\neq i,j\leq N}\frac{1}{(\lambda_j(H)-\lambda_i(H))^2}.
\eeqn
This quantity plays an important role in \cite{tao2011,tao2010}, where it was observed that it captures quantitatively
the derivatives of $\lambda_i(H)$. The function $Q_i(H)$ is not well-defined on the space of real symmetric matrices (it diverges when $\lambda_i(H)$ is not a single eigenvalue) and so we compose it with a cutoff function $\chi = \chi_N$.  The function $\chi$ is chosen so that: (1) it is smooth with its first three derivatives bounded; (2) 
on the interval $[0,N^{2\tau}]$ we have that $|\chi(x)-x|\leq 1$, and  for $x\geq N^{2\tau}$, $\chi(x)=N^{2\tau}$.  The function $\chi (Q_i (H))$ is then a well-defined smooth function on the space of real symmetric matrices.


The proof of Proposition \ref{p:tlevp} consists of three steps. The first step is to obtain the estimate
\begin{align} \label{e:step1}
\EE[\chi(Q_i(H_t))]\leq CN^{3\tau/2},
\end{align}
for some $t=N^{-1+\epsilon}$ with $\epsilon$ sufficiently small. This estimate follows from \cite[Theorem 3.6]{DBM} and Lemma \ref{lem:Htdecomp}.

In the second step, we compare $\EE[\chi(Q_i(H_t))]$ for $t=N^{-1+\epsilon}$ and $\EE[\chi(Q_i(H_0))]$ using the continuity Lemma \ref{L:conti}.  By choosing $\epsilon$ sufficiently small we claim that
\begin{align}\label{e:step2}
|\EE[\chi(Q_i(H_t))]-\EE[\chi(Q_i(H_0))]|\leq 1.
\end{align}
Taking $F(H):= \chi(Q_i(H))$ in Lemma \ref{L:conti} we see that it suffices to prove that
\begin{align}\label{e:derb}
\sup_{0\leq s\leq t, \atop 1\leq a<b\leq N+1}\EE[(N^2|h_{ab}(s)|^3 +N|h_{ab}(s)|)\sup_{\bm{\theta}^{ab}}|\hatdel_{ab}^3\chi(Q_i(\bm{\theta}^{ab}H_s))|]\leq C_1(\xi N^{\tau})^{C_2}N/q.
\end{align}
The proof of this bound is similar to the computation completed in \cite[Proposition~4.6]{Sparse}.  We outline the differences. The formulas \cite[(4.16)--(4.18)]{Sparse} remain valid after replacing $V$ by the $X_{ab}$.  The end result is that $\hatdel_{ab}^3 Q_i (H)$ is a finite sum of terms of the form 
\begin{align}
&\frac{1}{N^2}  \sum_{j: j\neq i}\frac{\Omega}{(\lambda_i-\lambda_j)^5},\quad \Omega=V_{ii}^3, V_{ii}^2V_{jj}, V_{ii}V_{jj}^2, V_{jj}^3.\notag \\
&\frac{1}{N^2}  \sum_{j_1,j_2:\atop j_1,j_2\neq i}\frac{\Omega}{(\lambda_i-\lambda_{j_1})^4(\lambda_{i}-\lambda_{j_2})},\quad \Omega= V_{ii}V_{j_1j_2}^2,V_{j_1j_1}V_{ij_2}^2,V_{j_1j_1}V_{j_1j_2}^2, V_{ii}V_{ij_2}^2.\notag \\
&\frac{1}{N^2}  \sum_{j_1,j_2:\atop j_1,j_2\neq i}\frac{\Omega}{(\lambda_i-\lambda_{j_1})^3(\lambda_{i}-\lambda_{j_2})^2},\quad
\Omega= V_{j_2j_2}V_{j_1j_2}^2, V_{ii}V_{ij_1}^2, V_{ii}V_{j_1j_2}^2, V_{j_1j_1}V_{j_1j_2}^2.\notag \\
&\frac{1}{N^2}  \sum_{j_1,j_2,j_3:\atop j_1,j_2,j_3\neq i}\frac{V_{j_1j_2}V_{j_2j_3} V_{j_3j_1}}{(\lambda_i-\lambda_{j_1})^3(\lambda_i-\lambda_{j_2})(\lambda_i-\lambda_{j_3})},
\sum_{j_1,j_2,j_3:\atop j_1,j_2,j_3\neq i}\frac{V_{j_1j_2}V_{j_2j_3} V_{j_3j_1}}{(\lambda_i-\lambda_{j_1})^2(\lambda_i-\lambda_{j_2})^2(\lambda_i-\lambda_{j_3})}, \label{eqn:delabQterms}
\end{align}
where $V_{jk} := u_j^* X_{ab} u_k$.  The only change in controlling the above quantities arises from the fact that we do not have the delocalization of eigenvectors corresponding to eigenvalues outside the interval $[-L, L]$. However, 
the contribution for eigenvalues far away from $\lambda_i(\bm{\theta}^{ab}H_s)$ is negligible due to the fact that the denominator is bounded below.   For the sake of exposition, we show how the existing proof of \cite[Section 4]{Sparse} is modified to bound the term
\beqn
\frac{1}{N^2} \sum_{j : j \neq i}  \frac{ V_{jj}^3}{ ( \lambda_i - \lambda_j)^5}.
\eeqn
On the event  $\{Q_i(\bm{\theta}^{ab}H_s)\leq N^{2\tau}\}$ we have
\beqn
\sum_{j : j \neq i} \frac{1}{ | \lambda_i - \lambda_j |^k} \leq \left( \sum_{j : j \neq i} \frac{1}{ | \lambda_i - \lambda_j|^2} \right)^{k/2} \leq N^{k ( 1 + \tau) }, \qquad k \geq 2.
\eeqn
 Let $\A$ be the event that the bound \eqref{e:re_bound1} holds.  On the event $\A$ we have that
\beqn
\|u_j\|_{\infty}^2\leq \xi^3/N, \qquad j : |\lambda_j-\lambda_i|\leq L/2.
\eeqn
Therefore on the event $\A \cap \{Q_i(\bm{\theta}^{ab}H_s)\leq N^{2\tau}\}$ we have
\begin{align}
\frac{1}{N^2}  \sum_{j : j \neq i}  \frac{ V_{jj}^3}{ | \lambda_i - \lambda_j |^5} & \leq \frac{1}{N^2}  \sum_{{ j : j \neq i \atop |\lambda_j - \lambda_i | \leq L/2}}  \frac{ V_{jj}^3}{ | \lambda_i - \lambda_j |^5} + \frac{1}{N^2}  \sum_{j : j \neq i \atop |\lambda_j - \lambda_i | \geq L/2}  \frac{ V_{jj}^3}{ | \lambda_i - \lambda_j |^5} \notag \\
&\leq \frac{\xi^9}{N^5} \sum_{j : j \neq i} \frac{1}{ | \lambda_i - \lambda_j|^5} + \frac{1}{N^2}  \frac{2}{L}  \sum_{j : j \neq i}  | u_j^* X_{ab} u_j|^3 \notag \\
& \leq \xi^9 N^{5 \tau} + \frac{C}{N^2} \leq C_1 ( \xi N^{\tau} )^{C_2}
\end{align}
The other terms of \eqref{eqn:delabQterms} are handled similarly.  The bound \eqref{e:derb} is subsequently obtained by a similar argument as in Lemma \ref{lem:Gcorr}. 
We then  choose $\tau$ and $\xi$ sufficiently small such that $C_1(\xi N^{\tau})^{C_2}\leq N^{\beta/4}$, where $\beta$ is from \eqref{defq}, and $t=N^{-1+\epsilon}$ with $\epsilon\leq \beta/2$, then we have  $t C_1(\xi N^{\tau})^{C_2}N/q\leq N^{-\beta/4}$ and \eqref{e:step2} follows.

Combining \eqref{e:step1} and \eqref{e:step2} yields
\beqn
\EE[\chi(Q_i(H_0))]\leq CN^{3\tau/2}.
\eeqn
The estimate \eqref{e:levp} follows from Markov's inequality and the definition of $\chi$.
\end{proof}

\begin{proof}[Proof of \eqref{eqn:gapcompar} of Proposition \ref{p:short_t_comp}] For any test function $O\in C^{\infty}(\RR^n)$ with bounded first three derivatives, it suffices to prove the stronger claim:
\begin{align}\label{e:stat_coin}
\lim_{N\rightarrow \infty}\left|\EE[O(N\lambda_i(0),\cdots, N\lambda_{i+n}(0) )] -\EE[O(N\lambda_i(t),\cdots, N\lambda_{i+n}(t ) )]\right|=0
\end{align}
for $t\leq N^{-1+\epsilon}$. For simplicity of notion we only prove \eqref{e:stat_coin} in the case $n=1$:
\begin{align}\label{e:case1}
\lim_{N\rightarrow \infty}|\EE[O(N\lambda_i(t))]-\EE[O(N\lambda_i(0))]|=0.
\end{align}
The general case is analogous. 
 
 Take a cutoff function $\rho$ such that $\rho(x)=1$ for $x\leq N^{2\tau}$ and $\rho(x)=0$ for $x\geq 2N^{2\tau}$, where $\tau>0$ is a small constant.  We can choose $\rho$ so that its first three derivatives are bounded. We have by Proposition \ref{p:tlevp} that
\beqn
\PP(Q_i(H_s)\geq N^{2\tau})\leq N^{-\tau/2}, \quad s=0, t.
\eeqn
Since $O\in C^{\infty}(\RR)$ is bounded,  we have
\begin{align}
&\left|\EE[O(N\lambda_{i}(0))]-\EE[O(N\lambda_{i}(t))]\right| \notag \\
\leq &\left|\EE[O(N\lambda_{i}(0))\rho_{M}(Q_i(H_0))]-\EE[O(N\lambda_{i}(t))\rho_{M}(Q_i(H_t))]\right|  \notag \\
+ &\|O\|_{\infty}\PP(Q_i(H_0)\geq N^{2\tau}))+\|O\|_{\infty}(\PP(Q_i(H_t)\geq N^{2\tau})) \notag \\
\leq &\left|\EE[O(N\lambda_{i}(0))\rho_{M}(Q_i(H_0))]-\EE[O(N\lambda_{i}(t))\rho_{M}(Q_i(H_t))]\right|+\frac{2\|O\|_{\infty}}{N^{\tau/2}}.
\end{align}
Notice that $O(N\lambda_{i}(H))\rho_{M}(Q_i(H))$ is a well-defined smooth function on the space of symmetric matrices. We apply Lemma \ref{L:conti} with the function $F(H)=O(N\lambda_{i}(H))\rho_{M}(Q_i(H))$.  An argument similar to the derivation of \eqref{e:derb} yields
\beqn
\sup_{\bm{\theta}^{ab}}|\hatdel_{ab}^3O(N\lambda_{i}(H))\rho_{M}(Q_i(H))|\leq C_1(\xi N^{\tau})^{C_2},
\eeqn
with overwhelming probability. Thus by Lemma \ref{L:conti},
\beqn
\left|\EE[O(N\lambda_{i}(0))\rho_{M}(Q_i(H_0))]-\EE[O(N\lambda_{i}(t))\rho_{M}(Q_i(H_t))]\right|\leq C_1(\xi N^{\tau})^{C_2}tN/q.
\eeqn
The equation \eqref{e:case1} follows if we hoose $\tau$ and $\xi$ sufficiently small such that $C_1(\xi N^{\tau})^{C_2}\leq N^{\beta/4}$, where $\beta$ is from \eqref{defq}, and $t=N^{-1+\epsilon}$ with $\epsilon\leq \beta/2$, then we have  $t C_1(\xi N^{\tau})^{C_2}N/q\leq e^{-\beta/4}$. And \eqref{e:case1} follows.
\end{proof}
We have therefore proven \eqref{eqn:gapcompar} of Proposition \ref{p:short_t_comp}. This finishes the proof of Proposition \ref{p:short_t_comp}.

\subsection{Universality of $H_t$}
The goal of this section is the following proposition.
\begin{proposition}\label{p:uniHt}
Let $H$ be a random Laplacian-type matrix. 
We define $H_t$ as  in \eqref{def:Ht}. Let $t = N^{-1+\epsilon}$, for sufficiently small $\epsilon\leq \beta/2$, where $\beta$ is from \eqref{defq}.  There exists a constant $c$ so that the following holds.  Firstly, we have gap universality for $H_t$.  For any $\kappa>0$ and index $i\in [[\kappa N, (1-\kappa)N]]$ we have
\begin{align}
\begin{split}
&\left|\EE^{(H)}[ O(N \rfc(\gamma_i)(\lambda_i-\lambda_{i+1}), \cdots, N\rfc(\gamma_i)(\lambda_i -\lambda_{i+n}))]\right.\\
-&\left.\EE^{(GOE)}[ O(N \rho_{sc}(\gamma_{i,sc})(\lambda_i-\lambda_{i+1}), \cdots, N\rho_{sc}(\gamma_{i,sc})(\lambda_i -\lambda_{i+n}))]\right|\leq C N^{-c},
\end{split}
\end{align}
for any test function $O \in C_c^\infty ( \rr^n)$. 

 Secondly, the averaged $n$-point correlation functions of the nontrivial eigenvalues of $H_t$ are universal. For any $\delta >0$ and $E\in [-L,L]$,  and $b\geq N^{-1+\delta}$ we have
\begin{align}
\begin{split}
&\left|\int_{E-b}^{E+b}\int_{\RR^n} O(\alpha_1,\cdots, \alpha_n)\left\{ \frac{1}{\rfc(E)^n}\rho^{(n)}_{H_t}\left( E'+\frac{\alpha_1}{N\rfc(E)},\cdots E'+\frac{\alpha_n}{N\rfc(E)}\right)\right.\right.\\
-&\left.\left.\frac{1}{\rho_{sc}(E)^n}\rho^{(n)}_{GOE}\left( E'+\frac{\alpha_1}{N\rho_{sc}(E)},\cdots E'+\frac{\alpha_n}{N\rho_{sc}(E)}\right)\right\} \d \alpha_1 ... \d \alpha_n \frac{\d E'}{2b}\right|\leq N^{-c},
\end{split}
\end{align}
where the test function $O\in C^{\infty}_{c}(\RR^n)$.
\end{proposition}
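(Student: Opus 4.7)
The plan is to reduce the problem to a deformed GOE ensemble and invoke the main theorem of \cite{DBM} directly. By Lemma \ref{lem:Htdecomp}, the nontrivial eigenvalues of $H_t$ are equal in law to those of $\hatH_t = \hatA_t + (1-\e^{-t})^{1/2} \, \mathrm{GOE}$, where the $N\times N$ matrix $\hatA_t$ is independent of the GOE component. Setting $s = 1-\e^{-t} \asymp t = N^{-1+\epsilon}$, the eigenvalues of $\hatH_t$ have the same joint distribution as those obtained by running Dyson Brownian motion for time $s$ starting from the initial data $\hatA_t$ (up to the trivial OU deterministic rescaling which does not affect local statistics). Since $s \gg N^{-1}$, we are in the regime in which the results of \cite{DBM} apply.

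Next I would verify that $\hatA_t$ furnishes admissible initial data for \cite{DBM}. The local law \eqref{eqn:Alclaw} already gives
\[
\bigl| m_{\hatA_t}(z) - \mfc(z) \bigr| \leq \xi \Psi
\]
uniformly for $z \in \D_L$, so a standard argument (as in the proof of Corollary \ref{cor:rig}) yields rigidity of the eigenvalues of $\hatA_t$ inside $[-L,L]$ with the classical locations dictated by $\rhofc$. Combined with the regularity of $\rhofc$ from Lemma \ref{l:reg} (the density is positive and bounded, and $\mfc$ is Lipschitz on the bulk), this constitutes the hypothesis required by the main theorem of \cite{DBM} on the optimal time to equilibrium of DBM with general initial data. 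Applying that theorem at time $s = 1-\e^{-t}$ yields gap universality and averaged correlation function universality for $\hatH_t$ in the bulk, with a polynomial rate $N^{-c}$ in the size of the test function.

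The remaining step is to match the local density produced by \cite{DBM} against the $\rhofc(E)$ appearing in the statement of Proposition \ref{p:uniHt}. The output of \cite{DBM} is expressed in terms of the density $\rho_t := \rho_{\hatA_t} \boxplus \rho_{\sc, \sqrt{s}}$, where $\rho_{\sc,\sqrt{s}}$ is the semicircle of radius $2\sqrt{s}$. Using the local law for $\hatA_t$ together with the defining equation \eqref{e:mfc} for $\mfc$, a short perturbative argument (analogous to the proof of \eqref{e:er01}) shows that $|\rho_t(E) - \rhofc(E)| = O(\sqrt{s} + \xi\Psi) = O(N^{-c})$ for $E \in [-L,L]$, which allows replacement of $\rho_t(E)$ by $\rhofc(E)$ in both the rescaling and the normalization. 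Translating the resulting estimates through the rank-one identity \eqref{e:dif} back to $H_t$ itself completes the proof. The main obstacle is the density matching in this third step: one must propagate the $\xi\Psi$ local law estimate through free convolution with a small semicircle while keeping quantitative control on the bulk density. Everything else reduces to a black-box application of \cite{DBM}.
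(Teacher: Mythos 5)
Your approach follows the same overall strategy as the paper: decompose $H_t$ via Lemma \ref{lem:Htdecomp} into $\hatH_t = \hatA_t + (1-\e^{-t})^{1/2}\,\mathrm{GOE}$, verify that $\hatA_t$ satisfies a local law, invoke the DBM universality machinery of \cite{DBM}, and match the local density $\rho_t$ against $\rhofc$. The paper's proof itself is terse precisely because it delegates the detailed checking to Lemma \ref{lem:determbds} and \cite[Theorem 3.1]{Sparse}. So the skeleton of your argument is sound.

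The one genuine gap is the claim that everything ``reduces to a black-box application of \cite{DBM}.'' It does not. The theorem of \cite{DBM} (their Theorem 2.5) assumes a uniform lower bound on the spectrum of the initial data $A$; in the present setting $\hatA_t$ does not have a bounded spectrum, only $\|\hatA_t\| \leq \xi = N^\nu$ with overwhelming probability (see the definition of $\A$ in \eqref{defA}). Because of this, the paper cannot cite \cite{DBM} directly and instead states and justifies a modified version, Theorem \ref{thm:abulk}: the lower bound on the spectrum of $A$ is used in \cite{DBM} only to push the rigidity error below $N^{\delta}/N$ via their Lemma 7.17, and the paper argues that replacing the lower bound by $\|A\| \leq N^{\delta_1}$ still yields rigidity with error $N^{C\delta}/N$, which is sufficient for the DBM argument. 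Your proposal notices that rigidity should hold ``inside $[-L,L]$'' but does not address the uncontrolled extreme eigenvalues, and this is exactly the piece that prevents a literal black-box citation.

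A smaller point: your density matching via a perturbative argument ``analogous to \eqref{e:er01}'' concerns Stieltjes transforms, while what is really needed is the pointwise density comparison $|\rho_t(E) - \rhofc(E)|$ and the comparison at classical locations $|\rho_t(\gamma_{i,t}) - \rhofc(\gamma_i)|$, which the paper obtains in Lemma \ref{lem:determbds} with bound $C\xi^3/(Nt)$. Your claimed rate $O(\sqrt{s} + \xi\Psi)$ is not obviously correct in the form stated; the Stieltjes transform bound of Lemma \ref{lem:mbds} has to be combined with a bound on $|\rho_t'|$ (of order $1/t$) and a careful choice of the imaginary part $\eta = \xi^3/N$ to translate into a density estimate. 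The logic is recoverable, but as written it skips the quantitative step that actually produces the $N^{-c}$ rate.
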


The proof of Proposition \ref{p:uniHt} is a modification of \cite[Section 3]{Sparse}.  We do not provide all of the details.

We first require a minor modification of the main result of \cite[Theorem 2.5]{DBM}, which states that  for any deterministic matrix $A$ the local eigenvalue statistics of $A + \sqrt{t} GOE$ are universal for $t \gtrsim N^{-1}$, under some mild regularity conditions of its eigenvalues.  To state it more precisely, we introduce some notation. For the deterministic $N\times N$ matrix $A$ we define
\beqn
A_t := A + \thetat G,\qquad \thetat := \sqrt{\frac{1 - e^{-t}}{2} } = O ( t^{1/2} ) ,
\eeqn
where $G$ is a standard GOE matrix.  We denote by $\rho_t$ the free convolution of the empirical eigenvalue distribution of $A$ and the semicircle law with variance $\thetat^2$, and  $m_t$ the Stieltjes transform of $\rho_t$. Then $m_0 (z)$ is just the Stieltjes transform of empirical eigenvalue distribution of $A$. The density $\rho_t$ is analytic on its support for any $t>0$.  The function $m_t$ solves the equation
 \beqn
m_t (z) = m_0 ( z  + \thetat^2 m_t (z) ) = \frac{1}{N} \sum_{i=1}^N \frac{1}{ \lambda_i (A) - z - \thetat^2 m_t (z) },
\eeqn
where $\lambda_1(A)\leq \lambda_2(A)\leq\cdots \leq \lambda_N(A)$ are the eigenvalues of $A$.
We denote the classical eigenvalues of $\rhofc$, $\rhosc$ and $\rho_t$ by $\gamma_i$, $\gamma_{i,sc}$  and $\gamma_{i, t}$ respectively.  They are defined by
\beqn
\int_{-\infty}^{\gamma_i}\rhofc(x)dx=\frac{i-\frac{1}{2}}{N},\qquad
\int_{-\infty}^{\gamma_{i,sc}}\rhosc(x)dx=\frac{i-\frac{1}{2}}{N},\qquad
\int_{-\infty}^{\gamma_{i,t}}\rho_{t}(x)dx=\frac{i-\frac{1}{2}}{N}. 
\eeqn

The following is a minor modification of \cite[Theorem 2.5]{DBM}.

\begin{theorem}  \label{thm:abulk} Fix small parameter $\epsilon > 0$ and large parameter $L>0$. Suppose that there are constants $C_1,C_2 >0$ such that
\beqn
C_1 \leq \mathrm{Im } [ m_{0} (E + \i \eta ) ] \leq C_2
\eeqn
for all $E \in (-L, L)$ and $N^{-1+\epsilon/3} \leq \eta \leq L$.  There is a constant $c_\eps$ so that the following holds.  Suppose that $\|A\| \leq N^{c_\eps}$.  Let $i$ be such that $\lambda_i (A) \in (-L/2, L/2)$ and index $j\in [[\kappa N, (1-\kappa)N]]$ for some $\kappa>0$. Then for $t=N^{-1+\epsilon}$, there exists a small constant $c>0$, which depends on $\epsilon$ and $\kappa$, such that for indices $i_1, ..., i_n \in \nn$ and $i_k \leq N^{c}$,
\begin{align}
\big| &\EE^{(A_t)} [ O (  \rho_t ( \gamma_{i,t} ) N ( \lambda_i - \lambda_{i+i_1} ) , \cdots ,  \rho_t ( \gamma_{i,t} )N ( \lambda_i - \lambda_{i + i_n } ) ) ]\notag \\
 -   &\EE^{(GOE)} [ O ( \rhosc (\gamma_j )N ( \lambda_j - \lambda_{j+i_1} ) , \cdots ,  \rhosc ( \gamma_j )N ( \lambda_j - \lambda_{j + i_n } ) ) ] \big| \leq C N^{-c}.
\end{align}
\end{theorem}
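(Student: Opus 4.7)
The plan is to deduce this from \cite[Theorem 2.5]{DBM}, which gives the same gap universality conclusion but under the stronger hypothesis that $\|A\|$ is uniformly bounded in $N$. The only new feature here is the relaxed norm condition $\|A\| \leq N^{c_\epsilon}$, which is what the Laplacian application demands, since the matrix $\hat{A}_t$ of Lemma \ref{lem:Htdecomp} has norm growing like $\sqrt{\log N}$ due to the Gaussian diagonal $D$.

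First I would reduce to the bounded-norm case by a spectral truncation. Fix $L' := 10 L$ (say) and define $\tilde{A}$ by keeping the spectral decomposition of $A$ on $[-L', L']$ unchanged and replacing each eigenvalue of $A$ outside $[-L', L']$ by $\pm L'$ (matching sign). Then $\|\tilde{A}\| \leq L'$. Writing $\tilde{m}_0$ for the Stieltjes transform of $\tilde{A}$, one checks from the definition that $|\tilde{m}_0(z) - m_0(z)|$ is bounded uniformly on the spectral domain $\{E + \i \eta : |E| \leq L,\ N^{-1 + \epsilon/3} \leq \eta \leq L\}$, and more precisely that $\mathrm{Im}[\tilde{m}_0]$ continues to satisfy the two-sided bound $C_1' \leq \mathrm{Im}[\tilde{m}_0] \leq C_2'$ on $(-L, L)$ with modified constants; indeed, the redistributed eigenvalues sit at $\pm L'$, well outside $[-L, L]$, so their contribution to the imaginary part is harmless.

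Second I would apply the bounded-norm version \cite[Theorem 2.5]{DBM} to the matrix $\tilde{A}_t := \tilde{A} + \thetat G$ to obtain gap universality for $\tilde{A}_t$ at any index $i$ corresponding to a bulk eigenvalue, i.e.\ with $\lambda_i(\tilde{A}) \in (-L/2, L/2)$. Since the eigenvalues of $A$ and $\tilde{A}$ inside $[-L, L]$ coincide and the free convolutions $\rho_t$ and $\tilde{\rho}_t$ differ only outside that window, the classical locations $\gamma_{i,t}$ and $\tilde\gamma_{i,t}$ agree up to $O(N^{-1})$ for the indices we care about, so the statement of Theorem \ref{thm:abulk} for $\tilde{A}_t$ is immediate.

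Finally, and this is the hard step, I would transfer gap universality from $\tilde{A}_t$ to $A_t$. Write $A_t = \tilde{A}_t + (A - \tilde{A})$; the deterministic perturbation $A - \tilde{A}$ is supported on the spectral subspace of $A$ outside $[-L', L']$ and can have operator norm as large as $N^{c_\epsilon}$, so naive resolvent perturbation is useless. Instead I would interpolate from $\tilde{A}_t$ to $A_t$ through a family of rank-one updates, applying Cauchy interlacing at each step: since all of the large eigenvalues of $A_t$ and $\tilde{A}_t$ lie outside $[-2L, 2L]$ with overwhelming probability (by Weyl together with $\|\thetat G\| = O(\sqrt{t})$), no bulk eigenvalue of $\tilde{A}_t$ in $(-L/2, L/2)$ can be displaced by more than one index under each rank-one update, and the rigidity estimates coming from the assumed regularity of $m_0$ (and hence $\tilde{m}_0$) on $(-L, L)$ show that the successive bulk gaps are preserved up to negligible error. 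The main obstacle is making this interlacing control quantitative uniformly in the number of outlying eigenvalues, which I expect to handle by grouping the updates and using the fact that, after any number of such perturbations, the local density near $E \in (-L, L)$ is rigidly determined by $\mathrm{Im}[m_0(E + \i N^{-1+\epsilon/3})]$.
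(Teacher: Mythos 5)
The paper takes a completely different, much shorter route than your spectral-truncation scheme: rather than reducing to the bounded-norm version of \cite[Theorem~2.5]{DBM}, the authors trace through the proof of that theorem and observe that the norm hypothesis on $A$ enters at exactly one point, namely \cite[Lemma~7.17]{DBM}, which is what converts the local law for $A_t$ into rigidity estimates $|\lambda_i(A_t)-\gamma_{i,t}|\leq N^{\delta}/N$. Relaxing the hypothesis to $\|A\|\leq N^{c_\eps}$ only degrades the exponent to $N^{C\delta}/N$ for a universal $C$, and the downstream Dyson Brownian motion analysis in \cite{DBM} tolerates any rigidity error of the form $N^{\delta_2}/N$ with $\delta_2>0$ small. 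No truncation, interpolation, or interlacing appears.

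Your Steps 1 and 2 are plausible as written. The assertion that $\gamma_{i,t}$ and $\tilde\gamma_{i,t}$ agree up to $O(N^{-1})$ does deserve a sentence: the truncation relocates mass to $\pm L'$ and therefore changes $\rho_t$ inside $(-L/2,L/2)$, but since no eigenvalue of $A$ crosses any $E\in(-L/2,L/2)$ when truncated, the counting functions agree on the bulk window, which is what you actually need.

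Step 3 contains the real gap, and it is not that interlacing ``needs to be made quantitative'' --- interlacing is the wrong tool for this comparison. Cauchy interlacing tells you which consecutive pair of unperturbed eigenvalues the new eigenvalue lands between; it gives no control whatsoever on eigenvalue positions at scale $N^{-1}$, which is precisely the scale at which gap statistics are measured. A single rank-one update can move a bulk eigenvalue by a full spacing $\Theta(N^{-1})$, which is already enough to destroy the statistic you want to preserve. Compounding this, under the hypotheses of Theorem~\ref{thm:abulk} (density control only on $(-L,L)$ and the polynomial bound $\|A\|\leq N^{c_\eps}$) the rank of $A-\tilde A$ --- the number of eigenvalues of $A$ outside $[-L',L']$ --- is not bounded and can be a positive fraction of $N$, after which interlacing permits index shifts of that same order. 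Appealing to rigidity does not help: rigidity localizes eigenvalues to a window of size $N^{\delta}/N \gg N^{-1}$, which says nothing about gaps. To compare $A_t$ with $\tilde A_t$ at the gap scale you would need something on the order of second-order eigenvalue perturbation theory together with an isotropic local law for $\tilde A_t$ to show that the contribution of the outlier subspace to a bulk eigenvalue is $o(N^{-1})$; this is a substantial additional argument and far more work than the paper's one-paragraph observation about where the norm hypothesis is actually used in \cite{DBM}.
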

The only difference between the above theorem and that stated in \cite[Theorem 2.5]{DBM} is that the spectrum of the matrix $A$ is required to have a lower bound in \cite[Theorem 2.5]{DBM}.  However, the only place the lower bound is used is to apply \cite[Lemma 7.17]{DBM} in order to conclude \cite[Theorem 3.5]{DBM}; that is, so that the error in the rigidity estimates on the eigenvalues of $A_t$ is smaller than $N^\delta/N$ for any $\delta >0$.  It is clear that if instead the lower bound on $A$ is replaced by the bound $||A|| \leq N^{\delta_1}$ then there is a universal $C >0$ so that the rigidity estimates of \cite[Theorem 3.5]{DBM} hold with an error of $N^{C \delta}/N$.  The proof of the main result of \cite{DBM} on universality only requires that the rigidity estimates hold with an error of $N^{\delta_2} /N$ where $\delta_2 >0$ is some small constant.  Hence Theorem \ref{thm:abulk} holds.

Let $\A$ be the set of $N\times N$ real symmetric matrices such that
\begin{equation} \label{defA}
\A := \{ A : | m_0 (z) -\mfc (z) | \leq \xi\Psi) , z \in \D_L\} \cap \{ \|A\| \leq \xi\} ,
\end{equation}
where the $\nu >0$ in the definition of $\xi$ is taken small enough.


\begin{lemma} \label{lem:mbds}
Let $A \in \A$.  We have for $t \leq q/N$, (in fact we only need that $t=o(1)$)
\beqn
| m_t (z) - \mfc (z) | \leq C (\xi\Psi+t^{1/2})
\eeqn
uniformly for 
\begin{align}
z\in &\{ z =E+i\eta : |E| \leq 2\xi+L-1, 2 \leq \eta \leq L-1 \}, \notag \\
\cup &\{ z = E + \i \eta : |E| \leq L-1, \xi^3N^{-1} \leq \eta \leq 2 \}, \notag \\
\cup &\{ z = E + \i \eta : 2\xi+1 \leq |E| \leq 2\xi + L -1,  \xi^3N^{-1} \leq \eta \leq 2 \}.
\end{align}
\end{lemma}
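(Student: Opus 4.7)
The plan is to exploit the self-consistent equation
\[
m_t(z) = m_0\bigl(z + \thetat^2 m_t(z)\bigr),
\]
which is the defining property of the free convolution with a semicircle of variance $\thetat^2$, together with the hypothesis $A \in \A$ which provides $|m_0(w) - \mfc(w)| \leq \xi \Psi(w)$ uniformly on $\D_L$, and the Lipschitz bound $|\mfc'| \leq C$ from Lemma \ref{l:reg}. This is conceptually close to Lemma 3.6 of \cite{LSSYdeformWB}.

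First, I would establish by a continuity/bootstrap argument in $\eta = \Im[z]$ the a priori bound $|m_t(z) - \mfc(z)| \leq 1$ uniformly on the shrunk spectral domain. At the initial scale $\eta = L - 1$ this is trivial since $|m_t(z)| \leq 1/\eta$ and $|\mfc(z)| \leq 1$. Using that $m_t$ is Lipschitz in $z$ with a constant bounded by a fixed power of $N$ (which follows from the integral representation and the compact support of $\rho_t$, since $\rho_t$ is a free convolution of $\rho_0$ with a semicircle of variance $\thetat^2 = O(t)$ and $\|A\| \leq \xi$ on $\A$), one propagates this bound down through a dyadic sequence of scales.

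Second, assuming $|m_t(z)| \leq 2$, the shifted argument $\zeta := z + \thetat^2 m_t(z)$ satisfies $|\zeta - z| \leq 2\thetat^2 = O(t) = O(q/N) = o(1)$, and $\Im \zeta \geq \Im z$ since $\Im m_t(z) \geq 0$. Hence for $z$ in the domain stated in the lemma---which is obtained from $\D_L$ by shrinking by $1$ on every side---the shifted point $\zeta$ lies in $\D_L$ and one has the monotonicity $\Psi(\zeta) \leq \Psi(z)$.

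Third, applying the local law hypothesis at $\zeta \in \D_L$ and expanding $\mfc$ around $z$ by Lemma \ref{l:reg},
\[
m_t(z) = m_0(\zeta) = \mfc(\zeta) + O(\xi \Psi(\zeta)) = \mfc(z) + O\bigl(\xi \Psi(z) + t\bigr),
\]
where I used $|\mfc(\zeta) - \mfc(z)| \leq C|\zeta - z| = O(t)$. Since $t \leq t^{1/2}$ for $t \leq 1$, this yields the claimed bound. The improved estimate $O(\xi\Psi + t) \ll 1$ on the shrunk domain is exactly what is needed to sustain the continuity argument from the first step.

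The main obstacle is the a priori bound $|m_t(z)| = O(1)$: without it, one cannot guarantee that the shifted argument $\zeta$ remains in $\D_L$, which is essential for invoking the hypothesis $A \in \A$. The bootstrap is self-consistent because on any $\eta$-interval where even a crude a priori bound holds, the local law converts it into the much stronger estimate $|m_t - \mfc| \ll 1$, which in turn sustains the a priori bound down to the optimal scale $\eta \gtrsim \xi^3 / N$.
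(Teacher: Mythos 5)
Your argument is correct and shares the same skeleton as the paper's proof: expand the self-consistent equation $m_t(z)=m_0(z+\thetat^2 m_t(z))$, use the hypothesis $A\in\A$ to replace $m_0$ by $\mfc$ plus $O(\xi\Psi)$ at the shifted point, and then use the Lipschitz bound on $\mfc$ from Lemma \ref{l:reg} to move the argument back to $z$. The one genuine divergence is how the shift $\thetat^2 m_t(z)$ is controlled a priori. The paper invokes directly the deterministic bound $|\thetat^2 m_t|\leq\thetat=O(t^{1/2})$, which follows from Cauchy--Schwarz and the self-consistent equation without any induction: writing $\zeta=z+\thetat^2 m_t(z)$ one has
\[
|m_t(z)|^2=|m_0(\zeta)|^2\leq\frac{\Im[m_0(\zeta)]}{\Im[\zeta]}\leq\frac{\Im[m_t(z)]}{\thetat^2\Im[m_t(z)]}=\frac{1}{\thetat^2},
\]
giving $|m_t|\leq 1/\thetat$. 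You instead run a continuity/bootstrap argument in $\eta$ to establish $|m_t|=O(1)$, which gives the sharper control $|\zeta-z|=O(t)$ (hence error $O(\xi\Psi+t)$ rather than $O(\xi\Psi+t^{1/2})$), at the cost of having to verify the polynomial Lipschitz continuity of $m_t$ and run the grid argument. Both routes land safely inside $\D_L$ because the domain in the lemma is shrunk by a margin of $1$, and both use $\Im[\zeta]\geq\Im[z]$ to get $\Psi(\zeta)\leq\Psi(z)$; the paper's direct bound is shorter and avoids the bootstrap overhead, while yours trades that for a marginally better error term that is not needed for the application.
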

The proof is identical to  \cite[Lemma 3.3]{Sparse}. We first derive that $|\thetat^2 m_t|\leq \thetat=O(t^{1/2})$, and then use the definition of the set $\A$ to get
\beqn
m_t(z)=m_0(z+\thetat^2m_t(z))=\mfc(z+\thetat^2m_t(z))+O(\xi\Psi)=\mfc(z)+O(\xi\Psi+t^{1/2}),
\eeqn
where we have used that $\mfc$ is Lipschitz from Lemma \ref{l:reg}.

We now need the following analogue of \cite[Lemma 3.4]{Sparse}.

\begin{lemma} \label{lem:determbds}
For any $t\leq \min\{q/N,N^{-2/3}\}$, any real symmetric matrix $A\in \A$ and $E\in (-L/2, L/2)$ we have
\begin{align}
\label{detE}\left| \rho_t ( E) - \rhofc ( E ) \right| \leq C \xi^3/(Nt).
\end{align}
Moreover for any index $i$ such that $\lambda_i(A)\in (-L/2, L/2)$, we have 
\begin{align}\label{det_rho}
\left| \rho_t ( \gamma_{i,t} ) - \rhofc ( \gamma_i ) \right| \leq C \xi^3/(Nt).
\end{align}
\end{lemma}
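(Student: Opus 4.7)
The proof of \eqref{detE} will proceed by Stieltjes inversion, using Lemma \ref{lem:mbds} together with a Lipschitz bound on the densities. Concretely, for a parameter $\eta_0 > 0$ to be chosen, the triangle inequality yields
\beqn
\pi|\rho_t(E) - \rhofc(E)| \leq |\Im(m_t - \mfc)(E+i\eta_0)| + |\Im m_t(E+i\eta_0) - \pi\rho_t(E)| + |\Im \mfc(E+i\eta_0) - \pi\rhofc(E)|.
\eeqn
The first term is bounded directly by Lemma \ref{lem:mbds} by $C(\xi\Psi(E+i\eta_0) + t^{1/2})$. The remaining two are Poisson-kernel approximation errors, and for a density with a bounded Lipschitz constant near $E$ the standard estimate gives a bound of $C\eta_0(1 + |\log\eta_0|)$.

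The key preliminary is therefore a Lipschitz bound on $\rho_t$ on a neighborhood of $(-L/2,L/2)$, with constant uniform in $t$; the analogous bound for $\rhofc$ is immediate from its analyticity. Since $\rho_t = \rho_{\mathrm{sc}, \thetat} * \mu_A$ and $\thetat^2 = O(t)$, the plan is to use the local law $|m_0 - \mfc| \leq \xi\Psi$ (which holds because $A \in \A$) to approximate $\mu_A$ by $\rhofc$ at scales of order $\thetat \sim t^{1/2}$, and then to deduce that the $\thetat$-scale smoothing inherits a bounded Lipschitz constant from $\rhofc$. Once this is in hand, we choose $\eta_0 = \xi^{2/3}/N^{1/3}$, which nearly balances $\eta_0$ against $\xi/(N\eta_0)^{1/2}$, and obtain
\beqn
|\rho_t(E) - \rhofc(E)| \leq C\left(\xi^{2/3}/N^{1/3} + \xi/q + t^{1/2}\right),
\eeqn
up to a logarithmic factor that can be absorbed into $\xi$. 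The hypotheses $t \leq q/N$ and $t \leq N^{-2/3}$ are precisely what is needed to bound each term on the right by $C\xi^3/(Nt)$, proving \eqref{detE}.

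For \eqref{det_rho}, the plan is to first show $|\gamma_{i,t} - \gamma_i| \leq C\xi^3/(Nt)$ and then conclude via the Lipschitz bound on $\rho_t$. From the defining relations $\int_{-\infty}^{\gamma_{i,t}} \rho_t = \int_{-\infty}^{\gamma_i} \rhofc = (i - 1/2)/N$,
\beqn
\int_{\gamma_i}^{\gamma_{i,t}} \rho_t(x)\, dx = \int_{-\infty}^{\gamma_i}(\rhofc - \rho_t)(x)\, dx.
\eeqn
The right-hand side is controlled by splitting the integration into $(-L/2, L/2) \cap (-\infty, \gamma_i)$, where \eqref{detE} yields a total contribution of order $\xi^3/(Nt)$, and the tail $(-\infty, -L/2]$, which is negligible after choosing $L$ large enough by Gaussian decay of $\rhofc$ and rigidity for $\rho_t$ (using the local law on $A$). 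Since $\rho_t$ is bounded below on a neighborhood of $\gamma_i \in (-L/2, L/2)$ (from $\rhofc \geq c > 0$ in the bulk together with \eqref{detE}), the left-hand side is at least $c|\gamma_{i,t} - \gamma_i|$, giving the desired estimate on $|\gamma_{i,t} - \gamma_i|$. The triangle inequality $|\rho_t(\gamma_{i,t}) - \rhofc(\gamma_i)| \leq |\rho_t(\gamma_{i,t}) - \rho_t(\gamma_i)| + |\rho_t(\gamma_i) - \rhofc(\gamma_i)|$ combined with the Lipschitz bound on $\rho_t$ and \eqref{detE} then concludes. The main obstacle will be establishing the Lipschitz bound on $\rho_t$ with a constant uniform in $t$; this requires genuine use of the local law on $A$ at scales between $\xi^3/N$ and $\thetat$, beyond what is packaged in Lemma \ref{lem:mbds}.
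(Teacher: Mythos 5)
The central assumption in your plan — that $\rho_t$ admits a Lipschitz bound uniform in $t$ — is false, and the paper does not use one. The measure $\rho_t$ is the free convolution of the empirical spectral distribution of $A$ (a sum of $N$ point masses) with a semicircle of variance $\thetat^2 \sim t$; as $t \downarrow 0$ it develops spikes near the eigenvalues of $A$, and its derivative blows up. The bound the paper actually uses is $|\rho_t'(E)| \leq C/t$, which comes from the fact that $m_t(z) = m_0(z+\thetat^2 m_t(z))$ extends analytically to a strip of width $\sim \thetat^2$ on which $|m_t|\leq C$ (via the local law for $A\in\A$), followed by a Cauchy estimate at radius $\sim\thetat^2$. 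Your idea of transporting the $O(1)$ Lipschitz constant from $\rhofc$ through $|m_0-\mfc|\leq\xi\Psi$ cannot succeed: the local law controls $m_0$ but not $m_0'$, and applying a Cauchy estimate at scale $\eta$ turns the error $\xi/\sqrt{N\eta}$ into $\xi/(\eta\sqrt{N\eta})$, which is $O(1)$ only when $\eta\gtrsim\xi^{2/3}N^{-1/3}$; the relevant scale $\Im\zeta \sim\thetat^2\sim t$ lies far below this whenever $t\leq N^{-2/3}$.

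This error propagates. For \eqref{detE}, with the true Lipschitz constant $\sim 1/t$, the Poisson-kernel error $|\Im m_t(E+\i\eta_0)/\pi - \rho_t(E)|$ is of order $(\eta_0/t)\log(1/\eta_0)$; your choice $\eta_0=\xi^{2/3}N^{-1/3}$ makes this $\sim \xi^{2/3}N^{2/3}\log N/(Nt)$, which exceeds the target $\xi^3/(Nt)$ by a factor $\sim N^{2/3}\xi^{-7/3}\log N$. The paper takes $\eta_0=\xi^3/N$, giving Poisson error $\sim\xi^3\log N/(Nt)$ at the acceptable cost of a worse local-law term $\xi\Psi(\eta_0)\sim \xi/q + \xi^{-1/2}$. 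For \eqref{det_rho} the gap is even clearer: your triangle inequality passes through $\rho_t(\gamma_i)$, forcing you to multiply $|\gamma_{i,t}-\gamma_i|$ by the Lipschitz constant of $\rho_t$, which with the true $1/t$ bound gives $|\rho_t(\gamma_{i,t})-\rho_t(\gamma_i)|\lesssim |\gamma_{i,t}-\gamma_i|/t$, hopeless for the target. The paper instead inserts $\rhofc(\gamma_{i,t})$: it bounds $|\rho_t(\gamma_{i,t})-\rhofc(\gamma_{i,t})|$ by \eqref{detE}, and bounds $|\rhofc(\gamma_{i,t})-\rhofc(\gamma_i)|$ using the $O(1)$ Lipschitz constant of $\rhofc$ (Lemma \ref{l:reg}) together with the rigidity estimate $|\gamma_{i,t}-\gamma_i|\leq C(\xi^2 q^{-1}+\xi t^{1/2})$ obtained by the argument of Corollary \ref{cor:rig}, not by integrating \eqref{detE}.
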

\begin{proof} The proof is similar to that of \cite[Lemma 3.4]{Sparse}.  We have the bounds
\beqn
\rho_t(E) \leq C,\qquad  | \rho_t'(E) | \leq C/t, 
\eeqn
uniformly for $E\in [-L,L]$. For \eqref{detE}, we can write the difference as
\begin{align}
|\rho_t ( E) - \rhofc ( E )|
\leq& |\rho_t(E)-\Im [m_t(E+\i\eta)]/\pi| \notag \\
+&|\Im [m_t(E+\i\eta)]/\pi-\Im [\mfc(E+\i\eta)/\pi]|+|\Im [\mfc(E+\i\eta)]/\pi-\rhofc(E)|.
\end{align}
The statement follows by a similar argument as in \cite[Lemma 3.4]{Sparse} and taking $\eta=\xi^3/N$.

The proof of \eqref{det_rho} is analogous to the proof of \cite[Lemma 3.4]{Sparse}, and the main ingredient is the rigidity estimate
\beqn
|\gamma_{i, t} - \gamma_i |\leq C(\xi^2q^{-1}+\xi t^{1/2}),
\eeqn
which follows the same argument as the proof of Corollary \ref{cor:rig} and the definition of $\cal A$. To prove \eqref{det_rho} we have
\beqn
\left| \rho_t ( \gamma_{i,t} ) - \rhofc ( \gamma_i ) \right| 
\leq \left| \rho_t ( \gamma_{i,t} ) - \rhofc ( \gamma_{i,t} ) \right|+ \left| \rhofc ( \gamma_{i, t} ) - \rhofc ( \gamma_{i} ) \right|
\leq C \xi^3/(Nt),
\eeqn
given that $t\leq \min\{q/N, N^{-2/3}\}$, where we have used that $\rhofc$ is Lipschitz.
\end{proof}

\begin{proof}[Proof of Proposition \ref{p:uniHt}] By Lemma \ref{lem:Htdecomp} we need to prove universality for the matrix $\hatH_t$ defined there.  We have that
\beqn
\hatH_t = \hatA_t + ( 1 - \e^{-t} )^{1/2} GOE.
\eeqn
By Lemma \ref{lem:Htdecomp} we have that $\hatA_t \in \A$ with overwhelming probability given that $t\leq \xi/q$.   With this and Lemma \ref{lem:determbds} established, the remainder of the proof is identical to the proof of \cite[Theorem 3.1]{Sparse}.  
\end{proof}

\begin{proof}[Proof of Theorem \ref{t:uni}]
Theorem \ref{t:uni} follows from combining Proposition \ref{p:short_t_comp}, which states that the local statistics of eigenvalues of $H$ and $H_t$ are the same for $t=N^{-1+\epsilon}$ with $\epsilon\leq \beta/2$, and Proposition \ref{p:uniHt}, which states that the local statistics of eigenvalues of $H_t$ for $t=N^{-1+\epsilon}$ are the same as those of Gaussian orthogonal ensembles. Therefore the eigenvalue statistics of $H$ agree with those of Gaussian orthogonal ensembles. 
\end{proof}

\bibliography{mybib}{}
\bibliographystyle{abbrv}

\end{document}